\documentclass[11pt]{amsart}

\usepackage{amssymb, amsmath}
\usepackage{diagbox}

\usepackage{tikz, tikz-cd}
\usepackage{enumitem}
\usepackage{mathtools}
\usepackage{todonotes}
\usepackage{hyperref}
\hypersetup{
    colorlinks=true,
    citecolor=blue,
    linkcolor=blue,
    filecolor=magenta,      
    urlcolor=blue,
}

\usepackage[OT2,T1]{fontenc}
\DeclareSymbolFont{cyrletters}{OT2}{wncyr}{m}{n}
\DeclareMathSymbol{\Sha}{\mathalpha}{cyrletters}{"58}
\theoremstyle{plain}
 \newtheorem{thm}{Theorem}[section]

\newtheorem{lem}[thm]{Lemma}
\newtheorem{prop}[thm]{Proposition}

\theoremstyle{definition}

\theoremstyle{plain}

\newtheorem{proposition}[thm]{Proposition}
\theoremstyle{definition}

\newtheorem{claim}[thm]{Claim}

\newtheorem{defin}[thm]{Definition}

\newtheorem{remark}[thm]{Remark}

\numberwithin{equation}{section}

\newcommand{\sA}{{\mathcal A}}
\newcommand{\sB}{{\mathcal B}}
\newcommand{\sC}{{\mathcal C}}

\newcommand{\sE}{{\mathcal E}}

\newcommand{\sG}{{\mathcal G}}

\newcommand{\sJ}{{\mathcal J}}
\newcommand{\sK}{{\mathcal K}}

\newcommand{\sN}{{\mathcal N}}

\newcommand{\sR}{{\mathcal R}}
\newcommand{\sS}{{\mathcal S}}
\newcommand{\sT}{{\mathcal T}}

\newcommand{\A}{{\mathbb A}}

\newcommand{\Q}{{\mathbb Q}}

\newcommand{\NN}{\ensuremath{\mathbb{N}}}
\newcommand{\hol}{\ensuremath{\mathcal{O}}}

\newenvironment{dedication}
        {\begin{quotation}\begin{center}\begin{em}}
        {\par\end{em}\end{center}\end{quotation}}
        
\newcommand\om{\omega}
\newcommand\la{\lambda}
\newcommand\Lam{\Lambda}
\newcommand\s{\sigma}

\newcommand\e{\epsilon}
\newcommand\al{\alpha}
\newcommand\be{\beta}
\newcommand\Ga{\Gamma}
\newcommand\De{\Delta}
\newcommand\ga{\gamma}
\newcommand\de{\delta}

\DeclareMathOperator{\Pic}{Pic}

\DeclareMathOperator{\Hom}{Hom}

\newcommand{\HH}{\ensuremath{\mathbb{H}}}

\newcommand{\ra}{\ensuremath{\rightarrow}}

\newcommand{\CC}{\mathbb{C}}

\newcommand{\PP}{\mathbb{P}}
\newcommand{\QQ}{\mathbb{Q}}
\newcommand{\RR}{\mathbb{R}}
\newcommand{\ZZ}{\mathbb{Z}}

\newcommand{\Aut}{\mathrm{Aut}}

\newcommand{\I}{\mathrm{I}}

\newcommand{\id}{\mathrm{id}}

\newcommand{\Inn}{\mathrm{Inn}}

\newcommand{\Jac}{\mathrm{Jac}}

\newcommand{\MW}{\mathrm{MW}}
\newcommand{\NS}{\mathrm{NS}}

\newcommand{\rank}{\mathrm{rank\,}}

\newcommand{\tor}{\mathrm{tor}}

\def\gr{\color{green}}

\begin{document}
\title[
 Numerically
trivial automorphisms III]{ 
 Numerically
 trivial automorphisms of  
   elliptic surfaces III: the  non-algebraic case }
\author{Fabrizio Catanese}
\author{Matthias Sch\"utt}
\date{\today}

\address{Lehrstuhl Mathematik VIII, Mathematisches Institut der Universit\"{a}t
Bayreuth, NW II, Universit\"{a}tsstr. 30,
95447 Bayreuth, and Accademia Nazionale dei Lincei, Palazzo Corsini, via della Lungara 10, 00165 Roma, Italia.}
\email{Fabrizio.Catanese@uni-bayreuth.de, Fabrizio.Catanese@gmail.com}


\address{Institut f\"ur Algebraische Geometrie, Leibniz Universit\"at
  Hannover, Welfengarten 1, 30167 Hannover, Germany\\ and\;\;\;\;\;\;\;\;\;\;\;\;\;\;\;\;\;\;\;\;\;\;\;\;\;\;\;\;\;\;\;\;\;\;\;\;\;
  \linebreak
  Riemann Center for Geometry and Physics, Leibniz Universit\"at
  Hannover, Appelstrasse 2, 30167 Hannover, Germany}
\email{schuett@math.uni-hannover.de}


\keywords{Compact K\"ahler manifolds, algebraic surfaces, elliptic surfaces, automorphisms, cohomologically trivial automorphisms, numerically trivial automorphisms, Enriques--Kodaira classfication}

\subjclass[2010]{14J50, 14J80,  14J27, 14H30, 14F99, 32L05, 32M99, 32Q15, 32Q55}

\begin{abstract}
In this third part we  study the group $\Aut_{\QQ}(S)$ of  numerically trivial automorphisms of  a non-algebraic  properly elliptic surface $S$. We treat completely the K\"ahler case.
\end{abstract}
\maketitle

\begin{dedication}
This article is dedicated to Fedya Bogomolov on the occasion of his forthcoming 80th birthday,
with friendship and admiration.
\end{dedication}

\setcounter{tocdepth}{1}

\tableofcontents
\section{Introduction}

This  note extends the results of \cite{CFGLS24} and \cite{CLS24} concerning the group $\Aut_{\QQ}(S)$ 
of the numerically trivial automorphisms  (those which act trivially on rational cohomology) of a complex algebraic elliptic surface $S$ to the more general case
of a  
 non-algebraic elliptic surface $S$.

If $S$ is a non-algebraic compact 
smooth complex surfaces $S$, then, 
by  Kodaira's classification, we know  that there are two cases, distinguished by the algebraic
dimension $alg(S)$ of $S$, which is the transcendence degree over $\CC$ of the field $\CC(S)$
of meromorphic functions on $S$ (see Theorem 3.3 of \cite{BH}).

\begin{enumerate}
\item[(I)] \ $alg(S) =1 \Longleftrightarrow S \  \ {\rm is \ elliptic} $
\item[(0)]  \
$alg(S) =0 \Longleftrightarrow \CC(S) = \CC \Longleftrightarrow  S \  \ {\rm is \ not \ elliptic} .$
\end{enumerate} 

In this paper we shall deal  essentially  with case (I), where there is a natural
 elliptic fibration 
$$f \colon S \ra B$$ 
where $B$ is a compact complex  curve,
 $f$ has connected fibres, and we may  assume that $f$ is relatively minimal.
There is a further distinction (ibidem, table on page 415) according to the 
Kodaira dimension $Kod (S)$:

\begin{enumerate}
\item
$Kod(S) =1 \Leftrightarrow S \  \ {\rm is \ properly \ elliptic} $, this means that  the elliptic fibration is associated to the sections of $H^0(\hol_S(m K_S))$, 
for $m$ sufficiently large.

\item
$Kod (S) =0$, and $S$ is either a torus, or a K3 surface, or a Kodaira surface;
 here $K_S$ is trivial, unless we have  a secondary Kodaira surface, in which case $K_S$ is torsion.

\item
\label{case:Hopf}
$Kod (S) = -\infty \Leftrightarrow$
$S$ is an elliptic Hopf surface.
\end{enumerate} 

Indeed, Kodaira in his  paper   \cite{kodairastructure1}, page 790,  divided minimal complex surfaces into seven classes,
among which the non-algebraic surfaces can only be K3 surfaces, or tori, or
surfaces in the classes $(IV_0), (VI_0), (VII_0)$:

\begin{enumerate}
\item[Class $(IV_0)$] 
consists of  elliptic surfaces with $b_1(S)$ even, $p_g(S)\geq 1, P_{12} (S) \geq 1$;
\item[Class $(VI_0)$]  
consists of elliptic surfaces 
with $b_1(S)$ odd, $P_{12} (S) \geq 1$; 
\item[Class $(VII_0)$] 
consists of surfaces with $ b_1(S)=q (S)= 1, P_{12} (S)=0$.
\end{enumerate}

In  \cite[Theorem 54]{kodairastructure4} Kodaira shows that, if an  elliptic surface is in the class  $(VII_0)$, 
then $S$ is a Hopf surface (case (\ref{case:Hopf}) above).

It was proven by Miyaoka  \cite{miyaoka} that a properly elliptic surface with even first Betti number
$b_1(S)$ (i.e., in the class $(IV_0)$) is K\"ahler, and  Siu proved the same  for all K3 surfaces \cite{siu}.
 
In contrast,  surfaces with odd first Betti number (class $(VI_0)$) are obviously not K\"ahler.

 The group of numerically trivial automorphisms is defined as
\[
\Aut_\QQ(X) = \ker(\Aut(X)
\curvearrowright
H^*(X, \QQ)).
\]
Obviously this contains the connected component of the identity, $\Aut^0(X)$,
 a finite dimensional complex Lie group according to \cite{bm1}, \cite{bm2}.

In fact, as shown by  \cite{Lie78} and \cite{Fuj78}, the quotient  $\Aut_\QQ(X) / \Aut^0(X)$ is a finite group
whenever $X$ is a compact K\"ahler manifold.

For algebraic elliptic surfaces, we studied this quotient group extensively in \cite{CFGLS24}, \cite{CLS24}.

Turning to the non-algebraic case,
 for  K3 surfaces and tori, the group $\Aut_{\QQ}(S)$ resp.\ the quotient $\Aut_{\QQ}(S)/\Aut^0(S)$ 
 is trivial (\cite{PS72}, \cite{br}), hence in the K\"ahler non-algebraic case  (Class $(IV_0)$)
 we are left with dealing   with the case of properly elliptic surfaces with $b_1(S)$ even
(whence we can build largely on \cite{CFGLS24} and \cite{CLS24}).

If instead $b_1(S)$ is odd and $P_{12} (S) >0$, then either $S$ is a Kodaira surface (and has algebraic dimension equal to $1$) or is an elliptic quasi-bundle (see \cite{bpv}, 
section 5  ff.\ of Chapter V for a summary of their properties).

Kodaira surfaces are classified into two types.
A primary Kodaira surface has trivial $K_S$, $b_1(S)=3$, and is an elliptic fibration over an elliptic curve.
A secondary Kodaira surface has a primary Kodaira surface as a finite cyclic unramified covering, has 
$b_1(S)=1$ and $p_g=0$.

We do not attempt here  the full study of the automorphism groups of all the   non-K\"ahler elliptic surfaces.
Instead we only treat the Kodaira surfaces and the  Hopf surfaces,
leaving the elliptic quasi-bundles with odd first Betti number $b_1(S)$  for a future investigation.

%

An important feature is that K\"ahler elliptic surfaces admit algebraic surfaces as deformations, and our 
main result is that the group  $\Aut_{\QQ}(S)$ is constant along deformation families with a
connected base  and with fixed Jacobian surface.

\begin{thm}\label{main}
Let $ f \colon \sS \rightarrow \sB $ be a smooth family with connected base $\sB$ of relatively minimal elliptic K\"ahler surfaces $S_t, t \in \sB$ having the same Jacobian surface $J(S)$.  

If $p_g(S_t)>0, \chi(S_t)>0$, then 
we have equality of the two   groups $ \Aut_{B, \QQ}(S_t) $ and $ \Aut_{ \QQ}(S_t)$, and 
 $  \Aut_{B, \QQ}(S_t) = \Aut_{ \QQ}(S_t)$ is independent of $t \in \sB$.
\end{thm}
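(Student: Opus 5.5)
The plan has two parts. First I show that every numerically trivial automorphism preserves the elliptic fibration and acts trivially on the base, which gives $\Aut_{\QQ}(S_t)=\Aut_{B,\QQ}(S_t)$. Then I identify $\Aut_{B,\QQ}(S_t)$ with a group that depends only on the Jacobian fibration $J(S)\to B$ together with a discrete invariant of the fibration.

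\textbf{Step 1: reduction to the base.} Since $\chi(S_t)>0$ and $S_t$ is K\"ahler with $p_g>0$, I would argue as in \cite{CFGLS24}. If $Kod(S_t)=1$, the fibration is the Iitaka fibration, so it is preserved by every automorphism. If $Kod(S_t)=0$, the surface is a K3 surface, since tori have $\chi=0$ and Kodaira surfaces are not K\"ahler. For K3 surfaces $\Aut_{\QQ}$ is trivial by \cite{PS72}, so there is nothing to prove. Now let $\sigma\in\Aut_{\QQ}(S_t)$. It induces an automorphism $\bar\sigma$ of $B$. Because $\sigma$ acts trivially on $H^*(S,\QQ)$, the holomorphic Lefschetz formula, or alternatively the fact that $\sigma$ fixes the class of every component of every fibre, shows that $\bar\sigma$ fixes every critical value. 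Since $\chi>0$, singular fibres exist.

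To force $\bar\sigma=\id$, I would transcribe the arguments of \cite{CFGLS24} and \cite{CLS24} to the K\"ahler setting. Those arguments use only the topology of the fibration, the canonical bundle formula, and the Hodge decomposition of $H^1$ and $H^2$, all of which remain available for K\"ahler surfaces. In particular, $\sigma$ acts trivially on $H^0(K_S)\subset H^2(S,\CC)$. Because $p_g>0$, this gives trivial action on the pluricanonical images, and these determine the base up to the relevant data. This proves $\Aut_{B,\QQ}=\Aut_{\QQ}$.

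\textbf{Step 2: independence of $t$.} For the fixed Jacobian $J=J(S)\to B$, each $S_t$ is a torsor under the sheaf $\mathcal J^\#$ of local sections of $J$, with class $\eta_t\in H^1(B,\mathcal J^\#)$. The group $\Aut_B(S_t)$ consists of translations by global sections of $J$, extended by automorphisms of $J$ over $B$ that fix $\eta_t$. Numerical triviality is a topological condition. Along a connected smooth family with fixed $J$, the topological type is locally constant, and so are the monodromy representation and the combinatorics of the singular fibres. Hence the condition singles out a subgroup of $\MW(J)\rtimes\Aut_B(J)$ defined purely in terms of the topology of $J$ and of the local invariants of $\eta_t$. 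These local invariants are the multiple-fibre data, which stay locally constant in a smooth family.

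The point to check is that stabilising $\eta_t$ does not vary with $t$. Inside $H^1(B,\mathcal J^\#)$, the family $\eta_t$ moves in a coset of the connected group $H^1(B,\mathcal J)/\text{image}$, i.e. in the continuous part. An automorphism of finite order, or one acting trivially on cohomology, acts on this continuous part either trivially or through a discrete action. Numerical triviality forces it to act trivially on $H^1(B,R^1f_*\ZZ)$. So the stabilising condition involves only the discrete torsion part, which is constant. This yields independence of $t$.

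\textbf{Main obstacle.} The hardest point will be Step 1 for automorphisms acting trivially on rational cohomology but possibly nontrivially on the base, in the non-algebraic case. There the algebraic tools of \cite{CFGLS24}, namely ample divisors and Mordell--Weil lattice arguments for sections, are unavailable, because $S_t$ need not have a section. I would try to get around this by transporting the action to $J(S)$, which is algebraic, and showing that numerical triviality on $S_t$ implies numerical triviality of the induced automorphism of $J(S)$. Then I would invoke the algebraic results for $J(S)$.
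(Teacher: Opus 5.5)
There is a genuine gap, and it sits exactly where the hypotheses $p_g>0$ and K\"ahler must be used. The missing idea is that a numerically trivial automorphism over $B$ acts on the smooth fibres by \emph{translations}. The paper gets this from the K\"ahler hypothesis: $H^0(\Omega^2_{S_t})\subset H^2(S_t,\CC)$ is then fixed pointwise. Locally $\omega=\Psi(t)\,dt\wedge dz$ and $\sigma(t,z)=(t,\alpha z+w(t))$, so $\sigma^*\omega=\alpha\,\omega$, and $p_g>0$ forces $\alpha=1$. Since $\chi>0$ there are no vector fields, so Fujiki gives finiteness; the induced section of $J$ over $B^*$ is therefore torsion and extends by Remmert--Stein. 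Hence $\Aut_{B,\QQ}(S_t)\subset \MW(J)_{\tor}$ (Proposition~\ref{lem: AutQ vs MW}). This group does not depend on $t$, and each element acts on every $S_t$ (Theorem~\ref{J-acts}). These translations vary continuously over the connected base $\sB$, so whether a given one acts trivially on $H^*(S_t,\QQ)$ is independent of $t$.

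Your Step 2 instead tries to show that the stabiliser of $\eta_t$ in $\MW(J)\rtimes\Aut_B(J)$ is constant, and this is false. For example, with no multiple fibres, the fibrewise inversion of $J$ lifts to $S_t$ only if $2\eta_t=0$. In a non-constant family moving in $\Sha^{an}(J)$ this happens only at special (torsion, i.e.\ algebraic) members. So $\Aut_B(S_t)$ genuinely jumps; what saves the theorem is that numerical triviality excludes non-translations, and you never prove that. Your remark about trivial action on $H^1(B,R^1f_*\QQ)$ could be pushed to this conclusion. That group is a graded piece of $H^2(S,\QQ)$ receiving the nonzero $(2,0)$-classes, and a fibrewise action with $\alpha\neq 1$ fixes no nonzero vector in it. But ``the stabilising condition involves only the discrete torsion part'' is not an argument.

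Step 1 is not a proof either. Numerical triviality controls $H^0(K_S)\subset H^2(S,\CC)$, but not $H^0(mK_S)$ for $m\geq 2$, so ``trivial action on the pluricanonical images'' is unfounded. Moreover, $H^0(K_S)$ alone gives nothing when $p_g=1$, e.g.\ for $b=0,\chi=2$ or $b=1,\chi=1$, which are precisely the delicate cases. Your justification that $\bar\sigma$ fixes every critical value also fails: fixing classes of fibre components says nothing about fibres with irreducible support (e.g.\ $\I_1$), whose classes are all multiples of $[F]$.

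The paper (Proposition~\ref{trivialonB}) argues as follows:
\begin{itemize}
\item $b\geq 2$ is excluded by the trivial action on $H^1(B,\QQ)$;
\item for $b=1$, $\bar\sigma$ is a translation, so $\sigma$ has no fixed points and Lefschetz gives $0=e(S)=12\chi(\hol_S)$;
\item for $b=0$, comparing $e(F_0^\sigma)+e(F_\infty^\sigma)=e(S)$ with the Shioda--Tate bound $\rho(J(S))\leq h^{1,1}(S)=10\chi$ leaves only $\chi=2$ with an $\I_{14}^*$ fibre, which Torelli for the K3 surface $J(S)$ rules out.
\end{itemize}

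Your fallback of passing to $J(S)$ would actually work and is shorter. The key input is that $\sigma\in\Aut_\QQ(S)$ implies $J(\sigma)\in\Aut_\QQ(J(S))$. The paper proves this via a Mayer--Vietoris identification $H^*(S,\QQ)\cong H^*(J(S),\QQ)$ and a fibre-by-fibre check. Granting it, \cite[Prop.~5.7]{CLS24} handles $b\geq 1$ or $\chi\geq 3$. For $b=0,\chi=2$ the K3 surface $J(S)$ has $\Aut_\QQ=\{1\}$, so $J(\sigma)$, and with it $\bar\sigma$, is trivial. But that transfer is the substance of the step, and you only announce it.
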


It is to be noticed that if $f \colon S \ra B$ is a minimal elliptic K\"ahler surface which is not algebraic,
then necessarily $p_g(S) \geq 1$, and
moreover   $b : = genus (B)  \geq 1$ in  the case where $\chi(S)=0$.

In the algebraic case,
Enriques surfaces (which have $p_g(S)=0$)
can admit numerically trivial automorphisms (cf.\  \cite{BP83}, \cite{MN84}, \cite{Muk10}) which act nontrivially
on the base,
but there are only very few Enriques surfaces admitting such numerically trivial automorphisms
(and similarly for rational elliptic surfaces).

This leaves  open the question whether 
the independence assertion of 
 Theorem \ref{main} may be valid for $\Aut_{B,\QQ}$ or $\Aut_\QQ$ without the assumption $p_g(S) > 0$.

\medskip

\medskip

As in Part II, \cite{CLS24}, a main tool is the Mordell Weil group of the Jacobian surface $J(S)$
and the automorphisms  which it defines on $S$, 
in particular those which induce the identity on the base $B$ and thus act on a general fibre by a translation.

If you look however in the literature, you can see that Kodaira defined the Jacobian surface $J(S)$
only for elliptic surfaces without multiple fibres. Much later, Friedman and Morgan \cite{FM94}
showed the existence, for any elliptic surface, of a {\bf basic elliptic surface}, which equals
the Jacobian surface $J(S)$ if $S$ has no multiple fibres. We find only natural 
to call it in general the Jacobian surface and to denote it $J(S)$.

In the second section we recall some basic results of Kodaira and of  \cite{FM94},
sketching some arguments which are only highlighted there.

Then in the third section we extend several results of  \cite{CLS24}, especially we give explicit
upper bounds for the size of the group $\Aut_{\QQ}(S)$ 
 (detailed in Theorem \ref{thm:CLS} )
and describe its general structure as follows:

\begin{thm}
\label{thm:CLS'}
Assume that $ f \colon S \ra B$ is a K\"ahler elliptic surface with $p_g(S)>0$
(in particular this assumption  holds if $S$ is not  algebraic);
and assume $\chi(S) > 0$.  Then the following hold:

(1) $\Aut_\QQ(S)$ is isomorphic to a subgroup of $\MW(J(S))_\tor$, and, as such, it is a finite 2-generated abelian group
which can be written in the form $ G = \ZZ/d\ZZ \oplus \ZZ/ da\ZZ$.

(2) $\Aut_\QQ(S)$ is trivial  if there is a fibre of additive type
or if there is no multiple fibre with reducible support.
\end{thm}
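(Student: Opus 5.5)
The plan is to reduce everything to the structure of automorphisms acting trivially on the base, and then use the Jacobian. The argument has four steps.

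\textbf{Step 1: every element of $\Aut_\QQ(S)$ preserves the fibration and acts trivially on $B$.} Since $\chi(S)>0$, the fibration $f$ is canonical: it is the only elliptic fibration when $alg(S)=1$, and in the algebraic case with $Kod(S)=1$ it is given by the pluricanonical map. So any $g\in\Aut(S)$ descends to an automorphism $\bar g$ of $B$. We then show $\bar g=\id_B$ when $g$ acts trivially on $H^*(S,\QQ)$.

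First, $p_g>0$ gives a nonzero $\omega\in H^0(K_S)$, and $g^*\omega=\omega$ because $g$ acts trivially on $H^2(S,\CC)\supset H^{2,0}$. The divisor of $\omega$ is vertical; it is supported on $f^*(K_B+L)$ plus multiple-fibre contributions, with $\deg L=\chi(\hol_S)>0$. Hence $\bar g$ fixes a nonzero divisor of this canonical-bundle-formula type, and more precisely acts trivially on $H^0(K_B+L)$ via $H^0(K_S)\cong H^0(B,K_B+L)$.

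Next, if $b=genus(B)\ge1$, $g$ acts trivially on $H^1(S)\supseteq f^*H^1(B)$. For $b\ge2$ this forces $\bar g=\id$. For $b=1$ it forces $\bar g$ to be a translation, and a translation fixing the effective divisors of a linear system of positive degree $\ge\chi>0$ (pointwise on the space of sections) must be trivial, up to finitely many cases that we exclude using the number of singular fibres, i.e. $e(S)=12\chi>0$.

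For $b=0$ we argue as in \cite{CFGLS24,CLS24}. $\bar g$ fixes every point that is the image of a singular or multiple fibre, because $g$ fixes the classes of the fibre components. There are at least three such points unless $12\chi$ is concentrated on few fibres, and those residual cases are handled with $H^0(K_S)$ as above. So $\bar g=\id$, and $\Aut_\QQ(S)=\Aut_{B,\QQ}(S)$.

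\textbf{Step 2: embedding into the Mordell--Weil torsion.} An automorphism over $B$ acts on the smooth fibres, and hence on the generic fibre, a torsor under the generic fibre of $J(S)$. Since $g$ fixes $\omega$, which restricts to a nonzero holomorphic $1$-form on each fibre, the action is a translation. Therefore $g$ is translation by a section of $J(S)$, i.e. an element of $\MW(J(S))$; this is the Friedman--Morgan construction recalled in Section 2. This gives an injective homomorphism $\Aut_{B,\QQ}(S)\to\MW(J(S))$.

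The image is torsion. A non-torsion section would move a given section class (a multisection class of $S$) in $H^2$ via the height pairing, so it would act nontrivially on $H^2(S,\QQ)$. In the non-algebraic case one uses instead that $\MW$ modulo torsion injects into $H^2$ via the Shioda map.

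\textbf{Step 3: conclusion of (1).} The torsion of $\MW$ embeds into the torsion of a smooth fibre, $(\QQ/\ZZ)^2$, by specialization. Hence the image is a finite subgroup of a 2-dimensional torus, of the form $\ZZ/d\oplus\ZZ/da$.

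\textbf{Step 4: proof of (2).} Suppose there is an additive fibre. Torsion translations must act on the component group of the singular fibre, and additive fibres have component groups of order at most $4$ with no room for $\mu$-type translations. More precisely, torsion sections of order prime to $p$ inject into the smooth locus of an additive fibre, which is $\CC\times$(component group). A translation preserving every component class must then be translation by an element of $\CC$, which has no torsion. So $g$ is trivial.

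Suppose instead there is no multiple fibre with reducible support. Each nontrivial torsion translation acting trivially on $H^2$ must fix every fibre component, so it lies in the identity component $\CC^*$ or $E$ at each singular fibre. One then shows it acts nontrivially on cohomology unless it permutes components of some multiple fibre ${}_mI_n$, $n>0$, compatibly. I expect this last case analysis, following \cite{CLS24} and with the Kähler rather than projective setting (using Lefschetz fixed-point counts $e(S^g)=e(S)>0$), to be the main obstacle. The Lefschetz count gives fixed points lying only in singular fibres, and these must be of type $I_n$ multiple with reducible support.
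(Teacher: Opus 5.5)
Step 3 and the additive-fibre case of Step 4 are fine. There are, however, genuine gaps elsewhere, and they sit exactly where the paper does its real work.

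\textbf{Step 1 for $b=0$.} Your assertion that $\bar g$ fixes every point lying under a singular or multiple fibre is false. Numerical triviality only forces $g$ to preserve fibres with reducible support: a component $C$ has $C^2<0$, so $[g(C)]=[C]$ gives $g(C)=C$. A fibre with irreducible support ($\I_1$, $\mathrm{II}$, $m\I_0$, $m\I_1$, \dots) has class in $\QQ F$ and can be moved to another such fibre. So the count of ``at least three fixed points'' has no basis. Your fallback through $H^0(K_S)\cong H^0(B,K_B+L)$ does settle $b=0$, $\chi\ge 3$: every member of $|\hol_{\PP^1}(\chi-2)|$, e.g.\ $(\chi-2)p$, is $\bar g$-invariant, so $\bar g=\id$. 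But for $b=0$, $\chi=2$, $p_g=1$ the bundle $K_B+L$ is trivial, and $H^0(K_S)$ says nothing about $\bar g$. This is the case the paper has to fight for in Proposition \ref{trivialonB}. Lefschetz at the two fixed points of $\bar g$ gives $12\chi=e(F_0^g)+e(F_\infty^g)$. The rank bound $h^{1,1}\ge\rho(J(S))$ together with Shioda--Tate shows that this much Euler number cannot sit on two fibres, except for an $\I_{14}^*$ fibre on a K3 Jacobian with $\NS(J(S))=U\oplus D_{18}$. That case is finally excluded by Torelli, or by the functorial isomorphism $H^0(\Omega^2_S)\cong H^0(\Omega^2_{J(S)})$. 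For $b=1$ your divisor argument is unnecessary: a translation of $B$ is fixed point free, so Lefschetz gives $0=e(S)=12\chi$.

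\textbf{Steps 2 and 4.} If $S$ is not algebraic, every curve on $S$ is vertical, so there is no multisection class to move. The Shioda map lives on $\NS(J(S))$, and you give no link between $g^*$ on $H^2(S,\QQ)$ and translation by $P$ on $J(S)$. Indeed the functorial $J(g)$ is the identity, since translations act trivially on $\Pic^0$ of the fibres. The paper instead uses that $\chi>0$ excludes holomorphic vector fields, so $\Aut_{B,\QQ}(S)$ is finite by Fujiki \cite{Fuj78}. Hence the section over $B^*$ is torsion and extends across the critical values by Remmert--Stein; you also need this extension before you can land in $\MW(J(S))$.

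For the second half of (2), the Lefschetz mechanism you propose cannot work. A nontrivial torsion translation lying in the identity component $\CC^*$ of a non-multiple $\I_n$ fibre fixes exactly its $n$ nodes, and it is fixed point free on smooth fibres. So in the configuration you want to exclude, $e(S^g)=e(S)$ holds fibre by fibre and yields no contradiction. The missing idea is that a nonzero torsion section $P$ of $J(S)$ must meet a non-identity component of some reducible fibre: otherwise its height would be $2\chi+2(P\cdot O)>0$, whereas torsion has height $0$. Near a non-multiple fibre, $S$ and $J(S)$ are locally isomorphic compatibly with translations, so $g$ would permute components of a reducible fibre of $S$, which is impossible. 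Hence that fibre lies under a multiple fibre of type $m\I_k$ with $k\ge 2$. The paper reaches (2) differently. It deforms $S$ inside $T(S,\{t_j\},\{\xi_j\})$, a torsor under the connected group $\Sha^{an}(J)$ with dense algebraic members, and combines Theorem \ref{main'} with \cite[Thm.~1.1(iii)]{CLS24}.
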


In the  4th section we describe completely the group $\Aut(S)$ of
automorphisms and  the group $\Aut_{\QQ}(S)$ of numerically trivial automorphisms 
for Hopf surfaces and Kodaira surfaces, both in the primary and in the secondary case
(Theorems \ref{thm:Hopf}, \ref{thm:1st}, \ref{thm:2nd}).

Section five discusses some general facts for elliptic bundles and quasi-bundles, 
borrowing from \cite{CFGLS24}.

The final section describes  completely the group $\Aut_{\QQ}(S)$ for elliptic bundels and quasi-bundles which are K\"ahler (equivalently, those with even first Betti number $b_1(S)$):

\begin{thm}
\label{thm:quasi}
If $S \ra B$ is a K\"ahler non-algebraic quasi-bundle with fibre $E$, 
which is a quotient 
 $S = S'/G$ of a principal elliptic bundle $ S' \colon \ra B'$ with fibre $E$, then: 
  
1) $\Aut^0(S) \cong E$

2)  $\Aut^0(S)  < \Aut_{\QQ}(S)$ is the subgroup of automorphisms acting trivially
on the base $B$, and we have equality if the genus $b$ of $B$ is at least $2$.

If  the  base curve $B$ has genus $1$,
 then $G$ is abelian, and acts on the fibre $E$
 via translations. Under this assumption,

3) $ \Aut_{\ZZ}(S) / \Aut^0(S) $ has order at most $2$.

4) $ \Aut_{\QQ}(S) / \Aut^0(S) \cong \sT,$ where  $\sT$ is the maximal group of  translations on $B$
which leave invariant  the monodromy of the covering $B' \ra B = B'/G$  (hence the ones which lift to automorphisms of $B'$).

5) In particular, for the class of surfaces with $b=1$, which have $p_g(S)=1, \chi (S)=0, K^2_S=0, $ there is no 
absolute upper bound for $|\Aut_{\QQ}(S) / \Aut^0(S) |$, but there is an upper bound $|\Aut_{\QQ}(S) / \Aut^0(S) | \leq P_2(S).$ 
\end{thm}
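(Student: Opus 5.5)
We argue directly with the quasi-bundle structure $S = (B'\times E)/G$ or, more generally, $S = S'/G$ with $S'\to B'$ a principal $E$-bundle. Since $S$ is Kähler, $S'$ is Kähler too. A Kähler principal elliptic bundle has trivial (torsion) Chern class, so after passing to a finite étale cover we may assume $S'$ is topologically trivial and flat. Two general facts give the first step. Every automorphism of $S$ preserves the elliptic fibration, since $S$ is non-algebraic with $alg(S)=1$, so the fibration is the algebraic reduction. Hence $\Aut(S)$ maps to $\Aut(B)$, and in fact to automorphisms of the orbifold $B$ with its multiple points.

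For 1) and the first part of 2), the plan is as follows.
\begin{itemize}
\item $\Aut^0(S)$ acts trivially on $B$: the image of a connected group in $\Aut(B)$ is connected, and it must preserve the branch points. When $b\ge 2$ this image is trivial. When $b=1$ we rule out translations lying in $\Aut^0$ by computing $H^0(\Theta_S)=f^*$-pushforward. Concretely, $f_*\Theta_S$ sits in $0\to f_*\Theta_{S|B}\to f_*\Theta_S\to \Theta_B$, and $\Theta_{S|B}\cong f^*(f_*\omega_{S|B})^{-1}$ is trivial for a quasi-bundle. The lifting of vector fields from $B$ is obstructed by non-algebraicity: if $S'=B'\times E$ were a product, it would be algebraic.
\item $\Aut^0(S)$ is the group of fibrewise translations by the fixed $E$ commuting with $G$, which gives $\Aut^0(S)\cong E$.
\item Any automorphism inducing the identity on $B$ acts on each smooth fibre by a translation, by the argument of Theorem \ref{thm:CLS'} via $\MW$. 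It must also act on multiple fibres compatibly, and it is a section of the Jacobian bundle $J(S)=(B\times E)$, which is locally trivial. A section of a trivial-monodromy Jacobian over compact $B$ is constant, so the automorphism lies in $E=\Aut^0$.
\item If $b\ge2$, then any element of $\Aut_\QQ(S)$ acts on $B$ trivially on $H^1(B)$, because $f^*$ is injective on $H^1$. Since $b\ge2$, it is the identity on $B$, which gives equality.
\end{itemize}

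For $b=1$, first show that $G$ is abelian acting by translations. Since $\chi(\hol_S)=0$ and $p_g\ge1$, Hurwitz forces $B'\to B$ to be unramified with $B'$ elliptic, so $G$ is a group of translations of $B'$ and hence abelian. Moreover $G$ acts on $E$ through $\Aut(E)$, and by Kähler plus non-algebraic together with $p_g=1$ this action must preserve the $2$-form $dz\wedge dw$ with $\omega_S$ trivial, which excludes nontrivial linear parts. Then $S$ is a complex torus quotient by a translation group, i.e.\ a (bi-)elliptic-type torus quotient with no multiple fibres.

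For 4), an element of $\Aut_\QQ(S)$ induces an automorphism of $B$ trivial on $H^1(B,\QQ)$, hence a translation $\tau$. The element lifts to $S'$ (equivalently, preserves $S$'s fibration data) precisely when $\tau$ preserves the monodromy $\pi_1(B)\to G$. Conversely, such a $\tau$ lifts to a translation of $S'$ commuting with $G$, and translations act trivially on cohomology. This gives $\Aut_\QQ/\Aut^0\cong\sT$.

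For 3), integral triviality is stronger. A translation on $S$ by a torsion element descends, and its action on $H^2(S,\ZZ)_{\tor}$ or on $H^1(S,\ZZ)$ torsion is detected by a pairing with $G$. We expect at most an index $2$ constraint, coming from $\pm$ symmetry. This step is the main obstacle, and the first approach to try is to compute $H_1(S,\ZZ)=\ZZ^4$-extensions explicitly and see which of the translations in $\sT$ act trivially on the torsion.

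For 5), $\sT$ contains the translations of $B$ by the kernel group of the monodromy. Taking $B'\to B$ cyclic of degree $n$ with $G=\ZZ/n$ acting on $E$ by an $n$-torsion point, $\sT\supseteq B[n]$, which has order up to $n^2$. This shows there is no absolute bound. The bound $|\sT|\le P_2(S)$ comes from the count $P_2=h^0(2K_S)$, with $2K_S$ pulled back from a degree-$0$ divisor on $B$ determined by the monodromy character. We compare $|\sT|$ with $h^0$ via a Riemann–Roch computation on $B$ and on $B'$.
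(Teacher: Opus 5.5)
The decisive error is your claim that for $b=1$ the covering $B'\to B$ is unramified. In the setting of the theorem (Sections 5--6) $B'$ has genus $b'\ge 2$, so by Hurwitz $B'\to B$ is ramified, precisely over the nonempty set $\sB$ of points carrying multiple fibres. The conditions $\chi(\hol_S)=0$ and $p_g\ge1$ do not exclude this: $K_S=f^*(K_B+L)+\sum(m_i-1)F_i'$ with $L$ trivial gives $p_g=1$ whatever the multiple fibres are, and $K_S$ is not trivial. The case $\sB=\emptyset$ must in fact be excluded. Your conclusion would make $S$ a torus, so $\Aut^0(S)=S\not\cong E$, contradicting 1). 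It would also mean every translation of $B$ preserves a monodromy $\pi_1(B)\to G$, so $\sT$ would be all of $B$.

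The rest of your $b=1$ discussion inherits this error.
\begin{itemize}
\item $G$ is not a group of translations of an elliptic $B'$. The paper gets that $G$ acts on $E$ by translations from the triviality of $L$, equivalently of $f_*\Omega^1_{S|B}$ by relative duality.
\item In 4) the monodromy is $\pi_1(B\setminus\sB)\to G$, and $\sT$ is the finite group of translations preserving each $\sB_g$. A lift of $\tau$ to $B'$ is not a translation. One has to check on $\HH\times\CC$ that $\tilde\psi(t,z)=(\tilde\psi_1(t),z+c)$ normalizes $\Gamma$, and then use $H^*(S,\QQ)=H^*(B,\QQ)\otimes H^*(E,\QQ)$ to see it is numerically trivial.
\item In 5) your unramified cyclic examples are tori. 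Moreover, if $2K_S$ were pulled back from a degree-$0$ divisor you would get $P_2\le 1$, contradicting the unboundedness you want. The bound actually comes from the multiple fibres: $2(m_i-1)F_i'\ge m_iF_i'=f^*p_i$ gives $P_2(S)\ge h^0(\hol_B(2L+\sum_{p\in\sB}p))=|\sB|$. Since $\sT$ acts freely on $\sB$, $|\sT|$ divides $|\sB|$.
\item Once $\sB\neq\emptyset$, $\Aut^0(S)$ acts trivially on $B$ simply because translations preserving $\sB$ form a finite group. No vector-field argument is needed.
\end{itemize}

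There are two further gaps. First, in your third bullet, a section of $B\times E\to B$ is a holomorphic map $\Psi\colon B\to E$, and such maps need not be constant (e.g.\ when $E$ is an isogeny factor of $\Jac(B)$). Constancy has to come from numerical triviality, as in the paper. Translation by $\Psi$ acts on $H_1(S)$ through $\Psi_*\colon H_1(B)\to H_1(E)$, so $\sigma\in\Aut_\QQ(S)$ forces $\Psi_*=0$ rationally. By Hodge theory $\Psi^*H^0(\Omega^1_E)=0$, so $\Psi$ is constant. Likewise, the $\MW$-type argument giving fibrewise translations uses $\sigma^*\omega=\omega$, i.e.\ $\sigma\in\Aut_\QQ(S)$, not merely triviality on $B$. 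Second, you leave 3) open. The missing input is Lemma 2.4 of \cite{CFGLS24}, which provides a subgroup of index at most $2$ in $\Aut_\ZZ(S)$ acting trivially on $B$; by 2) its elements lie in $\Aut^0(S)$.
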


This completes the classification of $\Aut_\QQ(S)$ for all K\"ahler elliptic surfaces.

\section{The Jacobian of a complex  elliptic surface.}

Let $f\colon S\rightarrow B$ be an elliptic surface, that is, $S$ is a compact smooth complex surface,
$B$ is a compact smooth complex curve ({\em base} curve),
and $f$ is a surjective holomorphic map with connected fibres of arithmetic genus $1$.

We shall throughout assume that the fibration is relatively minimal, that is, there is no $(-1)$-curve
(a smooth rational curve with self-intersection $(-1)$)  contained in a fibre.

As a general notation, we denote by $S^*$ the open set of $S$ where the derivative of $f$ does not vanish,
that is, the complement of the critical set $Crit(f)$; and by $S^0$ the complement of the set of singular fibres,
i.e., $ S^0 : = f^{-1} (B^*)$, where      the complement of $B^*$ is the set of critical values,
also called the
{\bf branch set } of $f$, $   \sB (f) : = f ( Crit(f) )$.

\subsection{The Jacobian surface in the algebraic case}
As recalled in \cite{CLS24}, if  $S$ is algebraic, one  looks at the function field extension $\CC(S) \supset \CC(B)$
as providing a curve $\sC$ of genus $1$ (the fibre of $f$ over the generic point of $B$)
defined over the non-algebraically closed field $ \sK : = \CC(B)$.

One lets $\Aut_{\sK}(\sC)$ be the subgroup of  automorphisms of $\sC$ which act as the identity
on the base field $ \sK$. By base extension,
these embed into $\Aut_{ \bar{\sK}}(\sE)$, where $\sE : = \sC \otimes_{\sK} \bar{\sK}$
is  an elliptic  curve, $\sE$ acts by translations on itself via the isomorphism 
 $ \sE \cong \Pic^1(\sE) \cong \Pic^0(\sE)$ provided by the choice of a point $0 \in \sE$
 (here $\Pic^d(\sE)$ is, as usual,  the
 set of linear equivalence classes of divisors of degree $d$).
Since the base field is of characteristic zero, we infer 
 \begin{equation}\label{autell}
  \Aut_{\bar{\sK}}( \sE) = \sE \rtimes \mu_r, \quad r \in \{2,4,6\},
  \end{equation}
  where $\mu_r$ is the group of $r$th roots of unity in $\CC$,
  and $r=2$ except for the special cases $r=4$ for the Gaussian = harmonic elliptic curve,
  and $r=6$ for the Fermat = equianharmonic elliptic curve.

Going back to $\sC$, there is no longer 
 (as we were taught by Kodaira \cite{kodaira2-3} and Shafarevich \cite[Chapter VII]{steklov})
  an isomorphism 
 $\sC \cong \Pic^0(\sC)$ as curves over the field $\sK$ if there is no $\sK$-rational point on $\sC$, 
 but the isomorphism $\sC \cong \Pic^1(\sC)$ holds true
  (by Riemann--Roch and by descent).
   
 Then one defines  $\Jac (\sC) : = \Pic^0(\sC)$, and $\sC$ is a principal homogeneous space over $\Jac (\sC)$
 (\cite{steklov}, Chapter 2 of \cite{dolg-cime}); 
 in particular, the Mordell--Weil group, the group of $\sK$-rational points of $\Jac (\sC)$,
 acts on $\sC$.
 This envisages
 the torsion subgroup of the Mordell--Weil group as a major source for the construction of numerically trivial automorphisms.
 
 The geometrical counterpart of $ \Jac (\sC) $  is the Jacobian surface $ J(S)$, 
 a suitable compactification of  the sheaf of groups $\sR^1 f_* \hol_S / \sR^1 f_* \ZZ$ 
 (whose stalks have a connected component of the identity which is either an elliptic curve, or $\CC^*$, or $\CC$:
 in the latter cases one says that the fibre is of multiplicative, resp.\ additive type).
 
The Jacobian surface $ J(S)$ has the property
  that the singular fibres of $J(S)$ are  the same as the singular fibres $F$ of $S$ whenever $F$ is not multiple.
  What happens in the case of multiple fibres is explained by Kodaira as follows.
  
  \subsection{Kodaira's covering trick}  
  
  If we have  multiple fibres, the trick of Kodaira is to take a Galois covering of the base with abelian Galois group $G$, $B' \ra B = B'/G$, such that the normalization $S'$ of the pull back  $ S \times_B B'$ is smooth
  and without multiple fibres.
  
  In fact, over a multiple fibre $F = m F_0$ lies a non-multiple fibre $F'_0$ which is an abelian \'etale
  covering of $F_0$. The Jacobian fibration is locally described as a quotient of $S'$ via the action of a cyclic group 
  of order $m$ acting as the identity on the fibre $F'_0$  if the fibre is smooth.

 \subsection{The Jacobian surface in the general, also non-algebraic case} 
 Kodaira (\cite{kodaira2-3}, see also the survey paper \cite{gnsaga} for a quick summary) constructed the Jacobian fibration $J(f)\colon J(S)\rightarrow B$ for  any elliptic fibration
 $f\colon S\rightarrow B$ without multiple fibres.
 
  Kodaira also  showed that under this assumption $S$ is a deformation of $J(S)$:
 indeed, all the elliptic fibrations
 $f\colon S\rightarrow B$ without multiple fibres and with given Jacobian surface $J = J(S)$ are principal homogeneous spaces  
 over $J= J(S)$ and classified by the cohomology group $H^1 (B,\hol_B( J^* ))$ ($ J^*$ is the open set
 of $J(S)$ where $J(f)$ has nonzero derivative) and $\sG : = \hol_B( J^* )$ is the sheaf of holomorphic sections of $B$ into $J^*$ 
  (see Theorem \ref{tate-shaf}).
 
 \begin{remark}
 Since $J(f)$ admits a section $\Sigma$, it follows that {\bf the Jacobian surface $J(S)$ is algebraic}, 
 since it admits a divisor $D$ with $D^2 > 0$: it suffices to take $ D = \Sigma + m F$,
 with $m \in \NN$ sufficiently large.
 \end{remark}
 
 Kodaira's proof was based on the consideration of the functional  and of 
 the homological invariant of $f$.
 
 Letting  $B^*$ be the set of critical values of $f$ as above, then we have a function $j : B^* \ra \CC$
 such that $j(t)$ is the $j$-invariant of the fibre $F_t : = f^{-1} (t)$, 
 normalized such that the elliptic curve with an automorphism of order 4 has $j$-invariant $1$. 
 Kodaira proved that
 this function extends to a meromorphic function   
 $$j : B \ra \CC \;\; \text{ (or a holomorphic map } \;\; j : B \ra \PP^1(\CC)),
 $$
 called the {\bf functional invariant}. 
 
 Letting $B' \subset  B^*\subset  B$ be the set  of points where $ j (t) \neq 0,1, \infty$,
 then the locally constant sheaf $\sR^1f_* (\ZZ)_{|B'}$ is associated to a representation 
 $\rho : \pi_1 (B') \ra SL(2, \ZZ)$, and compatible with lifts of $j$ to the upper half plane, which is called the {\bf homological invariant}.
 
 Then Kodaira shows that, for each pair of compatible invariants $j, \rho$, there is a unique elliptic surface 
 $J$ with a section
 having  these very invariants.
 
 \subsection{The case with multiple fibres}
 One can define the holomorphic function $ j : B^* \ra \PP^1(\CC)$ for any elliptic surface $S$, 
 and one has a homological invariant 
 $\rho : \pi_1 (B') \ra SL(2, \ZZ)$ where $B' \subset B^*$ is  the set  of points where $ j (t) \neq 0,1$.
 
 Friedman and Morgan {\cite[Theorem 3.14, page 45]{FM94}} extend Kodaira's result as follows:
 
 \begin{thm}
Given an elliptic surface $f \colon S\ra B$,
 $j$ has a holomorphic extension to $j \colon B \ra \PP^1(\CC)$, 
 and $f$ determines a homological invariant $\rho$.
For this choice of a compatible pair 
 of holomorphic and homological invariants $(j,  \rho)$,  there exists a unique 
 elliptic surface $J(f)\colon J(S)\ra B$  
  with a section
 admitting this  pair of invariants.
 \end{thm}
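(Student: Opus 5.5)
The plan is to reduce to Kodaira's theorem for fibrations without multiple fibres, using the covering trick recalled above. There are three things to prove: the extension of $j$, the existence of $\rho$, and existence and uniqueness of $J(S)$.

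\textbf{Extension of $j$ and definition of $\rho$.} This is local on $B$. Near a point $p \in B$, I choose a small disc $\Delta$ around $p$ and a cyclic cover $\Delta' \to \Delta$, $z \mapsto z^m$, where $m$ is the multiplicity of $f^{-1}(p)$. The normalization $S'_\Delta$ of $S\times_\Delta \Delta'$ has no multiple fibre over $0$; after resolving and contracting to a relatively minimal model, it is an elliptic fibration without multiple fibres. Its functional invariant is $j\circ(z\mapsto z^m)$. By Kodaira's theorem this extends holomorphically across $0$, hence so does $j$ itself, by Riemann extension for a function of $z^m$. The homological invariant $\rho$ is defined directly from the local system $\sR^1 f_*\ZZ$ over $B'$, which makes sense whether or not there are multiple fibres. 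Compatibility of $\rho$ with lifts of $j$ to the upper half plane follows from the period map of the smooth fibres over $B'$.

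\textbf{Existence.} Kodaira's construction builds, for any compatible pair $(j,\rho)$, a surface with a section. Over $B'$ one takes the quotient of the line bundle $\sR^1 f_*\hol$ by the lattice $\sR^1 f_*\ZZ$, with the zero section. Near each point of $B\setminus B'$ one glues in Kodaira's local normal forms, which are determined by the local monodromy and the order of $j$. This construction only uses $(j,\rho)$ and not the absence of multiple fibres, so it applies verbatim. Alternatively, one can obtain $J(S)$ locally as the quotient of $J(S'_\Delta)$ by the cyclic Galois group acting via its action on $\Delta'$, followed by resolution. Kodaira's uniqueness then guarantees that these local pieces glue over overlaps, since they agree on $B'$ with the canonical sheaf of groups $\sR^1 f_*\hol_S/\sR^1 f_*\ZZ$. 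Note that the smooth part of a multiple fibre $mF_0$ has Jacobian $\Pic^0(F_0)$, so the local monodromy around such a point is the same as for a non-multiple fibre of the same type.

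\textbf{Uniqueness.} Suppose $J_1$ and $J_2$ are two relatively minimal surfaces with sections and the same $(j,\rho)$. Over $B'$, the section identifies each with the family of Jacobians of its fibres. This gives an isomorphism of the sheaves $\sR^1\hol/\sR^1\ZZ$, since both equal $\hol_{B'}\otimes(\text{line bundle})/\rho$. Thus we get a fibre-preserving bimeromorphic map that respects the sections. Because relatively minimal elliptic fibrations over curves are unique in their bimeromorphic class, this map extends to an isomorphism.

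I expect the main obstacle to be this extension step over the points of $B\setminus B'$, where $j\in\{0,1,\infty\}$ or singular fibres occur. Here one must check that the local data agree there, not just on $B'$. I would handle it by citing the uniqueness of minimal models together with Kodaira's local classification by the conjugacy class of monodromy plus the local behaviour of $j$.
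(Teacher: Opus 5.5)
Your proposal is correct, but its main line differs from the paper's.

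The paper, following Friedman--Morgan, applies Kodaira's covering trick globally. It writes $S=S'/G$ with $S'\to B'$ free of multiple fibres, takes Kodaira's Jacobian $J(S')$, and builds $J(S)$ by gluing local $G$-quotients of $J(S')$. Here $G$ acts trivially on the fibres over smooth multiple fibres, and the local picture is more delicate over fibres of type $m\I_k$, $k\geq 1$.

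You instead use the fact that Kodaira's existence and uniqueness theorem is a statement about abstract compatible pairs $(j,\rho)$. So the only new input needed is that a surface with multiple fibres still produces such a pair. You use the covering trick only locally, to extend $j$ across the multiple fibres, and then apply Kodaira's theorem verbatim. Your ``alternative'' via $J(S'_\Delta)/\mu_m$ is essentially the paper's route.

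Each route buys something:
\begin{itemize}
\item Yours is shorter and avoids the local analysis at non-smooth multiple fibres.
\item The paper's additionally yields the concrete description $J(S)=J(S')/G$ and the local comparison of $S$ and $J(S)$ through logarithmic transformations. This is what is used later: the descent of $T(S')$ in Theorem \ref{J-acts}, Definition \ref{T}, and Theorem \ref{T-family}.
\end{itemize}

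Two minor points:
\begin{itemize}
\item Your uniqueness paragraph can simply be replaced by a citation of Kodaira's uniqueness for pairs. If you argue it directly, the step from an isomorphism over $B'$ to a bimeromorphic map needs a reason. For instance, a section-preserving isomorphism of local Weierstrass models over a punctured disc has the form $(x,y)\mapsto(u^2x,u^3y)$. Then $u$ is meromorphic because $u^4$ or $u^6$ is a quotient of Weierstrass coefficients.
\item The remark that ``the smooth part of $mF_0$ has Jacobian $\Pic^0(F_0)$'' does not by itself determine the local monodromy. One needs, e.g., the \'etale cyclic cover $S'_\Delta\to S_\Delta$ together with the trivial action on $H_1(S_\Delta)$. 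In any case this remark is not needed for applying Kodaira's theorem to the pair $(j,\rho)$.
\end{itemize}
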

 
 A proof is based on Kodaira's trick explained above: after the Galois base change $B' \ra B = B'/G$,
 $S'$ has no multiple fibres and one can take the Jacobian $ J(S')$. Then one can locally glue
 some other $G$-quotient, where $G$ acts as the identity on the fibres lying over the smooth
  multiple fibres of $S$,  whereas for the other multiple fibres the picture is more complicated (cf.\ \cite[p.\ 100]{FM94}).
 
 In Lemma 5.2 of \cite{FM94} the authors give a variant of Kodaira's construction of $J(S)$
 for the case where $S$ has no multiple fibres: there is an open  covering $\{ U_i\}$ of $B$ in the Hausdorff topology,
 and there are local biholomorphisms $\phi_i : S _{|U_i} \cong  J _{|U_i} $ such that,
 on $U_i \cap U_j$, $\phi_i \circ \phi_j^{-1} = : \tau_{ij}$ is given by translation on the fibres; conversely,
 any choice of such $U_i, \phi_i$,  gives  a surface $J \cong J(S)$, well defined up to a translation.

\begin{remark}
Friedman and Morgan call this surface $J(S)$ the basic elliptic surface,
and  create  a different (nonstandard) set of notations, to which we shall not adhere, for the sake of simplicity.
\end{remark}

We take up however their definitions (section 1.5, page 75  and following).

\begin{defin}
Given a Jacobian elliptic surface $J$, that is, an elliptic surface admitting a section, which is taken as the zero section,
then the {\bf Analytic Tate-Shafarevich group}   is defined as 
$$  \Sha ^{an} (J) : =   H^1 (B,\hol_B( J^* ))=  : H^1 (B, \sG) , $$
$\sG : = \hol_B( J^* )$ being  the sheaf of holomorphic sections of $B$ into $J^*$.

It is, by Proposition 5.5 of \cite{FM94},   canonically identified with
 the set of all isomorphism classes of pairs $(S, \psi)$ of an elliptic surface $f \colon S \ra B$
 without multiple fibres, but with some singular fibre, and an isomorphism $\psi : J (S)  \cong J$. 

\end{defin}

The analytic Tate-Shafarevich group contains the algebraic Tate-Shafarevich group, and indeed there is
 an abstract criterion for an elliptic  surface  from $\Sha^{an}(J)$ for being algebraic. 
The second part of the following Theorem  is
due to Kodaira  \cite[Theorem 11.5]{kodaira2-3}, see also pages 86-87, especially Prop. 5.21, of \cite{FM94}.

\begin{thm}\label{tate-shaf}
(1) $  \Sha ^{an} (J) \subset H^2 (J, \hol^*_J)$ is the inverse image of the Kernel $\Lam$ of 
the homomorphism $ H^3(J, \ZZ) \ra H^1(B, \ZZ)$
which is Poincar\'e dual to $H_1(J, \ZZ) \ra H_1(B, \ZZ) $,
under the exact sequence 
$$ H^2 (J, \ZZ) \ra H^2 (J, \hol_J)  \ra H^1 (B, \sG)\ra \Lam \ra 0.$$

Specifically, $\Lam = 0$ if $J$ has some  singular fibre, hence in this case   $H^2 (J, \hol_J)  \twoheadrightarrow H^1 (B, \sG)$ is surjective
and the analytic Tate-Shafarevich group is connected.

(2) Moreover, an element $[S] \in \Sha ^{an} (J) $ is algebraic if and only if it is represented by a point of finite order.
\end{thm}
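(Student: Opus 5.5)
The plan is to derive both parts from the exponential sequence on $J$ and the Leray spectral sequence for $J(f)\colon J\ra B$, following the approach of Kodaira and of \cite[Section 5]{FM94}.

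\textbf{Step 1: an exact sequence for $\sG$.} On $B$ I will use the sequence of sheaves
$$0 \ra \sR^1 J(f)_* \ZZ \ra \sR^1 J(f)_* \hol_J \ra \sG \ra 0.$$
This holds at smooth fibres by construction. At singular fibres I would check it locally: the zero section identifies $\sG$ with the identity component of the fibrewise Picard sheaf restricted to $J^*$.

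\textbf{Step 2: the long exact sequence.} Taking cohomology on $B$ gives
$$H^1(B,\sR^1\ZZ) \ra H^1(B,\sR^1\hol) \ra H^1(B,\sG) \ra H^2(B,\sR^1\ZZ) \ra H^2(B,\sR^1\hol)=0,$$
where the last group vanishes because $B$ is a curve.

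\textbf{Step 3: identifying the terms.} Since the fibration is relatively minimal with a section, $R^1 J(f)_*\hol_J \cong L^{-1}$ and $J(f)_*\hol_J = \hol_B$. The Leray spectral sequence then gives
$$H^2(J,\hol_J) \cong H^1(B,\sR^1\hol) \oplus H^2(B,\hol_B) = H^1(B,\sR^1\hol),$$
which produces the map $H^2(J,\hol_J)\ra H^1(B,\sG)$.

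For the integral terms I will use Leray for $\ZZ$. The terms $E_2^{p,q}$ degenerate in the relevant range because the section splits $H^*(B)\ra H^*(J)$. This identifies $H^2(B,\sR^1\ZZ)$ with the graded piece of $H^3(J,\ZZ)$ complementary to $E_2^{1,2}=H^1(B,\sR^2\ZZ)=H^1(B,\ZZ)$. That piece is exactly the kernel $\Lam$ of $H^3(J,\ZZ)\ra H^1(B,\ZZ)$, where the latter map is the Poincar\'e dual of $f_*$ on $H_1$. Likewise, the image of $H^2(J,\ZZ)$ accounts for $H^1(B,\sR^1\ZZ)$ modulo the $H^2(B)$ and $H^0(B, \sR^2)$ contributions, which map to zero in $H^2(J,\hol_J)$.

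\textbf{Step 4: the containment in $H^2(J,\hol^*_J)$.} Compare with the exponential sequence
$$H^2(J,\ZZ)\ra H^2(J,\hol_J)\ra H^2(J,\hol_J^*)\ra H^3(J,\ZZ).$$
The restriction map $H^2(J,\hol^*_J)\ra H^1(B,\sG)$, obtained from Leray for $\hol^*$ (using $J(f)_*\hol^*=\hol_B^*$ and $H^2(B,\hol_B^*)=0$), shows that $\Sha^{an}$ is the preimage of $\Lam$.

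\textbf{Step 5: the vanishing of $\Lam$.} If some fibre is singular, then $\sR^1\ZZ$ has no nonzero invariants near that fibre. By duality,
$$H^2(B,\sR^1\ZZ)\cong H^0(B,\sR^1\ZZ^\vee)^\vee,$$
up to torsion, and I would argue that the cokernel of the local monodromy is torsion-free and killed. Hence $\Lam=0$, and the Tate--Shafarevich group is the quotient of the vector space $H^2(J,\hol_J)$, so it is connected.

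\textbf{Step 6: part (2).} For a class $\xi\in\Sha^{an}$ with lift $\eta\in H^2(J,\hol_J)$, finite order means $m\eta$ lies in the image of $H^1(B,\sR^1\ZZ)$. I would use the standard construction: $S_{m\xi}$ is obtained from $S_\xi$ by fibrewise multiplication by $m$, which is the Kodaira/Ogg--Shafarevich picture. Then $m\xi=0$ gives a multisection of degree $m$ on $S_\xi$. Adding a large multiple of a fibre gives a divisor of positive square, so $S$ is algebraic.

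Conversely, if $S$ is algebraic, take an ample divisor $H$ and set $m = H\cdot F$. This gives a degree-$m$ multisection, which trivializes $m\xi$.

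\textbf{Main obstacle.} I expect the hardest part to be the degeneration and identification in Step 3 at the integral level, together with the verification of the vanishing of $\Lam$ in Step 5. If I cannot get these directly, I will fall back on citing \cite[Prop.\ 5.21]{FM94} for the torsion-freeness.
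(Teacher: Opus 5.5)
Your exponential-sequence and Leray strategy is the natural route here. The paper itself gives no proof: it quotes the statement, attributing part (2) to Kodaira \cite[Theorem 11.5]{kodaira2-3} and \cite[pp.~86--87, Prop.~5.21]{FM94}. Your part (2) is the standard argument and is fine; fibrewise multiplication by $m$ gives a multisection of degree $m^2$ rather than $m$, which is harmless.

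\emph{Part (1) has a genuine gap, located exactly at the singular fibres.}

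In Step 1, the cokernel of $\sR^1J(f)_*\ZZ\ra\sR^1J(f)_*\hol_J$ is not $\sG$. It is the subsheaf $\sJ=\hol_B(J^{**})$ from the remark after Theorem \ref{J-acts}. On a neighbourhood $X$ of a reducible fibre, the image of $H^1(X,\hol_X)$ in $\Pic(X)$ consists of bundles of degree $0$ on every component. A local section $\sigma$ through a non-identity component instead gives $\hol_X(\sigma-\Sigma)$ of nonzero multidegree, and no vertical divisor corrects this. So the skyscraper $\sG/\sJ$ of component groups must enter.

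In Step 3, the section only kills the differentials into the bottom row. The differential $d_2\colon H^0(B,\sR^2\ZZ)\ra H^2(B,\sR^1\ZZ)$ is in general nonzero, so $H^2(B,\sR^1\ZZ)$ merely surjects onto $\Lam=E_\infty^{2,1}$. Likewise, classes of $H^2(J,\ZZ)$ with nonzero image in $H^0(B,\sR^2\ZZ)$ need not map to zero in $H^2(J,\hol_J)$; only those in $\NS(J)$ do.

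Here is a concrete failure. Take an isotrivial $J\ra\PP^1$ all of whose singular fibres are of type $\I_0^*$, with monodromy $\pm1$. Then $\sR^1\ZZ$ has zero stalks at the singular points, and $H^2(\PP^1,\sR^1\ZZ)$ is the coinvariant group $\ZZ^2/2\ZZ^2$. But $J$ is simply connected, so $\Lam=0$. Your Steps 1--3 would give $\pi_0(\Sha^{an}(J))\cong(\ZZ/2)^2$, contradicting the connectedness you want to prove.

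The correct sequence is $H^2(J,\hol_J)\ra H^1(B,\sJ)\ra H^2(B,\sR^1\ZZ)\ra 0$, followed by $H^1(B,\sG)=H^1(B,\sJ)/\mathrm{im}\,H^0(B,\sG/\sJ)$. The real content of (1) is showing that the image of $H^0(B,\sG/\sJ)$ in $H^2(B,\sR^1\ZZ)$ is exactly $d_2(E_2^{0,2})$. Equivalently, in Step 4 one has $H^2(J,\hol_J^*)=H^1(B,\sR^1J(f)_*\hol^*_J)$, and $\sG$ is the degree-$0$ part of $\sR^1J(f)_*\hol^*_J$ modulo the skyscraper of vertical classes. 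This comparison is missing from your proposal.

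Step 5 fails as well. The local claim is false for $\I_n$ fibres: the invariants have rank one and $\mathrm{coker}(T^n-1)\cong\ZZ\oplus\ZZ/n$. Moreover, a singular fibre forces $\chi(\hol_J)=e(J)/12>0$, hence $q(J)=b$ and $b_1(J)=2b$. So $\Lam\cong\ker(H_1(J,\ZZ)\ra H_1(B,\ZZ))$, the image of $H_1(F,\ZZ)$, is finite from the outset. A duality argument ``up to torsion'' therefore proves nothing; the vanishing of this torsion is a global fact.

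One way to prove it:
\begin{enumerate}
\item Suppose $\Lam\neq0$. Use the splitting $H_1(J,\ZZ)=\Lam\oplus\sigma_*H_1(B,\ZZ)$ given by the zero section to get a surjection $H_1(J,\ZZ)\ra\ZZ/p$ that is onto on $H_1(F,\ZZ)$.
\item The associated \'etale cover $\pi\colon J'\ra J$ has connected fibres over $B$; set $f'=J(f)\circ\pi$.
\item Write $\pi_*\hol_{J'}=\bigoplus_k\mathcal L^{k}$ with $\mathcal L|_F$ nontrivial of order $p$. Then $f'_*\hol_{J'}=\hol_B$ and $\sR^1f'_*\hol_{J'}=\sR^1J(f)_*\hol_J\oplus(\text{torsion})$.
\item Hence $\chi(\hol_{J'})\le\chi(\hol_J)$, contradicting $\chi(\hol_{J'})=p\,\chi(\hol_J)$ and $\chi(\hol_J)>0$.
\end{enumerate}
Without this argument, or the fallback citation to \cite{FM94} that you mention, the connectedness assertion is not proved.
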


The most important result for our purposes is the following.

\begin{thm}[{\cite[pages 45-47]{FM94}}]\label{J-acts}
Let $f\colon S\rightarrow B$ be a (relatively minimal) elliptic surface, and let $J = J(S) $
be its Jacobian surface. 

Then the following hold:

\medskip

(i) The holomorphic map $ T_0 (S) \colon  S^0 \times_{B^*}  J^0 \ra S^0$, such that $ (p,\eta) \mapsto div (p + \eta)$,
extends  to a holomorphic map 
 $ S^* \times_B J^* \ra S^*$, and to a meromorphic map over $B$,
 $$ T(S)  \colon S\times_B J(S)\dashrightarrow S.$$ 

 It  has the property that, for every point $t\in B$ corresponding to a smooth fibre $F_t=f^*t$ of $S$, the induced map from 
 $J_t \times F_t \ra F_t$ is just translation given the group structure on the first factor determined by the choice of the zero section. 

(ii) For any section $\sigma \colon B\ra J$, there is an induced automorphism $T_\sigma\colon S\ra S$ over $B$ such that, on a smooth fibre $F_t$ of $f$, $T_\sigma$ acts by translation of $\sigma(t)\in \Pic^0(F_t)$.

\end{thm}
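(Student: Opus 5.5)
The plan is to build the action of $J=J(S)$ on $S$ locally over $B$ and glue, treating first the part over $B^*$, then the non-multiple singular fibres, then the multiple fibres.

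\emph{Step 1: over $B^*$.} For $t\in B^*$ the fibre $F_t$ is a smooth genus one curve and $J_t=\Pic^0(F_t)$. Over a small disc $U\subset B^*$ we choose a local holomorphic section $s$ of $f$. This identifies $S_{|U}\cong \Pic^1$ of the family, hence $S_{|U}\cong J_{|U}$, and in these coordinates $(p,\eta)\mapsto p+\eta$ is a holomorphic group law. Intrinsically, $(p,\eta)\mapsto$ the unique point $q$ with $\hol_{F_t}(q)\cong \hol_{F_t}(p)\otimes\eta$ does not depend on $s$, so the local maps agree on overlaps and give $T_0(S)$ holomorphically on $S^0\times_{B^*}J^0$. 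Its restriction to $J_t\times F_t$ is translation by construction.

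\emph{Step 2: over $S^*$ near singular non-multiple fibres.} Here I would use the Friedman--Morgan local description (Lemma 5.2 of \cite{FM94} quoted above). Near a non-multiple fibre there exist local biholomorphisms $\phi_i\colon S_{|U_i}\cong J_{|U_i}$, with transition maps given by fibrewise translations. Since $J^*$ is a commutative group scheme over $U_i$ acting on itself by translation (and on $J_{|U_i}$, extending Kodaira's construction), we transport this action through $\phi_i$. Translations commute with translations, so the transported actions glue, and we get a holomorphic map $S^*\times_B J^*\to S^*$ over the non-multiple locus.

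For a multiple fibre $mF_0$ I would use Kodaira's covering trick. After a local cyclic base change $w\mapsto w^m$, the normalized pullback $S'$ has no multiple fibre, and by the previous case $J(S')^*$ acts on $S'$. The Jacobian of $S$ near this point is the $\ZZ/m$-quotient described by Friedman--Morgan. The action of $J(S')^*$ is equivariant for the Galois group, so it descends to an action on $S^*$ of the invariant sections, i.e. of $J^*$. The statement about smooth fibres is clear from the formula. The extension of $T(S)$ to a meromorphic map $S\times_B J\dashrightarrow S$ then follows because the complement of $S^*\times_B J^*$ is a proper analytic subset of codimension at least $1$. Alternatively, the graph closure in $S\times_B J\times_B S$ is analytic, by a Remmert--Stein type argument, since it is a finite-to-one projection over a closed analytic set away from finitely many fibres. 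I would argue via the closure of the graph together with boundedness of the fibres.

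\emph{Step 3: part (ii).} A holomorphic section $\sigma$ lands in $J^*$, because a section meets each fibre at a smooth point of multiplicity one. So $T_\sigma(p):=T(S)(p,\sigma(f(p)))$ is holomorphic on $S^*$, and it is bimeromorphic with inverse $T_{-\sigma}$. Since $S$ is relatively minimal, any bimeromorphic self-map over $B$ is biregular: the minimal model of a fibration is unique over $B$ when there are no $(-1)$-curves in fibres. Hence $T_\sigma$ is an automorphism.

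I expect the main obstacle to be Step 2 at the multiple fibres of non-smooth type. There the $\ZZ/m$ action on $J(S')$ need not be the identity on the central fibre, so one must check carefully that the descended group is exactly $J^*$ and that the action remains holomorphic on $S^*$. I would lean directly on \cite[pp.\ 45--47, 100]{FM94} for this local analysis.
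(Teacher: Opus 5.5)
Your decomposition matches the paper's. You transport the translation action of $J$ on itself through the Friedman--Morgan local biholomorphisms $\phi_i$; it commutes with the fibrewise translations $\tau_{ij}$, so it glues. You treat multiple fibres by Kodaira's covering trick and descent of the $G$-invariant action; the paper uses one global Galois cover $B'\to B$, you work locally, which is harmless. You get (ii) from relative minimality.

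The step that fails as written is the meromorphic extension of $T(S)$ to $S\times_B J$. Your Step 3 genuinely needs it, because you invoke bimeromorphy of $T_\sigma$ across $\mathrm{Crit}(f)$, which contains entire multiple fibres and non-reduced fibre components. Your reasons do not supply it:
\begin{itemize}
\item Being holomorphic off an analytic subset of codimension $1$ does not give a meromorphic extension; think of $z\mapsto[1:e^{1/z}]$ from the punctured disc to $\PP^1$.
\item Remmert--Stein does not apply. Over a multiple fibre $F_t$ one has $F_t\cap S^*=\emptyset$, so the locus in $S\times_B J\times_B S$ where the graph is undefined contains $F_t\times J_t\times F_t$. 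This is $3$-dimensional, the same as the graph. The same happens over non-reduced components of additive fibres.
\item The appeal to ``boundedness of the fibres'' is too vague to check.
\end{itemize}

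The missing idea is the base case, which you only assert (\emph{$J^*$ acts on $J_{|U_i}$, extending Kodaira's construction}). This is where the paper's proof does its real work. $J$ has a section, hence is algebraic, and it is the minimal resolution of a Weierstrass model $W$. The group law on plane cubics is a rational map $W\times_B W\dashrightarrow W$. Relative minimality gives $\mathrm{Bir}(W)=\mathrm{Bir}(J)=\Aut(J)$ over $B$ (cf.\ \cite[Prop.~5.4]{SS19}). Together these give a meromorphic action map on $J\times_B J$, holomorphic on the smooth locus.

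With this in hand you need no extension theorem; meromorphy over $S$ is inherited from your own construction. The $\phi_i$ are biholomorphisms over all of $U_i$, singular fibres included. A $G$-invariant meromorphic map on $S'\times_{B'}J(S')$ descends to the quotient.

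The obstacle you anticipate is also misplaced. $S^*$ contains no point of a multiple fibre, so holomorphy on $S^*$ is vacuous there. What the descent must supply at multiple fibres is exactly this meromorphic extension.
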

\begin{proof}
For (i), as  discussed briefly  on \cite[pages 46--47]{FM94}, we split the proof into two parts.

The first case is where $S$ has no multiple fibres. Then, by the cited Lemma 5.2 of \cite{FM94}, 
it suffices to show the statement for $S = J$ because $T(J)  $ acts by translations on each fibre,
hence commutes with the isomorphisms $\tau_{ij}$ and induces  $T(S)  $.

The statement for $J$ follows by the algebraicity of $J$, and then we use Proposition 5.4, page 81  of \cite{SS19}.

A simple argument is that $J $ is the minimal resolution of the Weierstrass model $W$:
the group structure map $ J^* \times J^* \ra J^*$ is then restricted to the open set
$ J^{**}$, containing, for the reducible fibres, only the  smooth locus of the
fibre component which intersects the $0$-section of $J$
( see \cite{Mir89}, Lecture III, or \cite{SS19}, 5.7).  $ J^{**}$ has also a group structure, and
the group law extends to a rational map $ W \times_B W \dashrightarrow  W$ by the group law on plane cubics with a given point given as origin.

Since $J$ is a relatively minimal model, $ \mathrm{Bir}(W) = \mathrm{Bir}(J) = \Aut(J)$
 where, as usual, we only consider birational maps and automorphisms respecting the elliptic fibration.

In the case where $S$ has multiple fibres, by Kodaira's trick, we see $ S $ as $S = S' / G$,
and   $J(S) = J(S') /G$. Then one sees that, since $T(S')$ is $G$-invariant, it descends to $T(S)$.

 For (ii), we restrict the meromorphic map $J(S)\times_B S\dashrightarrow S$ from (i) to $S\cong \sigma(B)\times_B S$ to obtain a meromorphic map $t_\sigma\colon S\dashrightarrow S$ over $B$. By the relative minimality of $S$ over $B$, $t_\sigma$ extends to an (everywhere defined) holomorphic map $t_\sigma\colon S\rightarrow S$ over $B$, which is evidently biholomorphic and  is as described in (ii).
\end{proof}

 \begin{remark}
The open set $ J^{**}$ is important also in another direction:  one defines (\cite[page 82]{FM94})
$$ \sJ : = \hol_B ( J^{**} )\subset \sG = \hol_B ( J^* ),$$
the subsheaf of sections which, for reducible fibres, pass through the component containing the intersection with the $0$-section.

Then we have an exact sequence 
$$ 0 \ra \sJ  \ra \sG  \ra \sG / \sJ \ra 0 $$
where $ \sG / \sJ $ is a skyscraper sheaf.
\end{remark}

\subsection{Isomorphism classes of elliptic surfaces with  multiple fibres}

We follow again  \cite{FM94}, section 1.6, especially pages 102-103 and 113. 

Kodaira defined the so called {\bf logarithmic transformation}, which is a transformation replacing a smooth fibre $F_{t_0} $
(or a multiplicative fibre, i.e., an $\I_k$-fibre)  by a multiple fibre.

Kodaira takes a disc neighbourhood $U \cong \De$ of the point $t_0$ and takes the pull back $f'$ of $f$ by a ramified covering 
$\varphi : \De \cong U'  \ra \De \cong U$, $\varphi (z) = z^m$, so that $f^{-1} (U) =  (f')^{-1} (U')/G$,
where $G $ is the cyclic group $\mu_m$. 

Then, given a torsion line bundle $\xi$ on $F_{t_0} $, of order $m$, the $G$-action is twisted by the `translation' 
$\tau_{\xi}$ corresponding to $\xi$ on each fibre (that is, $ p \mapsto p + \xi$).

Then the open set $f^{-1} (U) $ in $S$ is replaced by the quotient of $  (f')^{-1} (U')$ by the new action
of $G$, which is now free, and produces a multiple fibre $F'_{t_0} = m F'$ which is the quotient of
$F : = F_{t_0} $ by the cyclic group generated by $\tau_{\xi}$.  The unramified covering 
$$F = \CC/ \Lambda  \ra F' = \CC / \Lambda' $$
is classified again by a torsion subgroup of $\Pic^0 ( F')$,
cyclic of order $m$,  given by $\frac{1}{m} \Lambda /   \Lambda'$.

Taking instead a fibre of type $I_k$, with $k \geq 2$, we get upstairs $k$ $A_{m-1}$ singularities, and,
resolving them, we get an $I_{km}$ fibre $F$, on which we can take an automorphism of order $m$
acting freely, and which extends to the nearby fibres as a translation of order $m$.

Then dividing by the twisted action we get a multiple fibre $F'_{t_0} = m F'$, where again $F \ra F'$
is an unramifed cyclic covering of order $m$, thus determined by a torsion subgroup of $\Pic^0 ( F' )$.

Observe here that we have an exact sequence
\begin{equation}\label{Pic} 1 \ra \CC^* \ra ( \CC^*)^k \ra ( \CC^*)^k \ra \Pic^0 ( F' ) \ra  1,
\end{equation}
whence $\Pic^0 ( F' )  \cong \CC^*$.

\begin{remark}
As the logarithmic transformation creates a multiple fibre, one can also consider the inverse transformation, which
replaces a multiple fibre by a non-multiple one.
\end{remark}

\begin{defin}\label{T}
Given an elliptic surface $ f \colon S \ra B$, and points $t_1, \dots, t_n \in B$
such the fibres $F_{t_j}$ are of type $I_{k_j}$, for $ j = 1, \dots, n$,  let $\xi_j \in \Pic^0( F_{t_j})$
an element of torsion order exactly $m_j$.

Then $T (S, \{t_j\}, \{\xi_j\})$ is defined as  the set of isomorphism classes of elliptic surfaces $f' \colon S' \ra B$ such that 

(1) $J(S) \cong J(S')$
are biholomorphic
via a biholomorphism $\Psi$ such that   $J(f)=  J(f') \circ \Psi$,

(2)  the restrictions of $S'$ and $S$ to $ B \setminus \{t_1, \dots, t_n\}$ are locally (over $B$) biholomorphic
via  biholomorphisms commuting with $f, f'$, 

(3) over some neighbourhood of each point $t_i$ the  surface $S'$  is  obtained 
from $S$  via the generalized logarithmic transformations
associated to the torsion classes $\xi_j$.
\end{defin}

We summarize here Theorems 6.6 and 6.7 of \cite{FM94}:

\begin{thm}\label{T-family}
(I) Let $ f \colon S \ra B$ be an elliptic surface with multiple fibres $F_{t_j}$ of multiplicity $m_j$
over the points $t_1, \dots, t_n \in B$, and let $J : = J(S)$ be the Jacobian surface of $S$.

Then $S \in T (J, \{t_j\}, \{\xi_j\})$, where the classes $\xi_j$ are uniquely determined.

(II) $ T (S, \{t_j\}, \{\xi_j\})$ is a principal homogeneous space over 
$$  \Sha ^{an} (J) =   H^1 (B,\hol_B( J^* ))=   H^1 (B, \sG) . $$

(III) For each $S' \in T (S, \{t_j\}, \{\xi_j\})$,  $\sR^1f_* \hol_S$ and $\sR^1f'_* \hol_{S'}$
are canonically isomorphic.


\end{thm}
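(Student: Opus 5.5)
We argue locally over $B$, using the fibrewise action of $J$ given by Theorem \ref{J-acts}, and then glue with \v{C}ech cocycles.

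\textbf{(I)} Choose disjoint small discs $U_j$ around the points $t_j$. Over $B\setminus\{t_1,\dots,t_n\}$ the surface $S$ has no multiple fibres, so Lemma 5.2 of \cite{FM94} gives local fibre-preserving biholomorphisms to $J$. Over $U_j$ we apply Kodaira's covering trick with $\varphi(z)=z^{m_j}$. The normalized pull back $S'_{U'_j}$ has no multiple fibre, and $S_{U_j}=S'_{U'_j}/\mu_{m_j}$. Here $\mu_{m_j}$ acts freely, lifting the rotation of $U'_j$.

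By Lemma 5.2 of \cite{FM94}, $S'_{U'_j}$ is locally isomorphic to $J(S')_{U'_j}$. Comparing the action of the generator with the untwisted action defining $J(S)_{U_j}=J(S')_{U'_j}/\mu_{m_j}$ gives two actions which differ by a fibrewise translation. The restriction of this translation to the central fibre is a torsion point $\xi_j$ of exact order $m_j$. Hence $S$ is locally a logarithmic transform of $J$, i.e.\ $S\in T(J,\{t_j\},\{\xi_j\})$.

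For uniqueness of $\xi_j$, note that the unramified cover $F\to F'=F_{t_j,\mathrm{red}}$ is intrinsic: it is the restriction of the normalized base change. The cyclic group $\frac1{m_j}\Lambda/\Lambda'$ classifying it therefore determines $\xi_j$. The same holds for $I_k$ fibres, via \eqref{Pic}.

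\textbf{(II)} Let $S_1,S_2\in T(S,\{t_j\},\{\xi_j\})$ and take a fine cover $\{U_i\}$ of $B$ with at most one $t_j$ in each $U_i$. On each $U_i$ choose a fibre-preserving biholomorphism $\phi_i\colon S_1|_{U_i}\to S_2|_{U_i}$ that commutes with the $J$-action of Theorem \ref{J-acts}. Away from the $t_j$ such $\phi_i$ exist by Lemma 5.2 of \cite{FM94}. Near $t_j$ they exist because both surfaces are logarithmic transforms with the same $\xi_j$ of surfaces locally isomorphic to $J$.

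The differences $\phi_i^{-1}\phi_k$ are automorphisms over $U_i\cap U_k$ commuting with the $J$-action. They are therefore fibrewise translations by local holomorphic sections of $J^*$, and we obtain a cocycle in $H^1(B,\sG)$. Changing the $\phi_i$ changes the cocycle by a coboundary.

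Conversely, regluing $S_1|_{U_i}$ via translations by a cocycle produces a new element of $T$. Transitivity and freeness of this action follow from the same bookkeeping.

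The main obstacle is the local statement near a multiple fibre. We must show that the automorphisms of $S|_{U_j}$ over $U_j$ commuting with $J$ are exactly $\Gamma(U_j,\sG)$, computed for $J$ and not for the cover $J(S')$. I would do this upstairs: such an automorphism lifts to a $\mu_{m_j}$-equivariant translation of $S'_{U'_j}$, i.e.\ a $\mu_{m_j}$-invariant section of $J(S')^*$ over $U'_j$. The twist by $\tau_{\xi_j}$ commutes with translations, so invariance is the same as for the untwisted action. Invariant sections are then exactly the sections of $J^*=(J(S')/\mu_{m_j})^*$ over $U_j$.

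\textbf{(III)} $\sR^1f_*\hol_S$ is invariant under fibrewise translations, since a translation is isotopic to the identity on each fibre and so acts trivially on $H^1(F,\hol_F)$. Hence the local isomorphisms $\phi_i$ from (II) induce isomorphisms $\sR^1f_*\hol_{S}|_{U_i}\cong\sR^1f'_*\hol_{S'}|_{U_i}$ that agree on overlaps and are independent of the choices. Near $t_j$ the same holds after passing to $\mu_{m_j}$-invariants on the cover. The isomorphisms therefore glue to a canonical global isomorphism.
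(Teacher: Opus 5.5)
\textbf{Overall.} The paper gives no argument of its own for this statement; it quotes Theorems 6.6 and 6.7 of \cite{FM94}. Your architecture is the Friedman--Morgan one: Kodaira's base change at each $t_j$, identification of the sheaf of automorphisms commuting with the $J$-action with $\sG$ also at the multiple fibres via $\mu_{m_j}$-invariant sections upstairs, \v{C}ech gluing, and triviality of translations on $\sR^1f_*\hol_S$. Your treatment of the local automorphism sheaf in (II) is correct.

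\textbf{The gap in (I).} What does not follow as written is the ``Hence'' in (I). Take a multiple fibre with smooth support and identify $S'_{U'_j}$ with $U'_j\times_{U_j}J$. The deck transformation then reads $g(w,x)=(\zeta w,\,x+\sigma(w))$. All you know is $\sigma(0)=\xi_j$ and the cocycle condition $\sum_{k=0}^{m_j-1}\sigma(\zeta^kw)=0$ forced by $g^{m_j}=\mathrm{id}$. This does not make $\sigma$ an $m_j$-torsion section on the nearby fibres, whereas the logarithmic transformation twists by the torsion translation $\tau_{\xi_j}$ on every fibre. So you must show that $\sigma$ is cohomologous to the torsion section through $\xi_j$. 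This is a short averaging argument:
\begin{itemize}
\item Changing the identification by a translation $t_\tau$ replaces $\sigma$ by $\sigma+\tau-\tau(\zeta\,\cdot)$.
\item For a lift $\tilde\sigma$ to $\CC$, the norm $\lambda=\sum_k\tilde\sigma(\zeta^kw)$ takes values in the period lattice. Hence it is a fixed integral combination of the $\mu_{m_j}$-invariant periods.
\item Therefore $h:=\tilde\sigma-\lambda/m_j$ has zero $\mu_{m_j}$-average, so it contains only monomials $w^n$ with $m_j\nmid n$. It equals $\tilde\tau(\zeta w)-\tilde\tau(w)$ for $\tilde\tau=\sum_{m_j\nmid n}h_nw^n/(\zeta^n-1)$.
\item Evaluating at $w=0$ gives $\lambda/m_j\equiv\xi_j$.
\end{itemize}
You also need this computation in (II), for the existence of the local $\phi_i$ near $t_j$.

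\textbf{Uniqueness.} The same computation supplies the uniqueness argument, which yours does not. The cyclic group $\frac1{m_j}\Lambda/\Lambda'$ only determines the subgroup $\langle\xi_j\rangle$. What pins down $\xi_j$ itself is that $\sigma(0)$ is the translation by which the canonical generator $e^{2\pi i/m_j}$ of the Galois group of $U'_j\to U_j$ (the positive loop around $t_j$) acts on $F$. Moreover, $\sigma(0)$ is unchanged under $\sigma\mapsto\sigma+\tau-\tau(\zeta\,\cdot)$. The identification with $J$ must also be fixed up to translations, since the inversion of $J$ sends $\xi_j$ to $-\xi_j$. So the elements of $T$ are really pairs, which is what your insistence on $J$-equivariant isomorphisms encodes.

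\textbf{The $I_k$ case.} The sentence ``the same holds for $I_k$ fibres, via \eqref{Pic}'' is not an argument. Upstairs the central fibre is of type $I_{km_j}$, and the untwisted $\mu_{m_j}$-action rotates the exceptional curves of the resolved $A_{m_j-1}$ points nontrivially. Freeness forces $\sigma(0)$ off the identity component; otherwise $g$ preserves each rational component and has a fixed point. So both the normalization above and the identification of the invariant with a torsion class in $\Pic^0\cong\CC^*$ have to be worked out separately in that case.
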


\begin{remark}
Concerning algebraicity of an elliptic surface $S$ with multiple fibres,
%
one can apply  Kodaira's  trick and
obtain a surface $S'$ without multiple fibres, such that $S'/G \cong S$.

Since $S$ is algebraic if and only if $S'$ is algebraic, one can apply then Theorem \ref{tate-shaf}.

\end{remark}

\subsection{Elliptic bundles}
These are the elliptic surfaces $f \colon S \ra B$ with $e(S)=0$ and without multiple fibres
because, by the Zeuthen-Segre formula, 
$$ e(S) = 
\sum_{t \in B} e (F_t) \geq 0,$$ 
equality holding if and only if all fibres have smooth support.

Then the period map $ j : B \ra \CC$
 is constant and all the fibres  are isomorphic, and, by the theorems of Grauert and Kuranishi of 
deformation theory \cite{Grauert74}, \cite{Kuranishi62}, \cite{Kuranishi64}, we have a holomorphic bundle.

A detailed treatment of  these elliptic bundles is given in \cite{bpv}, section 5 of Chapter V, pages 143--148,
and we summarize now the results.

If $E$ is an elliptic curve, we have an exact sequence 
$$ 0 \ra E \ra \Aut(E) \ra \mu_r \ra 0, \;\;\;  r \in \{2,4,6\},$$
and a corresponding exact sequence  of sheaves of local holomorphic maps
$$  0 \ra \sE_B  \ra \A(E)_B \ra \mu_r \ra 0, \;\;\; r \in \{2,4,6\},$$
and an exact  cohomology sequence 
$$ H^1(\sE_B)  \ra H^1(\sA(E)_B) \ra H^1(B, \mu_r)$$
where $H^1(\sA(E)_B)$ classifies the elliptic bundles, and $ H^1(\sE_B) $ classifies the {\bf principal bundles}.

Representing $ E = \CC/ \Lam$, there is a further cohomology exact sequence
\begin{equation}\label{pb} H^1(B, \Lam) \ra H^1(B, \hol_B) \ra H^1(\sE_B) \stackrel{c}\ra  H^2(B, \Lam) \ra 0,
\end{equation}
 where $\ker(c)$ is the subgroup of principal bundles whose
cocycle $\xi$ is  locally constant. 
 
 \begin{thm}\label{bundle}
 i) For a principal bundle $S$, $  c (\xi) =0 \Leftrightarrow $ the bundle $S$ is topologically trivial, hence $b_1(S) = b_1(B) + 2$ and $b_2(S) = 2 b_1(B) + 2$.

  ii) For a principal bundle $S$  with  $  c (\xi) \neq 0 $,  $b_1(S) = b_1(B) + 1$ and $b_2(S) = 2 b_1(B)$.

 iii) If $ b = genus(B)=0$, then an elliptic bundle $S$ over $B$  is either a product or a Hopf surface.
 
 iv)  If $ b = genus(B)=1$, then an elliptic bundle $S$ over $B$  falls into the following cases:
 \begin{enumerate}
 \item
 If the bundle $S$ is principal with $c(\xi)=0$, then $S$ is a torus.
 \item
 If the bundle $S$ is principal with $c(\xi)\neq 0$, then $S$ is a primary Kodaira surface.
 \item
 If the bundle $S$ is not principal, then $S$ is a hyperelliptic surface, hence $S$ is projective
 ($S$ is a free quotient $(E_1 \times E)/G$ of a product of elliptic curves).
 \end{enumerate}
 \end{thm}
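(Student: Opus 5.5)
The plan is to derive everything from the Leray spectral sequence of $f$, together with the two exact sequences above and the standard description of $S$ as the quotient of $\widetilde B\times\CC$ by a group of affine transformations.

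\emph{Parts i) and ii).} We have $R^0f_*\ZZ=\ZZ$ and $R^2f_*\ZZ=\ZZ$. Since the structure group of a principal bundle is the connected group $E$, also $R^1f_*\ZZ=\Lam^\vee$ is constant. The Leray spectral sequence $E_2^{p,q}=H^p(B,R^qf_*\ZZ)$ has only one possibly nonzero differential affecting $b_1$ and $b_2$, namely
$$d_2\colon E_2^{0,1}=H^0(B,\Lam^\vee)\cong\ZZ^2\ra E_2^{2,0}=H^2(B,\ZZ)\cong\ZZ .$$
This differential is the transgression. For a principal $E$-bundle it is given by pairing with the characteristic class $c(\xi)\in H^2(B,\Lam)\cong\Lam$ appearing in \eqref{pb}; this is the Euler class of the associated torus bundle, in the same way as for circle bundles.

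\emph{Case $c(\xi)=0$.} Then the transition cocycle can be chosen locally constant, so $S$ is flat with monodromy in the translations. Since $E$ is connected, $S$ is topologically isotopic to $B\times E$, and hence topologically trivial. Conversely, if $S$ is topologically trivial then $d_2=0$, so $c(\xi)=0$. In this case the spectral sequence degenerates, giving $b_1=b_1(B)+2$ and $b_2=1+2b_1(B)+1$.

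\emph{Case $c(\xi)\ne 0$.} Now $d_2\otimes\QQ$ has rank $1$. This cuts $b_1$ by one, to $b_1(B)+1$. It also kills $E_2^{2,0}$ rationally, and dually kills $E_2^{0,2}$ via $d_2\colon E_2^{1,1}\ra E_2^{3,0}=0$ together with Poincaré duality. Hence $b_2=2b_1(B)+2-2=2b_1(B)$. The main point to verify carefully is the identification of $d_2$ with $c(\xi)$. I would do this via the Čech description of the coboundary $H^1(\sE_B)\ra H^2(B,\Lam)$, comparing it with the obstruction to a global section of the bundle over the 2-skeleton.

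\emph{Part iii): $b=0$.} Here $B=\PP^1$ is simply connected and $H^1(\PP^1,\mu_r)=0$, so every elliptic bundle is principal. Moreover $H^1(\PP^1,\hol)=0$, so by \eqref{pb} the class is determined by $c(\xi)\in\Lam$. If $c=0$ the bundle is trivial, i.e.\ a product. If $c\ne0$, then $b_1(S)=1$ and $S$ is a principal elliptic bundle over $\PP^1$. Its universal cover is $\CC^2\setminus\{0\}$: write $\Lam$ so that $c$ is primitive up to index, realise the bundle as a quotient of $\hol(-k)$ minus the zero section by $\ZZ$. This is the standard Hopf surface construction.

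\emph{Part iv): $b=1$.}
\begin{enumerate}
\item Principal with $c=0$: the bundle is flat with translation monodromy, so $S=(\CC\times\CC)/\ZZ^4$ with a lattice of affine translations, which is a torus.
\item Principal with $c\ne0$: by ii) we get $b_1=3$, and $K_S$ is trivial since the bundle is principal over an elliptic curve. This is the definition of a primary Kodaira surface.
\item Not principal: the image in $H^1(B,\mu_r)$ is nonzero, so there is a finite étale cover $B'\ra B$ trivialising the $\mu_r$-part. The pullback is then principal with $c=0$ (up to finite index), i.e.\ a torus $E_1\times E$ after a further isogeny. So $S=(E_1\times E)/G$, with $G$ acting on $E$ by non-translations and on $E_1$ by translations, which is exactly a hyperelliptic surface. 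Its projectivity follows from $b_1=2$, $p_g=0$ and Kodaira's criterion, or directly since it is a quotient of an abelian surface.
\end{enumerate}

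The step I expect to be the main obstacle is the identification of $d_2$ with $c(\xi)$, on which parts i) and ii) rest.
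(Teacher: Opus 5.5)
Your treatment of i)--iii) and iv)(1),(2) is sound. The paper gives no proof of its own: it summarizes \cite{bpv}, V.5, and elsewhere uses the splitting $S\approx\Sigma\times S^1$ (with $\Sigma\to B$ a circle bundle) together with the Gysin sequence. Your identification of $d_2$ with pairing against $c(\xi)$ carries the same information in spectral-sequence form. There is one slip in ii): $d_2\colon E_2^{1,1}\to E_2^{3,0}=0$ kills nothing. What kills $E_2^{0,2}$ is $d_2(xy)=d_2(x)\,y\pm x\,d_2(y)\neq 0$ for generators $x,y$ of $H^1(E)$. More simply, $e(S)=0$ and $b_3=b_1$ force $b_2=2b_1(S)-2=2b_1(B)$.

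The genuine gap is in iv)(3), and it has two parts.
\begin{itemize}
\item You never justify that the pulled-back principal bundle $S'\to B'$ has $c(S')=0$. ``Up to finite index'' cannot supply this: pulling back along an \'etale cover of degree $d$ multiplies $c$ by $d$ in the torsion-free group $H^2(\,\cdot\,,\Lam)\cong\Lam$.
\item More seriously, ``principal with $c=0$, i.e.\ a torus $E_1\times E$ after a further isogeny'' is a false implication. By iv)(1) such a bundle is a torus. But for generic translation monodromy $\pi_1(B')\to E$ it is a non-algebraic torus of algebraic dimension $1$, isogenous to no product.
\end{itemize}
Your argument uses non-principality of $S$ only to produce $B'$. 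It would therefore equally ``prove'' that every torus fibred over an elliptic curve is isogenous to a product. Your projectivity claim (``quotient of an abelian surface'') rests on this step, and the alternative ``$b_1=2$, $p_g=0$'' is only asserted. So the conclusion that $S$ is a projective hyperelliptic surface is not established.

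The missing idea is to use the nontrivial multiplier $\epsilon\neq 1$ with which a generator $g$ of $\mathrm{Gal}(B'/B)$ acts on the fibres.
\begin{itemize}
\item Its lift $\tilde g$ satisfies $\tilde g(x+e)=\tilde g(x)+\epsilon e$, so it identifies $S'$ with $g^*S'$ carrying the $E$-action twisted by $\epsilon$. The translation $g$ acts trivially on $H^2(B',\ZZ)$, so this gives $c(S')=\epsilon\,c(S')$, hence $c(S')=0$.
\item Now write $S'=\CC^2/L$. Then $\tilde g$ lifts to an affine map with linear part $M=\begin{pmatrix}1&0\\ \beta&\epsilon\end{pmatrix}$ preserving $L$.
\item Here $\ker(M-1)=\ell$ is a line and $\mathrm{im}(M-1)=\{0\}\times\CC$, and both are defined over $\QQ$ relative to $L$. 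Hence $L'=(L\cap\ell)\oplus\Lam$ has finite index in $L$ and $\CC^2/L'=E_1\times E$.
\item All linear parts in $\pi_1(S)$ are powers of $M$, so $L'$ is normalized by $\pi_1(S)$. Therefore $S=(E_1\times E)/G$ with $G$ acting freely, by translations on $E_1$.
\end{itemize}
Alternatively, once $c(S')=0$ is known, the same $\epsilon$ makes $R^1f_*\hol_S$ a nontrivial torsion line bundle and leaves no invariants in $H^1(E)$. This gives $p_g(S)=0$ and $b_1(S)=2$, so $S$ is projective, as in the paper's argument that a non-algebraic K\"ahler elliptic surface has $p_g>0$. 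Poincar\'e reducibility on the abelian surface $S'$ then yields the isogeny to $B'\times E$.
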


\section{Non-algebraic  K\"ahler elliptic surfaces}

Let  $ f \colon S \ra B$ be an elliptic surface, relatively minimal.

\smallskip

Then  $K_S^2=0$,
by Kodaira's  canonical bundle formula,

\begin{equation}\label{eq: canonical bundle formula}
K_S = f^*(K_B+L) + \sum_{1\leq i\leq s} (m_i-1) F'_i
\end{equation}
where the divisors $m_iF'_i$, $i=1, \dots, s,$  are   the multiple fibres of $f$, $F'_i$ is an  indivisible effective divisor, and $\deg L =\chi(S)$.

Hence, by Noether's formula,
$$e (S) = 12 \chi (\hol_S) = 12 (1-q (S)+ p_g(S)).$$

By the Zeuthen-Segre formula,
$$ e(S) = \sum_{t\in B} (e(F_t)) \geq 0,$$
equality holding if and only if all fibres are smooth or multiple of a smooth curve.

If $e(S)=0$, one says that $f \colon S \ra B$ is an {\bf elliptic quasi-bundle}.

In particular, it follows that 
$$
K_S^2=0, \;\; e(S) \geq 0, \;\; \chi(S) 
\geq 0.$$
\begin{proposition} Let $ f \colon S \ra B$ be a relatively minimal elliptic fibration.

If $S$ is not algebraic, but K\"ahler, then $p_g : = p_g(S) > 0$.  
\end{proposition}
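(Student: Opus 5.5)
The plan is to argue by contraposition: assume $S$ is compact K\"ahler with $p_g(S)=0$ and show that $S$ is projective. The basic tool is the Hodge decomposition on a compact K\"ahler surface,
$$H^2(S,\CC) = H^{2,0}(S)\oplus H^{1,1}(S)\oplus H^{0,2}(S), \qquad h^{2,0}=h^{0,2}=p_g(S).$$
If $p_g=0$, then $H^{0,2}(S)=H^2(S,\hol_S)=0$. We then use the exponential sequence
$$H^1(S,\hol_S^*) \ra H^2(S,\ZZ)\ra H^2(S,\hol_S)=0,$$
so every integral class is the first Chern class of a holomorphic line bundle, i.e.\ $\NS(S)=H^2(S,\ZZ)$.

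The key step is to produce a line bundle $L$ with $L^2>0$; by the Kodaira (Grauert) criterion this makes $S$ projective. Equivalently, $alg(S)=2$ for a surface with a big line bundle, and Moishezon K\"ahler surfaces are projective. Since $p_g=0$, the real space $H^2(S,\RR)$ equals $H^{1,1}_\RR$, and it contains the K\"ahler class $\omega$, which satisfies $\omega^2>0$. Because $H^2(S,\QQ)$ is dense in $H^2(S,\RR)$ and positivity of the square is an open condition, there is a rational class, hence after scaling an integral class $c$, close to $\omega$ with $c^2>0$. By the previous paragraph $c=c_1(L)$ for some holomorphic line bundle $L$. Riemann--Roch then gives that $h^0(mL)$ or $h^0(-mL)$ grows quadratically in $m$, so $alg(S)=2$, and $S$ is a projective surface. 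This contradicts the hypothesis that $S$ is not algebraic, so $p_g>0$.

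The only step needing care is the passage from $alg(S)=2$ to projectivity in the K\"ahler setting. I would handle it by quoting the standard fact from \cite{BH} (Chapter IV) that a compact surface carrying a line bundle with positive self-intersection is projective. Alternatively, this step can be avoided entirely for the purpose of the proposition: the hypothesis says $S$ is not algebraic, and in the classification recalled in the Introduction the non-algebraic case corresponds to $alg(S)\le 1$, so $alg(S)=2$ already gives the contradiction. Note that the elliptic fibration structure and relative minimality are not actually used here; the statement holds for any compact K\"ahler surface, and this is also the Kodaira criterion that a compact surface with even $b_1$ and $p_g=0$ is projective.
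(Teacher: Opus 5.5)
Your proof is correct. It shares the paper's skeleton: contraposition, $p_g=0\Rightarrow\NS(S)=H^2(S,\ZZ)$ via the exponential sequence, and then a line bundle of positive square forcing algebraicity. Where it differs is in how the class of positive square is produced.

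The paper uses the fibration. The K\"ahler hypothesis serves only to make the fibre class $[F]$ nonzero in $H^2(S,\QQ)$. Unimodularity of the intersection form then yields a divisor $D$ with $D\cdot F\neq 0$, and since $F^2=0$ one gets $(D+mF)^2=D^2+2m\,D\cdot F>0$ for suitable $m\in\ZZ$. You instead approximate the K\"ahler class by rational classes. This ignores the elliptic structure and proves the general fact that every compact K\"ahler surface with $p_g=0$ is projective.

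The paper's route buys reusability. The same argument, with no K\"ahler class available, shows that a non-algebraic surface with $p_g=0$ must have $[F]=0$ in $H^2(S,\QQ)$. That is precisely the opening step of Proposition~\ref{pg=0nonkaehler}, and your route has nothing to say there.

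Your route buys generality, and it also lets you skip the step you flagged. When $p_g=0$ one has $H^2(S,\RR)=H^{1,1}_\RR(S)$, so the K\"ahler cone is open in $H^2(S,\RR)$. A rational class close to $\omega$ is then itself a K\"ahler class, and Kodaira's embedding theorem gives projectivity directly, with no Riemann--Roch.

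One small imprecision: Riemann--Roch literally bounds $h^0(mL)+h^0(K_S-mL)$ from below, so getting quadratic growth of $h^0(\pm mL)$ needs one more line, e.g.\ restricting to a fixed member of some $|K_S-m_0L|$. This is contained in the criterion you cite, so it is not a gap.
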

\begin{proof}
To see this, note that  $p_g=0$ implies
  $\NS(S) \cong H^2(S,\ZZ)$; then, by the K\"ahler assumption,
  the class of a fibre $F$ is nontrivial and, by the unimodularity of the
intersection product, 
 there is a divisor $D$ which is  not orthogonal to a fibre $F$, 
 therefore $ (D + mF)^2 > 0$ for some $m \in \ZZ$, hence $S$ is algebraic.
 \end{proof}

Similarly,

\begin{proposition}\label{pg=0nonkaehler} Let $ f \colon S \ra B$ be a relatively minimal elliptic fibration.

If $S$ is not  K\"ahler, and  $p_g(S) = 0$, then  $f$ is an elliptic quasi-bundle,
$B$ has genus $b=0$, and  $b_1(S)=1, b_2(S)=0, q(S)=1$.
\end{proposition}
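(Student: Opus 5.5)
We use the classification of compact surfaces together with the numerical identities from the beginning of this section.

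\emph{Step 1: $b_1(S)$ is odd.} If $b_1(S)$ were even, $S$ would be K\"ahler, by Miyaoka's theorem for elliptic surfaces (more generally by the Buchdahl--Lamari theorem). Since we assume $S$ is not K\"ahler, $b_1(S)$ is odd. For non-K\"ahler surfaces, Hodge symmetry fails in exactly one degree, so $b_1 = 2q-1$, i.e.\ $q = (b_1+1)/2$. Moreover $b^+ = 2p_g$ for non-K\"ahler surfaces (BPV, Chapter IV, Theorem 2.7). With $p_g = 0$ this gives $b^+ = 0$.

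\emph{Step 2: $e(S) = 0$.} Noether's formula gives
$$e(S) = 12\chi(\hol_S) = 12(1 - q + p_g) = 12(1-q).$$
Since $e(S) \ge 0$ by Zeuthen--Segre, we get $q \le 1$. Since $b_1$ is odd, $b_1 \ge 1$, so $q = (b_1+1)/2 \ge 1$. Hence $q = 1$, $b_1 = 1$, and $e(S) = 0$. So $f$ is an elliptic quasi-bundle. Then
$$b_2 = e - 2 + 2b_1 = 0 - 2 + 2 = 0.$$

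\emph{Step 3: the genus $b$ of $B$ is $0$.} The pullback $f^* \colon H^1(B,\QQ) \ra H^1(S,\QQ)$ is injective, since $f$ is surjective with connected fibres. Hence
$$2b \le b_1(S) = 1,$$
so $b = 0$. This completes the proof.

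The main obstacle is the non-K\"ahler Hodge-theoretic input in Step 1: that $b_1$ odd forces $b_1 = 2q-1$ and $b^+ = 2p_g$. I would quote this directly from BPV, Chapter IV, rather than reprove it. Strictly, only $b_1 = 2q-1$ is needed here; $b^+ = 2p_g$ is not used, since $b_2 = 0$ already follows from $e(S) = 0$. Everything else is bookkeeping with the formulas already recalled above.
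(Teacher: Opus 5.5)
Your proof is correct, and it takes a genuinely different route from the paper's.

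\textbf{Your route.} You get the parity of $b_1$ directly from the hypothesis, using the deep direction of the K\"ahler criterion: even $b_1$ implies K\"ahler (Miyaoka for elliptic surfaces, Buchdahl--Lamari in general). After that everything is arithmetic: $b_1=2q-1$, $e(S)=12(1-q)\ge 0$, $b_2=e(S)-2+2b_1$, and $2b\le b_1(S)$.

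\textbf{The paper's route.} The paper never invokes that criterion; it only uses that $S$ is not algebraic. Since $p_g=0$ gives $\NS(S)\otimes\QQ=H^2(S,\QQ)$, unimodularity forces the fibre class to vanish. Otherwise $(D+mF)^2>0$ for some divisor $D$, and $S$ would be projective. Hence $\NS(S)$ is spanned by fibre components. A reducible fibre would then make the intersection form negative definite and even, contradicting Donaldson's diagonalizability theorem. This gives $b_2=0$. Then $b^+=0$ excludes even $b_1$, which would give $b^+=2p_g+1=1$. Next, $e(S)=2-2b_1\ge 0$ yields $b_1=1$, $e(S)=0$ and $q=1$. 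Finally $b=0$ is read off from $q=b+h^0(\mathcal{R}^1f_*\mathcal{O}_S)$.

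\textbf{What each buys.} The paper trades the K\"ahler criterion for gauge theory and a lattice argument. In return it proves the statement under the a priori weaker hypothesis ``non-algebraic with $p_g=0$'', and obtains $b_1$ odd as an output rather than an input. Your version is shorter and avoids Donaldson's theorem. That theorem is needed here in its later non-simply-connected form, since $b_1(S)=1$ in the end.

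Your Step~3 is also more transparent than the paper's last step. The paper asserts without argument that the degree-zero line bundle $\mathcal{R}^1f_*\mathcal{O}_S$ is trivial. This is automatic over $\PP^1$, but not over an elliptic curve. Your bound $2b\le b_1(S)=1$ rules out $b=1$ directly. It follows from the surjectivity of $\pi_1(S)\to\pi_1(B)$, or from the Leray spectral sequence using $f_*\QQ_S=\QQ_B$.
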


\begin{proof}
By the same token, it follows that, if $S$ is not algebraic, and $p_g=0$,
then the cohomology class of $F$ must be trivial.

Also, every divisor $D$ must be  orthogonal 
to $F$. If $D$ is effective, then $D$ is contained in a union of fibres.
Writing $D =D_+ - D_-$, with $D_+ ,  D_-$ effective, we see that $\NS(S)$ is generated by the fibre components,
hence its rank equals
$$ \rank (\NS(S)) = \sum_{t\in B} (b_2(F_t)-1).$$
If there are singular fibres, which are reducible, then it follows that all the self intersection
form is strictly negative definite, 
and by a theorem of Donaldson \cite{donaldson}, 
it is diagonalizable.

This contradicts that the self intersection of a vertical divisor $D$ is even 
(by the adjunction formula and Kodaira's canonical bundle formula).

The first conclusion is then that the second Betti number $b_2(S)=0$, and all fibres 
are irreducible. 

Then $e(S) = 2 - 2 b_1(S)$: since $b^+=0$, it follows that $b_1(S)$ is odd.

Since $$ e(S) 
\geq 0 \; \Longrightarrow \; b_1(S)=1 \; \Longrightarrow \; e(S)=0,$$
it follows that  $S$ is an elliptic quasi-bundle, and we have $q=1$ and $b=0$, since
$q = b  + h^0 ( \sR^1f_* \hol_S) = b+1$, because $\sR^1f_* \hol_S$ is trivial.
\end{proof}

\begin{remark}\label{b'geq2}
Valentino Tosatti has been asking the first author whether there are effective
examples of this class of elliptic surfaces which are properly elliptic, hence have $P_{12} \geq 1$
(since by Kodaira's theorem, if $P_{12} =0$, we have an elliptic Hopf surface).

By Kodaira's canonical divisor formula, there are at least  three multiple fibres, and  then the
orbifold fundamental group covering, as we shall see, will yield an elliptic bundle
over a curve of genus at least $2$: because there are no elliptic bundles of Kodaira dimension $1$
over a curve of genus $\leq 1$, see Theorem  \ref{bundle}. 

Since $S$ is not K\"ahler, also any of its unramified coverings is not K\"ahler, hence 
$ S = S' / G$, where $G$ acts freely, and $S'$ is a principal bundle with $c(\xi)\neq 0$
over a curve of genus at least $2$. 
\end{remark}

\bigskip

We continue the discussion subdividing as usual into  two cases according to $\chi (S) = \chi (\hol_S)$
being zero or strictly positive.  

\subsection{\bf Case 0 :}$\chi (S) = \chi (\hol_S) =0.$

We have seen that, if  $e(S)=0$, by the Theorem of Zeuthen-Segre, 
all fibres are smooth or multiple of a smooth elliptic curve.
By  Kodaira's trick, $S = S' /G$, where all fibres are smooth, hence the period map is constant
and all fibres are isomorphic. 

Hence the Jacobian $J'$ of $S'$ is a product $B' \times E$, and we have the analytic Tate-Shafarevich group
$H^1 (B', \sG') $ where $\sG'$ is the sheaf of functions on $B'$ with values in the elliptic curve
$ E =  \CC / \Lambda$.

Assume now $p_g (S) >0$: by Kodaira's  canonical bundle formula \eqref{eq: canonical bundle formula},

\begin{equation*}
K_S = f^*(K_B+L) + \sum_{1\leq i\leq s} (m_i-1) F'_i
\end{equation*}
where the $m_iF'_i$ are as above  the multiple fibres of $f$,  and $\deg L =\chi(S)$,
since here $\deg L = 0$, we infer that $ b : = genus(B)  \geq 1$ and moreover $L$ is trivial if $b=1$,
hence also $\sR^1f_* \hol_S$ is trivial. 

Hence, if  $\chi (S)  =0$  and $p_g >0$, then $ q = 1 + p_g \geq 2$ and 
since $q = genus(B) + h^0 ( \sR^1f_* \hol_S)$, we infer that 
$ b  = genus(B)  \geq 2$ or $b=1$ and 
 $ \sR^1f_* \hol_S $ is trivial.
 
 If there are no multiple fibres and $b=1$, then we conclude that $S$ is an elliptic bundle:
 by Theorem \ref{bundle}, it is either a torus, or a Kodaira surface, which is not K\"ahler,
 or a hyperelliptic surface, which is algebraic.

If we assume that $S$ is K\"ahler and non-algebraic, then $S$ is a complex torus. In particular, $S$ 
has no other numerically trivial automorphisms   than  $\Aut^0(S)$.
 
 If there are no multiple fibres and $b \geq 2$, then $S$ is an elliptic bundle over
 $B$, with even $b_1(S)$.

If there are multiple fibres, then we can replace Kodaira's general trick by a more special 
and canonical construction.

Namely, consider  the orbifold fundamental group exact sequence for $f$, see for instance \cite{barlotti}:
\begin{equation}\label{fund-gr-ex-seq} \pi_1(F) \ra \pi_1(S) \ra \pi_1^{orb}(f) \ra 1,
\end{equation}
where, $m_1, \dots, m_r$ being the multiplicities of the multiple fibres, and $ \ga_1, \dots, \ga_r$ 
being simple geometric loops around the branch points,
\begin{multline*}
 \pi_1^{orb}(f) : = 
 \langle \al_1, \beta_1, \dots, \al_b, \beta_b , \ga_1, \dots, \ga_r \mid \\
\prod_i [\al_i, \beta_i]  \ga_1 \dots \ga_r =1,   \ga_1^{m_1}=  \dots =  \ga_r^{m_r} = 1\rangle .
\end{multline*}

If  $ b\geq 1$ and there are multiple fibres, 
$\pi_1^{orb} (f) $ is a Fuchsian group of hyperbolic type, and we can find an epimorphism
$\pi_1^{orb} (f) \twoheadrightarrow  G $ such that we get an unramified covering $S'$ of $S = S' / G$,
such that $f' \colon S' \ra B' $ is elliptic, with all fibres smooth, and the genus $b' : = genus(B') \geq 2$.

 Then again $S'$ is an elliptic bundle over $B' $ with fibre $ E$ having even first Betti number
$b_1(S')$.

We shall treat  the elliptic quasi-bundles, using the methods of \cite{CFGLS24}, 
in more detail in the final section,
albeit only achieving  partial results.

%
%

\subsection{\bf Case > 0 :} $\chi (S) = \chi (\hol_S) >0.$

 By Proposition 5.7 of \cite{CLS24},
$\Aut_{\QQ}(S)$ induces a trivial action on the base curve $B$,
that is,  
$$\Aut_{\QQ}(S) =  \Aut_{B, \QQ}(S) $$ 
if  
 the genus $b$ of $B$ is at least $1$, or if $\chi(S)\geq 3$. The argument for the remaining case
$b=0$ and
 $\chi(S)=2$  does not carry over directly; instead
we shall exhibit in the sequel a unified proof for all  cases (using the effect of Torelli's Theorem for K3 surfaces that
 $\Aut_{\QQ}(J(S))$ is trivial).

 \begin{prop}\label{trivialonB}
 If the elliptic surface $S\to B$ has $\chi(\mathcal O_S)>0$, then $\Aut_\QQ(S) = \Aut_{B,\QQ}(S)$  if  $p_g(S)>0$.
 In particular, any non-algebraic   K\"ahler elliptic surface $S$ satisfies $\Aut_\QQ(S) = \Aut_{B,\QQ}(S)$.
 \end{prop}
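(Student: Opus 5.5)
The strategy is to reduce the statement to the algebraic case by passing to the Jacobian surface $J=J(S)$, which is algebraic by the Remark after Kodaira's construction. Let $g\in\Aut_\QQ(S)$. Numerically trivial automorphisms preserve the elliptic fibration: if $\mathrm{Kod}(S)=1$ the fibration is the Iitaka fibration, and if $\mathrm{Kod}(S)=0$ with $\chi>0$ it is preserved because $g$ fixes the fibre class in $H^2(S,\QQ)$. So $g$ induces $\bar g\in\Aut(B)$ with $f\circ g=\bar g\circ f$, and we must show $\bar g=\id_B$.

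\medskip

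\emph{Step 1: lift to $J$.} Since $\bar g$ preserves $f$, it preserves the functional invariant $j$ and the homological invariant $\rho$, up to the conjugation induced by $g_*$ on $\sR^1f_*\ZZ$. By the uniqueness part of the Friedman--Morgan theorem, $J(S)$ is determined by $(j,\rho)$, so $g$ induces an automorphism $g_J$ of $J$ lying over $\bar g$. After composing with a Mordell--Weil translation (Theorem \ref{J-acts}(ii) applied to $J$ itself), we may assume $g_J$ fixes the zero section. Then $J$ satisfies $\chi(\hol_J)=\chi(\hol_S)>0$ and $p_g(J)=p_g(S)>0$; both equalities hold because Theorem \ref{T-family}(III) gives $\sR^1f_*\hol_S\cong\sR^1J(f)_*\hol_J$, so $L$ and $q$ agree.

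\medskip

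\emph{Step 2: $g_J$ is numerically trivial.} I would decompose $H^2(J,\QQ)$ via the Leray spectral sequence into three pieces:
\begin{itemize}
\item the fibre and zero-section classes, which are fixed by construction;
\item the classes of fibre components not meeting the zero section, where the singular fibres of $J$ and $S$ correspond and $g$ fixes the components of $S$ numerically;
\item the piece $H^1(B,\sR^1J(f)_*\QQ)$, which equals $H^1(B,\sR^1f_*\QQ)$ because the local systems coincide.
\end{itemize}
Since $g$ acts trivially on the corresponding Leray piece of $H^2(S,\QQ)$, the induced action on $J$ is also trivial there. The delicate points are the multiple fibres and ensuring that no sign $-1$ (the fibrewise inversion) appears. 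Triviality of $g$ on $H^{2,0}(S)$, which is nonzero since $p_g>0$, pins this sign down: $H^0(K_S)$ and $H^0(K_J)$ both come from $H^0(B,K_B+L)$ compatibly, so $g_J$ acts trivially on $H^{2,0}(J)$ and hence cannot be composed with the inversion. I expect this step to be the main obstacle, particularly making the Leray comparison rigorous across multiple fibres. For that I would use Kodaira's covering trick, writing $S=S'/G$ and $J=J(S')/G$.

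\medskip

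\emph{Step 3: conclude on $J$.} Now $g_J\in\Aut_\QQ(J)$ with $J$ algebraic, $\chi>0$ and $p_g>0$. If $b\ge1$ or $\chi\ge3$, Proposition 5.7 of \cite{CLS24} gives that $g_J$ acts trivially on $B$, so $\bar g=\id$. The remaining case is $b=0$, $\chi=2$, where $J$ is an elliptic K3 surface. There Torelli's theorem gives $\Aut_\QQ(J)=\{\id\}$, so $g_J=\id$ and again $\bar g=\id$.

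The final assertion follows from the earlier Proposition, which shows that a non-algebraic K\"ahler elliptic surface has $p_g>0$. If such a surface has $\chi>0$, the first part applies. The case $\chi=0$ is excluded by the hypothesis of the statement and is treated separately in Case 0.
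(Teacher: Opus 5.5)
Your argument is correct in outline, but it takes a genuinely different route from the paper. You transfer numerical triviality wholesale to the Jacobian. That is, you need $g\in\Aut_\QQ(S)\Rightarrow J(g)\in\Aut_\QQ(J(S))$ also when $\bar g\neq\id_B$, and you then quote the algebraic result \cite[Prop.~5.7]{CLS24} for $b\geq 1$ or $\chi\geq 3$, and Torelli for the K3 surface $J(S)$ when $b=0$, $\chi=2$. That transfer statement is exactly the Proposition the paper proves right after Lemma~\ref{lem:functorial}, by Mayer--Vietoris. That proof explicitly allows $\s$ to permute fibres, so there is no circularity, and the sentence preceding the statement even anticipates your use of $\Aut_\QQ(J(S))=\{1\}$.

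The paper's actual proof, however, stays on $S$:
\begin{itemize}
\item $b\geq 2$ is excluded by the action on $H^1$.
\item $b=1$ is excluded by the topological Lefschetz formula: $\s$ is then fixed-point free, giving $0=e(S)=12\chi>0$.
\item For $b=0$, Lefschetz concentrates $e(S)$ on the two invariant fibres $F_0,F_\infty$, and this is played off against the Shioda--Tate bound $\rho(J(S))\leq h^{1,1}=10\chi$. Only $\chi=2$ with an $\I_{14}^*$ fibre survives. There one needs just the easy fact that $J(\s)$ fixes $U\oplus D_{18}$ (non-multiple fibre, zero section) before applying Torelli.
\end{itemize}

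So the paper pays with a combinatorial case analysis, but it needs only a very weak comparison between $S$ and $J(S)$, and it reproves \cite[Prop.~5.7]{CLS24} uniformly along the way. Your route is shorter and more conceptual, but all its weight rests on the full transfer at multiple fibres, which you only sketch.

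When you write that step out, two adjustments will help:
\begin{itemize}
\item Use the functorial $J(g)$ induced by $g$ on $\sR^1f_*\hol_S/\sR^1f_*\ZZ$, rather than Friedman--Morgan uniqueness. The functorial choice fixes the zero section and matches $g$ on $\sR^1f_*\QQ$. Uniqueness leaves an ambiguity of $\mu_r$, not only $\pm1$, when $j$ is constant $0$ or $1$.
\item Triviality on the Leray graded pieces only gives unipotence. Conclude instead via the $J(g)$-stable splitting $H^2(J(S),\QQ)=T\oplus T^{\perp}$, where $T$ is the trivial lattice and $T^{\perp}\cong H^1(B,\sR^1J(f)_*\QQ)$, or via finiteness of $\Aut_\QQ(S)$.
\end{itemize}
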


 \begin{proof}

 In favour of a complete argument, we include and unify the proof of \cite[Prop.\ 5.7]{CLS24}.
 To this end, assume that $\sigma\in\Aut_\QQ(S)$ acts non-trivially on the base $B$, say by $\sigma|_B$.
Since the (trivial) action of $\sigma^*$ on $H^1(S,\QQ)$ restricts to that of $(\sigma|_B)^*$ on $H^1(B,\QQ)$, it follows that
 $b=g(B)<2$.
 More precisely, if $b=1$, then $\sigma|_B$
 acts by translations on $B$, so $S^\sigma=\emptyset$. 
 But then the topological Lefschetz formula gives the contradiction 
 \begin{eqnarray}
  \label{eq:e=0} 
 0 & = & e(S^\sigma) = \sum_{i=0}^4 \mbox{tr} (-1)^i \sigma^*(H^i(S,\QQ))\\
 &  \stackrel{\sigma\in\Aut_\QQ(S)}{=} &
 \sum_{i=0}^4 (-1)^ib_i(S) = e(S) = 12\chi(\mathcal O_S)>0. \nonumber
 \end{eqnarray}
 We shall now try to adapt this argument to the case $b=0$ as well, with a particular view to the situation where $\chi(\mathcal O_S)=2$,
 i.e.\ $J(S)$ is a K3 surface.
 
 Still assuming that $\sigma|_B\neq \id_B$, we find that $\sigma_B$ acts with two fixed points on $B\cong \PP^1$, say $0$ and $\infty$.
Denoting the fibres of $S$ above these points by $F_0, F_\infty$, the analogue of \eqref{eq:e=0} reads
\begin{eqnarray}
\label{eq:e=24}
0< 12\chi(\mathcal O_S) = e(S) = e(F_0^\sigma) + e(F_\infty^\sigma).
\end{eqnarray}
To put this to work, we introduce the notation $F_v$ and $J(F)_v$ for the fibres of $S$ and $J(S)$ above a point $v\in B$ and define
\[
B_0 : = B \setminus \{v\in B; F_v \text{ has smooth support}\} = B \setminus \{v\in B; J(F)_v \text{ is smooth}\}.
\]
Then, using the Zeuthen--Segre formula again,
\begin{eqnarray}
\label{eq:B_0-2}
e(S) = \sum_v e(F_v) \geq e(F_0) + e(F_\infty) + \# B_0 - 2.
\end{eqnarray}
Together with \eqref{eq:e=24}, this gives the bound
\begin{eqnarray}
\label{eq:e^sigma}
e(F_0^\sigma)-e(F_0)+e(F_\infty^\sigma)-e(F_\infty) \geq \#B_0-2.
\end{eqnarray}
Looking at the LHS of this inequality, we use the standard fact that any numerically trivial automorphism $\sigma\in\Aut_\QQ(S)$
necessarily preserves each component of a reducible fibre.
In fact, at a reducible fibre $F_v$, this gives $e(F_v^\sigma)=e(F_v)$.
In comparison, at an irreducible fibre, we have 
\begin{eqnarray}
\label{eq:irred-e}
0 \leq e(F_v^\sigma)-e(F_v) \leq 4,
\end{eqnarray}
with the right-hand-side bound attained if and only if $F_v$ has smooth support and $\sigma$ acts as an involution, 
but not as a translation.

The key input for the right-hand-side of \eqref{eq:e^sigma} is the property that the size $\# B_0$ is governed by $\chi(\mathcal O_S)$.

Consider first  the {\bf semi-stable case} where all singular fibres have multiplicative support
and thus $e(F_v) = \nu_v$,
 the number of irreducible components of $F_v$.
Since $S$ and $J(S)$ share the same invariants and the same fibre types (up to multiplicity), Lefschetz' (1,1)-theorem implies,
since $q(S)=0$,

\begin{eqnarray}
10\chi(\mathcal O_S)  & = & h^{1,1}(S) = h^{1,1}(J(S)) \label{eq:rho}\\
&  \geq & \rho(J(S) )\geq 
2 + \sum_v (\nu_v-1) = e(S) -(\# B_0-2),\nonumber
\end{eqnarray}
 where the second inequality is usually coined as (key application of the) Shioda--Tate formula.
 Writing $e(S)=12\chi(\mathcal O_S)$, we infer that 
 \begin{eqnarray}
 \label{eq:B_0}
 \# B_0-2 \geq 2\chi(\mathcal O_S).
 \end{eqnarray}
 With  $\chi(\mathcal O_S)>0$, the discussion around \eqref{eq:e^sigma} and \eqref{eq:irred-e} implies,
 still for the semi-stable case, that
 at least one of the two fibres, say $F_0$, is irreducible.

 But then \eqref{eq:e=24} returns
 \[
 e(F_\infty^\sigma) = e(S) -e(F_0^\sigma) \geq e(S)-4
 \]
 whence $F_\infty$ has to be reducible (by \eqref{eq:irred-e}), and, in fact,
 \[
 e(F_\infty^\sigma) = e(F_\infty) = \nu_\infty \geq  12\chi(\mathcal O_S)-4
 \]
 gives a direct contradiction to \eqref{eq:rho} since $p_g(S)>0$ and $b=0$, hence $\chi(\mathcal O_S)>1$:
 \[
 10\chi (S) \geq \nu_\infty+1 \geq 12 \chi(S) -3.
 \]
 In conclusion, $\Aut_\QQ(S)=\Aut_{B,\QQ}(S)$ in the semi-stable case with $p_g(S)>0$.

 Turning to the {\bf non-semi-simple case}, we assume that there are $N>0$ additive fibres,
 of which $N_0$ outside $0, \infty$.
 The main difference at an additive fibre $F_v$ is that $e(F_v) = \nu_v+1$.
 This has the effect that \eqref{eq:rho} becomes
 \[
 10\chi(\mathcal O_S) \geq e(S)  -(\# B_0-2) - N
 \]
 and \eqref{eq:B_0} becomes
 \begin{eqnarray}
 \label{eq:B_0'}
  \# B_0-2 + N \geq 2\chi(\mathcal O_S).
 \end{eqnarray}
\begin{claim}
\label{claim:irred}
If $p_g(S)>0$, then $F_0$ or $F_\infty$ has irreducible support.
\end{claim}

\begin{proof}[Proof of Claim \ref{claim:irred}]
If $N<2\chi(\mathcal O_S)$, then $\# B_0-2>0$ by \eqref{eq:B_0'}, and  the claim follows as before.

If $N\geq 2\chi(\mathcal O_S)\geq 4$, then $\#B_0\geq N_0\geq 2$, and we infer the following improvement of \eqref{eq:B_0-2}:
\begin{eqnarray*}
\label{eq:B_0-2'}
e(S) = \sum_v e(F_v) \geq e(F_0) + e(F_\infty) + \# B_0 - 2 + N_0 \geq e(F_0) + e(F_\infty) + N_0.
\end{eqnarray*}
As a  consequence, the bound of \eqref{eq:e^sigma} improves to
\[
e(F_0^\sigma)-e(F_0)+e(F_\infty^\sigma)-e(F_\infty) \geq N_0 \geq 2, 
\]
and we are in business again.
\end{proof}

By Claim \ref{claim:irred}, we may assume that $F_0$ is irreducible.
Then, as before, $F_\infty$ has to be reducible,
and in fact additive, as the semi-stable case  for $F_\infty$
leads to the same contradiction as before.

Hence
\[
\nu_\infty+1 = e(F_\infty^\sigma) = e(F_\infty) \geq 12\chi(\mathcal O_S)-4
\]
translates, using \eqref{eq:rho}, into
\[
 10\chi \geq \nu_\infty+1 \geq 12\chi-4.
 \]
 whence $\chi=2$ and $\nu_\infty=19$.
 By Kodaira's classification of singular fibres, this predicts that $F_\infty$ has type $\I_{14}^*$,
 corresponding to the extended Dynkin diagram $\tilde D_{18}$.
 Then this fibre is not multiple, hence $\sigma$ and $J(\sigma)$ act as the identity on the set  of fibre components.

 Since the zero section on  $J(S)$ is preserved by $J(\sigma)$,
 we infer that $J(\sigma)^*$ acts trivially on 
 $ U \oplus D_{18} \subset \NS(J(S))$, hence also on $\NS(J(S))$.
  
   Indeed, we may observe that we have equality $\NS(J(S)) \cong U \oplus D_{18}$.
  For otherwise $\NS(J(S))$ would be a finite index overlattice of $U \oplus D_{18}$
as the rank bound $\rho(J(S))\leq h^{1,1}(S)=20$ is attained. But then, since $U \oplus D_{18}$ has determinant $-4$,
$\NS(J(S))$ would be even unimodular of signature $(1,19)$,
contradicting the standard fact that the difference of positive and negative eigenvalues of any Gram matrix
of a unimodular even lattice is divisible by $8$.

 Hence, by the Torelli Theorem for K3 surfaces, applied to $J(S)$,
 either $J(\sigma)=\id_{J(S)}$ or $J(\sigma)^*$  acts nontrivially on the transcendental lattice which equals $A_1(-1)^2$
(cf.\ the tables of \cite{SZ01}, for instance).
 In the former case, the action on the base would be trivial, giving the required contradiction;
 in the latter case,
  the action on $A_1(-1)^2$ can only be multiplication by $-1$, because it has to act trivially  on the discriminant group; 
 whence, by Hodge theory, $J(\sigma)^*$ acts in the same way on $H^0(\Omega^2_{J(S)})$.
 But then, again by the Torelli theorem, there is a unique such automorphism on $J(S)$,
 namely the one given by inversion on the generic fibre (which indeed preserves all fibre components of the $\I_{14}^*$ fibre).
 This, however, acts trivially on the base, leading to the same contradiction as before.

 Alternatively, 
 the functoriality of the isomorphism $H^0 (\Omega^2_S ) \cong H^0 (\Omega^2_{J(S)} )$ of Lemma \ref{lem:functorial} 
  implies  that,  since $J(\sigma)^*$ acts non-trivially on $H^0 (\Omega^2_{J(S)} )$  in the last step, 
  also $\sigma^*$ acts nontrivially on $H^2(S,\QQ)$, and we can get rid of the latter case.
 \end{proof}

 \begin{remark}
 The above arguments can also be adjusted to the case $p_g(S)=0$ to give an alternative proof of the complex part of \cite[Thm.\ 6.4]{DM22}.
 \end{remark}

 Having settled the action of $\Aut_\QQ(S)$ on the base curve $B$,
 we continue with two general results
 confirming statements from the algebraic setting in \cite{DM22}, \cite{DL23}, \cite{CLS24}.

\begin{lem}
\label{lem:functorial}
Any automorphism $\s$ of $f : S \ra B$ induces an automorphism $J(\s)$ of $J(f) : J(S) \ra B$,
 and there is a canonical  
 functorial isomorphism between
   $H^0 (\Omega^2_S ) \cong H^0 (\Omega^2_{J(S)} )$.  That is,   for 
   each $\s$, the action of $\s$ on the left hand side is isomorphic to the action of $J(\s)$
   on the right hand side.

Moreover, the action of $\s$ on $H^1(S, \QQ)$ is the same as  the action of $J(\s)$
  on $H^1(J(S), \QQ)$.

\end{lem}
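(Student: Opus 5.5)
The plan is to construct $J(\s)$ from the action of $\s$ on the sheaf-theoretic data defining $J(S)$, and then to compare cohomology of $S$ and $J(S)$ through the Leray spectral sequences of $f$ and $J(f)$. All identifications will be built from sheaves on $B$ on which $\s$ acts naturally. Write $\s_B$ for the automorphism of $B$ induced by $\s$.

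\emph{Construction of $J(\s)$.} Since $\s$ maps $F_t$ isomorphically onto $F_{\s_B(t)}$, push-forward of line bundles gives group isomorphisms $\Pic^0(F_t) \ra \Pic^0(F_{\s_B(t)})$ preserving the origin. Equivalently, $\s$ induces an isomorphism of sheaves
$$\s_B^{-1}\bigl(\sR^1f_*\hol_S/\sR^1f_*\ZZ\bigr) \cong \sR^1f_*\hol_S/\sR^1f_*\ZZ.$$
This yields a fibre-preserving biholomorphism of $J^0$ over $\s_B$ which maps the zero section to itself. Consequently $\s_B^*J(S)$ and $J(S)$ are Jacobian fibrations with the same pair of invariants $(j,\rho)$; indeed $(j\circ\s_B,\ \rho\circ(\s_B)_*)$ is identified with $(j,\rho)$ via $\s$. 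By the uniqueness part of the Friedman--Morgan theorem quoted above, together with the fact that a bimeromorphic map between relatively minimal elliptic surfaces over $B$ is biregular (as in the proof of Theorem \ref{J-acts}), the map extends to an automorphism $J(\s)$ of $J(S)$. Functoriality $J(\s\tau)=J(\s)J(\tau)$ is clear on $J^0$, hence holds everywhere.

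\emph{The $(2,0)$-forms.} Since $f_*\hol_S=\hol_B$ and $B$ is a curve, the Leray spectral sequence gives a $\s$-equivariant isomorphism
$$H^2(S,\hol_S)\cong H^1(B,\sR^1f_*\hol_S),$$
and likewise $H^2(J,\hol_J)\cong H^1(B,\sR^1J(f)_*\hol_J)$. By Theorem \ref{T-family}(III), or directly because $\sR^1f_*\hol_S$ is the Lie algebra sheaf of $\sG$, we have a canonical isomorphism $\sR^1f_*\hol_S\cong \sR^1J(f)_*\hol_J$. This isomorphism commutes with the actions of $\s$ and $J(\s)$, because $J(\s)$ was defined through the same action on $\sR^1f_*\hol_S$. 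Hence $H^2(S,\hol_S)\cong H^2(J,\hol_J)$ equivariantly. Serre duality $H^0(K_X)\cong H^2(\hol_X)^\vee$ then transports this to $H^0(\Omega^2)$, since the trace $H^2(K_X)\cong\CC$ is invariant under biholomorphisms. Both duals carry the contragredient actions, so the identification is again equivariant.

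\emph{$H^1$.} Consider the Leray exact sequences
$$0\ra H^1(B,\QQ)\ra H^1(S,\QQ)\ra H^0(B,\sR^1f_*\QQ)\ra H^2(B,\QQ),$$
and the analogous one for $J(S)$; the first terms coincide together with the $\s_B$-action. For the third terms I will show that $\sR^1f_*\QQ\cong \sR^1J(f)_*\QQ$ equivariantly. Over $B^*$ this holds because both sheaves are the local system defined by the homological invariant $\rho$. At a multiple fibre $mF'$, Kodaira's trick identifies the stalk with the $G$-invariants of the stalk upstairs, which are the same for $S'$ and $J(S')$ since $G$ acts there by translations. Moreover, $H^0(B,\sR^1f_*\QQ)$ equals the monodromy invariants $H^0(B^*,\sR^1f_*\QQ)$, because the stalks at singular fibres inject into the invariants of local monodromy. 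The edge maps $H^0(B,\sR^1f_*\QQ)\ra H^2(B,\QQ)$ both vanish, since $J$ has a section, and for $S$ the two rational Leray sequences have the same $E_2$-terms and the same $d_2$, the latter being a characteristic class of the local system.

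This last point is the main obstacle. Equality of the actions on the graded pieces gives the same action on $H^1$ only up to extension. In the K\"ahler case I would split the filtration canonically via Hodge theory, using the $\s$-invariant decomposition $H^1(\QQ)\otimes\CC=H^{1,0}\oplus H^{0,1}$ and $H^{1,0}(S)\supset f^*H^0(K_B)$. Alternatively, since $\s$ has finite order on the relevant quotient, the representations are semisimple and therefore determined by their graded pieces. I expect the verification that the $d_2$ differentials agree, in the presence of multiple fibres, to require the most care.
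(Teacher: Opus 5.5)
Your construction of $J(\sigma)$ and your treatment of $H^0(\Omega^2)$ are sound. The latter (Leray for $\hol_S$, the canonical isomorphism $\sR^1f_*\hol_S\cong\sR^1J(f)_*\hol_{J(S)}$, then Serre duality on the surface) is the Serre-dual form of the paper's argument, which uses relative duality $f_*\omega_S\cong\Omega^1_B\otimes(\sR^1f_*\hol_S)^\vee$ and takes global sections.

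The gap is in the $H^1$ part, and it is not a mere technicality. First, your claim that $d_2\colon H^0(B,\sR^1f_*\QQ)\ra H^2(B,\QQ)$ is a characteristic class of the local system is false. A primary Kodaira surface and its Jacobian $B\times E$ have the same trivial local system, yet $b_1=3$ versus $b_1=4$. Second, and more seriously, no argument of the kind you propose can settle the extension problem, because in the generality you are working in the statement fails. Take $S=B\times E=J(S)$ and $\sigma(t,z)=(t,z+\phi(t))$ with $\phi\colon B\ra E$ nonconstant. Translations act trivially on $\Pic^0$ of the fibres, so your $J(\sigma)$ is $\id_{J(S)}$. But $\sigma^*(\mathrm{pr}_E^*\alpha)=\mathrm{pr}_E^*\alpha+\mathrm{pr}_B^*\phi^*\alpha$ is a nontrivial unipotent map on $H^1(S,\QQ)$. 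This $\sigma$ has infinite order and preserves no complement to $f^*H^1(B,\QQ)$, so neither semisimplicity nor a Hodge-theoretic splitting is available.

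What you are missing is the standing hypothesis of the subsection containing the lemma: $\chi(S)>0$, with $S$ K\"ahler. Then $\sR^1f_*\hol_S\cong L^{-1}$ with $\deg L=\chi(S)>0$, so $q(S)=b+h^0(L^{-1})=b$ and $b_1(S)=2b$. Hence $f^*\colon H^1(B,\QQ)\ra H^1(S,\QQ)$ is an isomorphism, and the term $H^0(B,\sR^1f_*\QQ)$ in your Leray sequence contributes nothing. The same holds for $J(S)$, which has the same $\chi$. Both $\sigma^*$ and $J(\sigma)^*$ are then just $\sigma_B^*$ on $H^1(B,\QQ)$, and neither the $d_2$ question nor any extension question arises. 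This is the situation in which the paper's one-line deduction of the $H^1$ statement from $\sR^1f_*\hol_S\cong\sR^1J(f)_*\hol_{J(S)}$ is valid.
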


  \begin{proof}
  
  We have already seen that to $\s \in \Aut(S)$ corresponds $J(\s)$, and the action
  on the base curve $B$ is the same.
  
  Moreover, we have an isomorphism between $\sR^1f_* (\hol_S)$ and $\sR^1J(f)_* (\hol_{J(S)})$,
  by the property that $J(S)$ is a compactification of $\sR^1f_* (\hol_S) / \sR^1f_* (\ZZ_S)$.
  
  Hence the action of $\s$ on $H^1(S, \QQ)$ is the same as  the action of $J(\s)$
  on $H^1(J(S), \QQ)$.
  
  Moreover, by  relative duality, see \cite{bo-mu2} page 29,  and also \cite{kleiman},
  
    $$\sR^1f_* (\hol_S)^{\vee} \otimes (\Omega^1_B)    \cong  f_* (\om_{S}) = f_* (\Omega^2_S) ,$$
    and we have an identical formula for $J=J(S)$, 
  hence there is a natural isomorphism between $ f_* (\Omega^2_{S})$ and $ J(f)_* (\Omega^2_{J})$.

  Taking global sections, 
   $$H^0 (\Omega^2_S ) \cong H^0 (B, \Omega^1_B \otimes sR^1f_* (\hol_S)^{\vee}) \cong H^0 (\Omega^2_J )  .$$
and we have a functorial isomorphism between
   $H^0 (\Omega^2_S ) \cong H^0 (\Omega^2_{J(S)} )$ (functorial means that, for 
   each $\s$, the action of $\s$ on the left hand side is isomorphic to the action of $J(\s)$
   on the right hand side).
   
   Alternatively, to see that we have this functorial isomorphism  it suffices to show that there is a functorial isomorphism between
   $f_* (\om_{S|B})$ and $f_* (\om_{J|B})$.

   This is a consequence of Kodaira's  canonical bundle formula and of the local description
   of the Jacobian surface $J(S)$ in the neighbourhood of a multiple fibre: because the differential form
   $ dt \wedge dz$ on the tubular neighbourhood of the fibre deprived of the fibre, which has a regular extension 
   in the case of  the Jacobian $J(S)$, acquires some divisor of zeros with support on the fibre
    in the case of $S$, which is 
   described by the canonical bundle formula (section 12 of \cite{ kodaira2-3}), and however killed by taking the direct image.
  \end{proof}

 \begin{prop}  We have a natural isomorphism $H^*(S, \QQ) \cong H^*(J(S), \QQ) $.
 
  Moreover any automorphism $\s$ of $f : S \ra B$ induces an automorphism $J(\s)$ of $J(f) : J(S) \ra B$,
 and 
  if $\s \in \Aut_{\QQ}(S) $ then $ J(\s) \in \Aut_{\QQ}(J(S))$.
  
  
  \end{prop}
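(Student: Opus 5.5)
The plan is to compare the two surfaces degree by degree, using that $S$ and $J(S)$ are fibrations over the same base with the same local systems and invariants away from multiple fibres.

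\emph{Betti numbers.} The first step is to show that all Betti numbers coincide.
\begin{itemize}
\item By Lemma \ref{lem:functorial}, $H^1(S,\QQ)\cong H^1(J(S),\QQ)$ functorially. Concretely, both groups are extensions of $H^0(B,\sR^1f_*\QQ)$ by $H^1(B,\QQ)$ via the Leray spectral sequence.
\item The local systems $\sR^1f_*\QQ$ and $\sR^1J(f)_*\QQ$ agree over $B^*$. This is because $J(S)$ has the same homological invariant $\rho$, and over a multiple fibre the monodromy is of finite order that is trivial rationally.
\item Poincar\'e duality then gives $H^3(S,\QQ)\cong H^3(J(S),\QQ)$, and $H^0$, $H^4$ are trivially identified.
\item For $H^2$, the Euler numbers agree, since $e(S)=e(J(S))$ by Zeuthen--Segre: fibres have the same types up to multiplicity, and a multiple fibre $mF'$ has $e(F')=e(F'')$ for the corresponding fibre of $J$, as in the logarithmic transformation discussion. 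Hence $b_2(S)=b_2(J(S))$.
\end{itemize}
A natural isomorphism on $H^2$ is then obtained by choosing the Leray filtration compatibly. More simply, one can use Theorem \ref{T-family} and Theorem \ref{tate-shaf}: $S$ is obtained from $J$ by logarithmic transformations followed by an element of the connected group $\Sha^{an}$, so $S$ and $J$ are rationally homotopy-related via a deformation and local surgery preserving rational cohomology of the complement of the multiple fibres.

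\emph{Numerically trivial $\s$ gives numerically trivial $J(\s)$.} Let $\s\in\Aut_\QQ(S)$; by Proposition \ref{trivialonB} (or directly in the case at hand), $\s$ acts on $B$.
\begin{itemize}
\item Degrees $0,1,3,4$: triviality on $H^1(J(S),\QQ)$ is exactly Lemma \ref{lem:functorial}. Triviality on $H^3$ follows by Poincar\'e duality, since $J(\s)$ is orientation preserving. Degrees $0$ and $4$ are automatic.
\item Degree $2$, Hodge part: the $(2,0)$ and $(0,2)$ parts are controlled by Lemma \ref{lem:functorial}. $J(\s)$ acts on $H^0(\Omega^2_{J})$ as $\s$ does on $H^0(\Omega^2_S)$, which is trivially, because $\s^*$ is trivial on $H^2(S,\CC)\supset H^{2,0}$. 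Here $J$ is K\"ahler since it is algebraic.
\item Degree $2$, $H^{1,1}$ part: write $H^2(J,\QQ)$ as containing $\NS(J)_\QQ$, with orthogonal complement the transcendental part. Since $T_J\otimes\CC$ is the smallest rational Hodge substructure containing $H^{2,0}$, and $J(\s)^*$ is trivial on $H^{2,0}$, it is trivial on $T_J$.
\item Degree $2$, $\NS(J)_\QQ$: this space is spanned by the zero section, the fibre class, the fibre components and the Mordell--Weil sections. Fibre components are preserved, because $\s$ preserves the components of each fibre of $S$ and these correspond bijectively (up to the cyclic quotient at multiple fibres) to those of $J$. The zero section is preserved by construction of $J(\s)$.
\item Mordell--Weil: the action on $\MW(J)\otimes\QQ$ corresponds to the action on $H^2$ modulo the trivial lattice. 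I would deduce its triviality via the Shioda--Tate decomposition, identifying $\MW(J)\otimes\QQ$ with the part of $H^1(B,\sR^1f_*\QQ)$ of type $(1,1)$. The Leray piece $H^1(B,\sR^1f_*\QQ)$ is shared by $S$ and $J$ equivariantly, and $\s$ acts trivially on it as a subquotient of $H^2(S,\QQ)$.
\end{itemize}

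\emph{Main obstacle.} The delicate point is making the $H^2$ comparison equivariant. It should be done through the Leray filtration: $H^2=$ an extension of $H^0(\sR^2)$ and $H^1(\sR^1)$ and $H^2(B)$-image, where $\sR^2f_*\QQ$ differs only by skyscraper contributions from reducible fibres, which are matched component by component. I would first establish the equivariant isomorphism of $H^1(B,\sR^1f_*\QQ)$ from the isomorphism of local systems on $B^*$ together with the vanishing of rational local invariants at multiple fibres, and then conclude.
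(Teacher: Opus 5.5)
Your route differs from the paper's. The paper argues locally. It covers $S$ and $J(S)$ by corresponding tubular neighbourhoods of fibres, whose gluings over $B^*$ differ only by fibrewise translations (and by a logarithmic transformation at multiple fibres). It then compares the two Mayer--Vietoris sequences and checks equivariance piece by piece, with a case analysis showing that $J(\s)$ fixes every component of every reducible fibre of $J(S)$.

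You work globally on the projective surface $J(S)$. You import from $S$ only the actions on $H^1$, on $H^{2,0}$ (Lemma \ref{lem:functorial}) and on the Leray piece $H^1(B,\sR^1f_*\QQ)$, and then use Shioda--Tate. This is viable, and it has the merit of proving $J(\s)\in\Aut_\QQ(J(S))$ intrinsically, without a natural identification of all of $H^2$. The Leray piece is available unconditionally, since over a curve $E_2^{1,1}=E_\infty^{1,1}$. The sheaves $\sR^1f_*\QQ$ and $\sR^1J(f)_*\QQ$ agree equivariantly on all of $B$, by equality on $B^*$ plus the local invariant cycle property. (At $m\I_k$ the local monodromy is that of $\I_k$, not a ``finite order'' element.)

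You can also shorten the end. Let $\mathrm{Triv}$ be the trivial lattice and $L^\bullet$ the Leray filtration. Then $\mathrm{Triv}$ is nondegenerate and $J(\s)$-stable, and $H^2(J,\QQ)=\mathrm{Triv}+L^1$ with $\mathrm{Triv}\cap L^1=\QQ F$. Hence $\mathrm{Triv}^\perp\cong H^1(B,\sR^1J(f)_*\QQ)$ equivariantly, and triviality on $\mathrm{Triv}$ and on the Leray piece already gives triviality on $H^2$. The separate treatment of $T_J$ and of $\MW$ is then unnecessary.

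The genuine gap is exactly the step the paper works hardest on: ``fibre components are preserved, because $\s$ preserves the components of each fibre of $S$ and these correspond bijectively to those of $J$.'' A bijection exists, but its compatibility with $\s$ and $J(\s)$ is not automatic. Near a non-multiple fibre, take a local isomorphism $\phi\colon S|_U\cong J|_U$ as in Lemma 5.2 of \cite{FM94}. Then $J(\s)=t_{-s}\circ\phi\s\phi^{-1}$, where $t_{-s}$ is translation by the local section $s=\phi(\s(s_0))$, and such translations rotate components. For instance, for $S=J$ and $\s$ the translation by a section through a non-identity component, $J(\s)=\id$ while $\s$ permutes components.

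For non-multiple fibres you can close the gap in one line. Both $J(\s)$ and $\phi\s\phi^{-1}$ fix the identity component $\Theta_0$, so $s$ meets $\Theta_0$. Hence $t_{-s}$ acts trivially on the component group, and $J(\s)$ fixes every component. For multiple fibres $m\I_k$ with $k\ge2$ you need something like the paper's argument. By \eqref{Pic}, $\s$ centralizes the order-$m$ deck group of $F\to F'$, so a lift of $\s$ fixes the components of the $\I_{km}$ fibre upstairs. After contracting the $A_{m-1}$ chains, $J(\s)$ fixes the components of the $\I_k$ fibre of $J(S)$. Your ``main obstacle'' paragraph relies on the same unproved matching for $\sR^2f_*\QQ$.

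Finally, your first part only shows that the Betti numbers agree, i.e.\ an abstract isomorphism. ``Choosing the Leray filtration compatibly'' does not define a map, and the deformation/log-transform sentence only points to what the paper carries out with Mayer--Vietoris. Also, describing $H^1$ as an extension of $H^0(B,\sR^1f_*\QQ)$ by $H^1(B,\QQ)$ uses the vanishing of $d_2\colon H^0(B,\sR^1f_*\QQ)\to H^2(B,\QQ)$, i.e.\ the K\"ahler setting. For a primary Kodaira surface $b_1(S)=3\neq4=b_1(J(S))$.
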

  \begin{proof}

  The first statement is based on    the theorem of Mayer-Vietoris, applied to  the decompositions
   $$S = S^0 \cup (\cup_1^s U_i), \ S^0 = \cup_1^s W_j$$
  with the $U_i$'s, $W_j$'s being inverse images of connected open sets in $B$, and 
  tubular neighbourhoods of some fibre, which for 
  each $U_i$  is  a singular fibre $F_{p_i}$.

  We have    corresponding (i.e., the inverse images of the same open sets) decompositions for the Jacobian surface $J : = J(S)$, 
  $$J = J^0 \cup (\cup_1^s V_i), \ J^0 = \cup_1^s W_j,$$
  where the open sets $W_j$ are the same in view of 
  Definition \ref{T}
  and Theorem \ref{T-family} (essentially by Lemma 5.2 of \cite{FM94}). 
  
  We can also assume that all these open sets are homotopically equivalent
  to the support of some fibre, hence either a smooth elliptic curve, or a fibre with first cohomology 
  of rank $\leq 1$.
  
   Since  $U_i$, respectively $V_i$, and $W_j$, are homotopically equivalent to a fibre, 
  their  cohomology algebras have degree at most $2$ and their second cohomology groups $H^2(U_i, \QQ), \hdots$
    are  generated by the 
  fundamental classes of the components
  of the fibres. 
  
  If there are no  multiple fibres, again  by Lemma 5.2 of \cite{FM94} we can take also the
  $ U_i = V_i$, and observe that the gluing maps for $S$, and those for $J$, just differ by translations 
  on the smooth fibres: hence, by homotopy equivalence,  the Mayer-Vietoris sequence for $S$ is isomorphic to
  the Mayer-Vietoris sequence for $J$,  and we have proven that $H^*(S, \QQ) \cong H^*(J, \QQ) $.

  If instead there is some multiple fibre, then we use the fact that  $U_i, V_i$  are related via a logarithmic transformation.
  The effect of it is to glue together the same reduced curve, and on the punctured open set
  we have again some  translations: whence we have in any case
  a natural  isomorphism $H^*(S, \QQ) \cong H^*(J, \QQ) $.
  
  Let us pass to the second statement: by the previous Lemma \ref{lem:functorial}
 it is sufficient   to see  that the respective actions on $H^2(S, \QQ) $ and $H^2(J, \QQ) $
 are the same: hence the main idea is to compare locally the action on
 corresponding open sets.

We use the assumption that  the automorphism $\s$ is numerically trivial.
 Recall that the only fibres which can be permuted by $\s \in \Aut_{\QQ}(S)$ are those whose reduced support 
  is irreducible: because, 
   if $\s \in \Aut_{\QQ}(S)$, and the number of components is at least $2$,
  then each component has negative self-intersection, hence it is left invariant by $\s$.

If $\s$ permutes some $U_i$'s , then necessarily $J(\s)$  permutes accordingly the $V_i$'s
and the action on the second cohomology will always take the fundamental class of the fibre
to itself. This solves the question if the corresponding fibre is smooth (in this case, one sees directly that the action
on the first cohomology is trivial on both sides, being induced by a translation), or irreducible but not $I_1$.
In the case of $I_1$,  either the automorphism $\s$  is homotopic to the identity,
or exchanges the two branches at the node.

 In the latter case, it induces an inversion in $\CC^*$ sending $ 0 \mapsto \infty$, and which has a fixed point
 on the smooth part of the fibre: letting this point correspond to the point in the Jacobian fibre
 lying on the zero section, we  have the same action on $\Pic^0( \CC^*)$, hence
also the action on the first cohomology is the same.

If $\s$  leaves a singular fibre fixed, then   the components of the fibre in $S$ are left invariant.

 And $J(\s)$  does not permute the components of the fibre in $J$, which come from the natural compactification of
  $\Pic^0(F^*)$.

 We turn now to fibres which have at least  two  irreducible components, and 
  we need to show that, if a fibre of $J(S)$
   has several components,
  then $J(\s)$ does not permute these components.

  Now, the only fibres of $J(S)$ admitting automorphisms possibly permuting the irreducible components nontrivially
  correspond to the fibres of type of additive type $A_2, E_6, D_n \; (n\geq 4)$ of $S$  or to the fibres of type $m I_k$, with $k \geq 2$.
  
 In  the case of these additive fibres  there are three or four simple components  in the fibre of the Jacobian,
 which correspond to $\Pic^0$ of the end (= simple) components of the additive fibre    in $S$. Hence this case is clear.
  
  If  the fibre $F'' = m F' $ is of type $m I_k$, with $ k \geq 2$, then the corresponding fibre in the Jacobian fibration is of type $I_k$,
  and the component of this fibre which intersects the $0$-section 
  is obviously preserved.
  
  Since $\s$ preserves each component of the multiple fibre, and since by equation \eqref{Pic} 
  we see that the action of $\s$ centralizes  the subgroup of order $m$ corresponding to the
  covering $F \ra F'$, we see that there is a lift of $\s$ which acts as  the identity on the components of $F$:
  hence, collapsing the chains of $A_{m-1}$ singularities, we obtain that $J(\s)$ is also acting
  as  the identity on the components of the $I_k$-fibre of $J(S)$.
  In particular, it acts trivially also on the first cohomology group.

  Hence, finally,  the Mayer-Vietoris exact sequence allows us to conclude  that,
  via the isomorphism of cohomology groups established in the first part of the proof, then
  $\s \in \Aut_{\QQ}(S)$ implies  $J(\s) \in \Aut_{\QQ}(J(S))$.
  \end{proof}

\medskip

We shall now establish the analog of Proposition 3.4 of \cite{CLS24}:

\begin{prop}\label{lem: AutQ vs MW}
Let $f\colon S\rightarrow B$ be a relatively minimal elliptic K\"ahler surface with 
$p_g(S)>0$: then we have
\[
 \Aut_{B, \QQ}(S) \subset Im (\MW(J(S))_\tor).
\]
In particular,  if $f$ admits a fibre of additive type, then $\Aut_{B,\QQ}(S)$ is trivial.
\end{prop}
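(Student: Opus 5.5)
We want to show that every $\s\in\Aut_{B,\QQ}(S)$ is of the form $T_\eta$ (Theorem \ref{J-acts}(ii)) for a torsion section $\eta\in\MW(J(S))_\tor$. The plan is to first show that $\s$ acts on the general fibre as a translation, then to identify that translation with a section of $J(S)$, and finally to show the section is torsion.

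\textbf{Step 1: $\s$ is a translation on each smooth fibre.} Since $\s$ acts trivially on $B$, on each smooth fibre $F_t$ it acts by an element of $\Aut(F_t)=F_t\rtimes\mu_r$. Its linear part is a locally constant $r$-th root of unity $\zeta$ over $B^*$, hence constant. Via the relative duality of Lemma \ref{lem:functorial}, $\s^*$ acts on $f_*(\Omega^2_S)$, and thus on $H^0(\Omega^2_S)$, by multiplication by $\zeta$ (up to inverse). Because $p_g(S)>0$ and $\s^*$ is trivial on $H^2(S,\QQ)\supset H^{2,0}$, we get $\zeta=1$. This is the step that uses the hypothesis $p_g>0$; it replaces the algebraic argument of \cite{CLS24}.

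\textbf{Step 2: construct the section $\eta$.} On $F_t$ for $t\in B^*$, define $\eta(t):=\s(p)-p\in\Pic^0(F_t)=J_t$, which is independent of $p$ because $\s$ is a translation. This gives a holomorphic section of $J^0\to B^*$. We must extend it over the critical values. Since $J$ is relatively minimal and $B$ is a curve, a holomorphic section over $B^*$ that is locally bounded (it lies in a proper fibration) extends across the punctures, for instance by taking the closure of its graph and using the valuative criterion / Riemann extension for maps to a compact fibre. This yields $\eta\in\MW(J(S))$. Then $T_\eta$ and $\s$ agree on $S^0$, hence everywhere, so $\s=T_\eta$.

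\textbf{Step 3: $\eta$ is torsion.} The map $\eta\mapsto T_\eta$ is a homomorphism $\MW(J)\to\Aut_B(S)$, so $\Aut_{B,\QQ}(S)$ lies in its image. The preceding Proposition gives $J(\s)\in\Aut_\QQ(J)$, and for $\s=T_\eta$ we have $J(\s)=T_\eta$ on $J$, which maps the zero section $O$ to the section $\eta$. Numerical triviality therefore gives $[\eta]=[O]$ in $H^2(J,\QQ)$. By the Shioda--Tate description ($J$ is algebraic, with Mordell--Weil group $\NS(J)/$trivial lattice), a section whose class lies in the trivial lattice tensored with $\QQ$ is torsion. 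Hence $\eta\in\MW(J(S))_\tor$. If $T_\eta$ is the identity on $S$ but $\eta\neq0$, nothing changes, since we only claim containment in the image.

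\textbf{Additive case.} If there is a fibre of additive type, $J$ has a corresponding additive fibre. Torsion sections inject into the smooth-locus group of that fibre, namely $\CC\times$(component group), and the $\CC$ part is torsion-free. So $\eta$ meets a non-identity component there, and $T_\eta$ permutes the simple components of the fibre of $J$ nontrivially. By the correspondence between simple components of $J$ and the end components of $S$ used in the previous proof, $\s$ then permutes components of a reducible fibre of $S$. This contradicts numerical triviality unless $\eta=0$. The fibre is reducible or not multiple here; irreducible additive fibres ($\mathrm{II}$) carry no nonzero torsion at all. Hence $\Aut_{B,\QQ}(S)$ is trivial.

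The main obstacle is expected to be Step 2, the extension of $\eta$ across singular and multiple fibres. To handle the multiple fibres, one can pass to Kodaira's cover $S'$ and use $G$-invariance.
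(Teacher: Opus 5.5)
Your Step 1 matches the paper's argument: $\sigma^*\omega=\omega$ with $p_g>0$ and the Hodge decomposition forces the linear part to be $1$. The decisive gap is in Step 3. The automorphism $J(\sigma)$ covered by the preceding Proposition is the one induced functorially on $\sR^1f_*\hol_S/\sR^1f_*\ZZ$, i.e.\ on $\Pic^0$ of the fibres. It fixes the zero section, and for a fibrewise translation $\sigma=T_\eta$ it is the identity of $J$, so it tells you nothing about $\eta$. If you mean instead translation by $\eta$ on $J$, nothing makes it numerically trivial, and your argument proves too much. $[\eta]=[O]$ in $H^2(J,\QQ)$ would give $\eta\cdot O=O^2=-\chi(S)<0$, hence $\eta=O$, i.e.\ $\Aut_{B,\QQ}(S)=\{1\}$ always. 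That contradicts the non-algebraic K\"ahler examples with $\Aut_{\QQ}(S)\cong\ZZ/d\ZZ\oplus\ZZ/da\ZZ$: there $T_\eta$ is numerically trivial on $S$ but not on $J$. The missing idea is finiteness, and this is where $\chi(S)>0$ must enter. It is implicit in the statement, which sits in the case $\chi>0$; without it, $S=B\times E$ with $g(B)\geq 2$ has $E\subset\Aut_{B,\QQ}(S)$. Since $\chi(S)>0$ rules out holomorphic vector fields, Lieberman--Fujiki gives that $\Aut_{B,\QQ}(S)$ is finite, so $\eta$ has some finite order $k$.

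Step 2 rests on a false principle. There is no valuative criterion in the analytic category: a holomorphic section of a proper elliptic fibration over a punctured disc need not extend, and the closure of its graph need not be analytic. For example, $t\mapsto(t,[e^{1/t}])$ in $\Delta^*\times E$ does not extend. The paper therefore proves torsion \emph{before} extending. The section over $B^*$ then takes values in the curve $J(S)[k]$, so the closure of its graph is not dense, and Remmert--Stein gives the holomorphic extension to all of $B$. You could also use that $\sigma$ is holomorphic across the singular fibres. Near a non-multiple fibre, choose a local section $p$ of $f$ and identify $S$ locally with $J$ so that $p$ goes to $O$; then $\eta$ corresponds to $\sigma\circ p$, which is holomorphic on the whole disc. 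Multiple fibres would then need Kodaira's cover. That is not the justification you gave, though. Once torsion and the extension are established, your treatment of the additive case is correct, and more explicit than the paper's one-line remark.
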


\begin{proof}

By the K\"ahler assumption,  $ H^0(\Omega^2_S))$ consists of closed holomorphic forms,
and yields a subspace of the De Rham cohomology group $H^2(S, \CC)$. Hence, if $\s \in  \Aut_{B, \QQ}(S) $,
it acts as the identity on this space.

Since $p_g(S)>0$, there is a nonidentically vanishing form $\om \in H^0(\Omega^2_S))$.

Take a point $p \in S$ where $\om (p) \neq 0$, and such that the fibre through $p$ is smooth.

Then we have local coordinates $(t,z)$ at $p$, where $t$ is a local coordinate on $B$,
and $z$ is a uniformizing coordinate for the fibres.

Writing $ \om = \psi (t,z) dt \wedge dz$, since the tangent bundle to the fibre is trivial,
$ \psi (t,z)  =  \Psi (t) $ and since $\s (t,z) = (t, \s_2(t, z)) = (t, \al z + w(t))$,
we see that $$ \s^* (\om ) = \al \ \Psi (t) dt \wedge dz. $$
Since $ \s^* (\om ) = \om$,  and the function $\Psi (t) \neq 0$, then $\al=1$, and $\s$ acts on the general smooth
fibres via a translation.

The same argument shows that $\s$ acts as a translation on the complement of the critical set of $f$.

Hence $\s$ determines a holomorphic section $\tau_*$ of $J(S)$ on the open set $B^*$.

 We use now the hypothesis $\chi(S) >0$ to infer that $S$ admits no holomorphic vector fields,
 therefore, by the theorem of Fujiki \cite{Fuj78}, $\Aut_{B, \QQ}(S) $ is a finite group.
 
 Hence, our section $\tau$ is a torsion section, let us say, of order $k$.
 
 Hence it takes values in the curve $J(S)[k]$, the closure of the torsion subgroups of order $k$ 
 of the fibres of $J(S)$. Then the closure of the graph of $\tau_*$ is not dense, and, 
 by the Theorem of Remmert and Stein, $\tau_*$ extends to a holomorphic section $\tau$ on the whole of $B$.
 
 For the second assertion, we just observe that for a fibre of additive type, the torsion subgroup is trivial, hence
 the torsion order of $\tau$ is $1$, and $\s$ is the identity.
 \end{proof}

We can now state one of our main results (Theorem \ref{main} in the introduction);
it will have immediate applications.

\begin{thm}\label{main'}
Let $ f \colon \sS \rightarrow \sB $ be a smooth family with connected base $\sB$ of relatively minimal elliptic K\"ahler surfaces $S_t, t \in \sB$ having the same Jacobian surface $J(S)$.  

If $p_g(S_t)>0, \chi(S_t)>0$, then 
we have equality of the two   groups $ \Aut_{B, \QQ}(S_t) $ and $ \Aut_{ \QQ}(S_t)$, and 
 $  \Aut_{B, \QQ}(S_t) = \Aut_{ \QQ}(S_t)$ is independent of $t \in \sB$.
\end{thm}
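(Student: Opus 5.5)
The equality $\Aut_{B,\QQ}(S_t)=\Aut_\QQ(S_t)$ is immediate from Proposition \ref{trivialonB}, since $\chi(S_t)>0$ and $p_g(S_t)>0$. So everything reduces to showing that $\Aut_{B,\QQ}(S_t)$ does not depend on $t$. I will do this by identifying $\Aut_{B,\QQ}(S_t)$, for every $t$, with a subgroup of $\MW(J(S))_\tor$ defined purely in terms of data that are constant in the family.

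\textbf{Step 1: an injection into a fixed group.} By Proposition \ref{lem: AutQ vs MW}, every $\s\in\Aut_{B,\QQ}(S_t)$ acts on smooth fibres as translation by a torsion section $\tau\in\MW(J(S))_\tor$. This section is uniquely determined by $\s$, and $\s=T_\tau$ in the notation of Theorem \ref{J-acts}(ii). Since $J(S)$ is the same for all $t$, this gives an injective homomorphism $\Aut_{B,\QQ}(S_t)\hookrightarrow\MW(J(S))_\tor$. The target is a fixed finite group. Conversely, every $\tau\in\MW(J(S))_\tor$ defines an automorphism $T_\tau^{(t)}$ of each $S_t$ over $B$. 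Because $T(S)$ is built fibrewise and the family is smooth, these automorphisms fit together into a fibrewise automorphism of $\sS$ over $\sB$. So it suffices to show that the set
\[
M_t=\{\tau\in\MW(J(S))_\tor : T_\tau^{(t)}\in\Aut_\QQ(S_t)\}
\]
is independent of $t$.

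\textbf{Step 2: local constancy.} Fix $\tau$. The automorphisms $T_\tau^{(t)}$ vary continuously with $t$. Using Ehresmann's theorem, trivialize the family topologically over a small contractible neighbourhood of a point $t_0$; this identifies $H^*(S_t,\QQ)\cong H^*(S_{t_0},\QQ)$. Under this identification the induced actions $(T_\tau^{(t)})^*$ form a continuous family of automorphisms of a fixed lattice, so they are locally constant in $t$. Hence the condition $\tau\in M_t$ is open and closed in $t$. Since $\sB$ is connected, $M_t$ is constant, and so is $\Aut_{B,\QQ}(S_t)\cong M_t$.

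\textbf{Main obstacle.} The delicate point is Step 2, specifically that $T_\tau^{(t)}$ really is a continuous (indeed holomorphic) family of automorphisms of the total space $\sS$. This requires that the meromorphic action map $T(S)$ of Theorem \ref{J-acts} extend holomorphically in families, including near multiple fibres, where the construction goes through Kodaira's covering trick. My first approach would be to apply Theorem \ref{J-acts} to the threefold $\sS\to\sB\times B$. Alternatively, I would use Theorem \ref{T-family}: the $S_t$ lie in a common $T(J,\{t_j\},\{\xi_j\})$, locally glued from the same pieces via translations, and translations commute with $T_\tau$. Relative minimality then removes indeterminacy fibrewise, as in the proof of Theorem \ref{J-acts}(ii).

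A simpler backup, avoiding continuity altogether, is to compare each $S_t$ directly with $J$. For this I would need a criterion for $T_\tau\in\Aut_\QQ(S_t)$ that involves only $J$, the multiple-fibre data and $\tau$, for instance that $\tau$ meets each multiple fibre in the correct components, read off via the Mayer--Vietoris identification $H^*(S_t,\QQ)\cong H^*(J,\QQ)$ of the preceding proposition. Such a criterion would make independence of $t$ manifest.
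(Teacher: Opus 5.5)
Your proposal follows the paper's proof: you embed $\Aut_{B,\QQ}(S_t)$ into the fixed group $\MW(J(S))_\tor$ via Proposition~\ref{lem: AutQ vs MW}, use Ehresmann's theorem and the connectedness of $\sB$ to see that whether a given torsion section $\tau$ induces a numerically trivial automorphism does not depend on $t$, and obtain $\Aut_{B,\QQ}(S_t)=\Aut_\QQ(S_t)$ from Proposition~\ref{trivialonB}. The continuity of $t\mapsto T_\tau^{(t)}$, which you single out as the delicate point and only sketch, is exactly what the paper uses without comment when it invokes Ehresmann, so your version is, if anything, more explicit than the original.
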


\begin{proof}
We have seen in Proposition \ref{lem: AutQ vs MW} 
that, under our assumptions, $ \Aut_{B, \QQ}(S) \subset \mathrm{Im} (\MW(J(S))_\tor).$

Presently, however, the right hand side is  independent of $t \in \sB$.

 Since $\sB$ is connected, all the surfaces $S_t$ are diffeomorphic 
 by Ehresmann's Theorem.

Hence, if a translation $\tau \in \MW(J(S))_\tor$ yields a transformation in $ \Aut_{B, \QQ}(S_t)$
for one $t$, it does so for all $t \in \sB$.
Thus $ \Aut_{B, \QQ}(S_t)$ is independent of $t \in \sB$, and 
the last claim follows from Proposition \ref{trivialonB}.
\end{proof}

\medskip

Using Theorems 1.1, 1.2 of \cite{CLS24}, we deduce from the previous result:

\begin{thm}[Theorem \ref{thm:CLS'}]
\label{thm:CLS}
Assume that $ f \colon S \ra B$ is a K\"ahler elliptic surface with $p_g(S)>0$
(in particular this assumption  holds if $S$ is not  algebraic);
and assume $\chi(S) > 0$.  Then the following hold:

(1) $\Aut_\QQ(S)$ is isomorphic to a subgroup of $\MW(J(S))_\tor$, and, as such, it is a finite 2-generated abelian group
which can be written in the form $ G = \ZZ/d\ZZ \oplus \ZZ/ da\ZZ$.

(2) $\Aut_\QQ(S)$ is trivial  if there is a fibre of additive type
or if there is no multiple fibre with reducible support.

(3) There are upper bounds
\begin{enumerate}
\item[(i)] In terms of the irregularity $q(S)$, there is the global bound 
\begin{eqnarray}
\label{eq:Aut_Q<=}
|\Aut_\Q(S)|\leq 12\pi^2(q(S)+2).
\end{eqnarray}
\item[(ii)]
If $p\mid|\Aut_\Q(S)|$ for some prime $p>5$, then 
$$p_g\geq  \frac{p^2-1}{12}-\frac{p-1}2 \;\;\; \text{
and } \;\;\; \frac{p^2-1}{24}\mid\chi(S).
$$
For $p\in\{2,3,5\}$, there are properly elliptic surfaces $S$  with any given $\chi>1$ or $p_g>0$ such that $p\mid|\Aut_\Q(S)|$.
\item[(iii)]
If $(\ZZ/p\ZZ)^2\subset\Aut_\Q(S)$  for some prime $p\geq 3$,
then 
$$p_g\geq \frac 1{12}(p-3)(p^2-1) \;\;\; \text{ and } \;\;\;
 \frac{p(p^2-1)}{24}\mid\chi(S).
$$
\end{enumerate}

\end{thm}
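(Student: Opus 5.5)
The plan is to reduce everything to the algebraic case treated in \cite{CLS24}, using Theorem \ref{main'} together with Proposition \ref{trivialonB} and Proposition \ref{lem: AutQ vs MW}.

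\textbf{Step 1: the group structure.} By Proposition \ref{trivialonB}, since $p_g(S)>0$ and $\chi(S)>0$, we have $\Aut_\QQ(S)=\Aut_{B,\QQ}(S)$. By Proposition \ref{lem: AutQ vs MW}, this group lies in the image of $\MW(J(S))_\tor$. The map $\tau\mapsto T_\tau$ of Theorem \ref{J-acts}(ii) is injective, because $T_\tau$ acts on a general smooth fibre as translation by $\tau(t)$. Hence $\Aut_\QQ(S)$ is isomorphic to a subgroup of $\MW(J(S))_\tor$. This torsion group embeds into the torsion of a smooth fibre, which is $(\QQ/\ZZ)^2$, so it is finite, abelian and $2$-generated. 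It therefore has the form $\ZZ/d\ZZ\oplus\ZZ/da\ZZ$, and subgroups of such a group are of the same form. This gives (1). The additive-fibre half of (2) is the second assertion of Proposition \ref{lem: AutQ vs MW}.

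\textbf{Step 2: passing to an algebraic surface.} To get the remaining half of (2) and the bounds (3), I would deform $S$ to an algebraic surface with the same Jacobian and the same multiple-fibre data. Theorem \ref{T-family} says that $T(J,\{t_j\},\{\xi_j\})$ is a principal homogeneous space over $\Sha^{an}(J)$. When $\chi>0$ there are singular fibres, so Theorem \ref{tate-shaf}(1) makes $\Sha^{an}(J)$ a connected quotient of $H^2(J,\hol_J)$. We can then join $S$ by a path, that is a smooth family over a connected base with fixed Jacobian, to an element of finite order. Through Kodaira's covering trick and Theorem \ref{tate-shaf}(2), such an element gives an algebraic surface $S_0$. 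Theorem \ref{main'} then gives $\Aut_\QQ(S)=\Aut_\QQ(S_0)$. The invariants $q$, $p_g$, $\chi$ and the fibre types, including multiplicities and supports, are unchanged along this family.

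\textbf{Step 3: importing the algebraic results.} Theorems 1.1 and 1.2 of \cite{CLS24} hold for the algebraic surface $S_0$ with $p_g>0$. They give triviality when no multiple fibre has reducible support, the bound $|\Aut_\QQ|\le 12\pi^2(q+2)$, and the divisibility and $p_g$ bounds for primes $p>5$ and for $(\ZZ/p\ZZ)^2$. Transporting these back to $S$ completes (2) and (3). For the existence clause in (3)(ii) with $p\in\{2,3,5\}$, it is enough to quote the algebraic examples of \cite{CLS24}.

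\textbf{Main obstacle.} The hardest step is Step 2. One has to construct the connecting family as an honest smooth family $\sS\to\sB$ over a connected base, to make sure the multiple fibres and the classes $\xi_j$ are carried along, and to check that the torsion points of $\Sha^{an}(J)$ meet the relevant connected component. I would first take the family over $H^2(J,\hol_J)$ via the surjection in Theorem \ref{tate-shaf}(1), applying the logarithmic transformations uniformly, and then use the density of finite-order points in the image of $H^2(J,\hol_J)$.
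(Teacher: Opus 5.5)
You follow the paper's route. Items (1) and the additive half of (2) come from Propositions~\ref{trivialonB} and~\ref{lem: AutQ vs MW}; the rest comes from moving $S$ inside the connected family $T(J,\{t_j\},\{\xi_j\})$ to an algebraic member, applying Theorem~\ref{main'}, and importing \cite{CLS24}. However, Step 2 as written does not produce that algebraic member. With multiple fibres, $T(J,\{t_j\},\{\xi_j\})$ is only a principal homogeneous space over $\Sha^{an}(J)$, so it has no origin and ``an element of finite order'' has no meaning. Theorem~\ref{tate-shaf}(2) characterizes algebraicity only inside $\Sha^{an}(J)$, that is, for surfaces without multiple fibres. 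Density of torsion points in the image of $H^2(J,\hol_J)$ helps only once you already know that $T$ contains an algebraic point to translate from.

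The paper closes this gap by quoting \cite[Theorem 6.12]{FM94}: algebraic surfaces are dense in $T$. If you want to argue through Kodaira's cover $S'\to S=S'/G$ yourself, you must show that the class of $S'$ in $\Sha^{an}(J(S'))$ can be moved to a torsion class by pull-backs of elements of $\Sha^{an}(J)$. That needs a finite-index input, such as the component group of $\Sha^{an}(J(S'))^G$ being controlled by a finite $H^1(G,\cdot)$; density alone is not enough. Without multiple fibres you need no torsion points at all: $J$ itself lies in the connected group $\Sha^{an}(J)$ and $\Aut_\QQ(J)=\{1\}$, which is exactly how the paper argues.

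Step 3 over-cites \cite{CLS24}. Its Theorem 1.1 gives triviality only when no multiple fibre has \emph{singular} support. Statement (2) here excludes only multiple fibres with \emph{reducible} support, so multiple fibres of type $m\I_1$ are allowed. You must check that the argument of \cite{CLS24} extends to them; the paper remarks that it does. There is also a minor slip in Step 1: an embedding into $(\QQ/\ZZ)^2$ does not by itself give finiteness or two generators, since a subgroup of $(\QQ/\ZZ)^2$ can be infinite. Finiteness comes from Fujiki's theorem, because $\chi(S)>0$ excludes holomorphic vector fields, as in the proof of Proposition~\ref{lem: AutQ vs MW}.
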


\begin{proof} 
 
If $S$ is algebraic, then the results can be found in  \cite[Thm.\ 1.1]{CLS24},
with the only extension that loc.\ cit.\ states the second case of (2) only 
if there is no multiple fibre with singular support.
However, one easily checks that the arguments extend to allow for multiple fibres of type $m\I_1$.

Hence we may assume that $S$ is not algebraic.
Then 

(1) and the statement about the additive fibre from (2) follow from Proposition \ref{lem: AutQ vs MW},
so we turn to the remaining claims.

Kodaira proved  the density of the locus of algebraic elliptic surfaces
in the Kuranishi family of deformations of each elliptic surface $S$ with even first Betti number, see Theorem 7 of \cite{kodstruct-1-2}.
However, we need more specific results.

If $S$ has no multiple fibre, then, by Theorem \ref{tate-shaf},
$S$ lives in the principal homogeneous space $\Sha ^{an} (J(S))$
which at the same time contains its Jacobian $J(S)$.
Hence Theorem \ref{main'} allows us to deduce
\[
\Aut_\QQ(S)=\Aut_\QQ(J(S)) = \{1\}
\]
as stated in (2) from \cite[Thm.\ 1.1 (iii)]{CLS24}.
(3) thus also holds trivially in this case.

If $S$ has a multiple fibre, then 
Theorem \ref{T-family} leads us to consider
the surfaces $S$ with fixed Jacobian $J$ and with multiple fibres of fixed positions and orders.

 Precisely, we have that $T (S, \{t_j\}, \{\xi_j\})$ is a principal homogeneous space for 
 $  \Sha ^{an} (J) =   H^1 (B,\hol_B( J^* ))=   H^1 (B, \sG) , $
 which is a connected manifold. This gives a  smooth 
 family with connected base $\sB$ containing our given surface $S$,
 to which we apply Theorem \ref{main'}.
 
 It turns out, as stated in Theorem 6.12 of \cite{FM94}, that the algebraic surfaces 
 are dense in $\sB$, compatibly with  (2) of Theorem \ref{tate-shaf}.

 Hence (2) follows from  \cite[Thm.\ 1.1 (iii)]{CLS24} applied to any algebraic surface $S_t$ for some $t\in\sB$,
 since $\Aut_\QQ(S)=\Aut_\QQ(S_t)$ by Theorem \ref{main'}.
 For the same reason, the bounds from Theorem 1.2 of \cite{CLS24} carry over as recorded in (3). 
\end{proof}

\begin{remark}
It also follows from the above arguments that for any $d$ and for any squarefree $a$,
there are non-algebraic K\"ahler elliptic surfaces $S$ with
$\Aut_\QQ(S) =  \ZZ/d\ZZ \oplus \ZZ/ da\ZZ$.
\end{remark}

\section{The non-K\"ahler case}

If instead $b_1(S)$ is odd and $P_{12} (S) >0$, then either $S$ is a Kodaira surface 
(and has algebraic dimension equal to $1$) or is an elliptic quasi-bundle.

 Recall that a primary Kodaira surface has trivial $K_S$, $b_1(S)=3$, and is an elliptic fibration over an elliptic curve.
A secondary Kodaira surface has a primary Kodaira surface as a finite unramified covering, has 
$b_1(S)=1$ and $p_g=0$.

The only remaining non-K\"ahler elliptic surfaces are
the elliptic Hopf surfaces
(their first Betti numbers are odd, $b_1(S) =1$ or $=3 $ for them)
 to which we turn now.

\subsection{Automorphisms of   Hopf surfaces}

Assume that $S$ is a primary Hopf surface: then $S = \CC^2 \setminus \{0\}/ \ZZ$ is diffeomorphic to
$S^1 \times S^3$, hence the cohomology algebra 
$$ H^*(S, \ZZ) = (\ZZ [x_1]/ (x_1^2) ) \wedge ( \ZZ [x_3]/ ( x_3^2)),$$
where $x_j$ has degree $j$.

Hence  all automorphisms are numerically trivial, and actually act as  the identity on  $H^*(S, \ZZ) $,
so we continue to calculate $\Aut(S)$.

The group of covering transformations is generated by a contraction
$$ \phi (z_1, z_2) = (\al_1 z_1 + \la z_2^m, 
\al_2 z_2), \ {\rm with} \ 0 < |\al_1| \leq  | \al_2|  < 1, \  (\al_1 -  \al_2^m)\la =0.$$

If $g \in \Aut(S)$, then $g$ lifts to a holomorphic automorphism $\psi$ of $\CC^2$
with $\psi(0)= 0$ and functional equation
$$ \psi \circ \phi = \phi \circ \psi \;\;\; \text{ or } \;\;\;  \psi \circ \phi = \phi^{-1} \circ \psi.
$$

 We rule out now the second functional equation: because $\phi$ is a contraction, in fact
$ lim_{n \ra + \infty} \phi^n (z) =0, \ \forall z.$ 

In contrast, $ lim_{n \ra + \infty} \phi^{-n} (z) = \infty, \ \forall z.$ 

Now, if $\psi \circ \phi \circ \psi ^{-1}= \phi^{-1},$ then $ lim_{n \ra + \infty} \phi^{-n} (z) = \psi(0)$,
an obvious contradiction.
Hence it remains to consider the first functional equation.

Let us consider now the case $\la =0$, and look at the first functional equation
$$ (\psi_1 (\al_1 z_1,  \al_2 z_2),  \psi_2 (\al_1 z_1,  \al_2 z_2)) = (\al_1 \psi_1(z_1, z_2) , \al_2 \psi_2 (z_1, z_2)) .$$
Now $ \psi_1 (\al_1 z_1,  \al_2 z_2) = \al_1 \psi_1(z_1, z_2)$; writing $\psi_1$ as a power series
$$ \psi_1(z_1, z_2) = \sum_{n,m} c_{n,m} z_1^n z_2 ^m ,$$
amounts to 
$$\al_1 c_{n,m} = \al_1^n \al_2 ^m c_{n,m}   \Leftrightarrow c_{n,m} \al_1 (1 - \al_1^{n-1} \al_2 ^m ) =0,$$ for all $n,m$.

Since $\alpha_1\neq 0$, the coefficients $c_{n,m} $ are  zero unless $ n=1, m=0$ because 
otherwise $ |\al_1^{n-1} \al_2 ^m | < 1$. The same argument for $\psi_2$  with coefficients $c'_{nm}$ yields
$$ \psi (z) = ( c_{1,0} z_1, c'_{0,1} z_2).$$

This shows that 

\begin{prop}\label{hopf0}
For a primary Hopf surface $S$ with $\la=0$, in particular for a primary elliptic 
Hopf surface, $\Aut(S) = \Aut^0(S) = (\CC^* \times \CC^*) / \ZZ$.

\end{prop}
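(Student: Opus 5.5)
The plan follows the computation carried out just before the statement. Every automorphism $g$ of $S = (\CC^2\setminus\{0\})/\langle\phi\rangle$ lifts to the universal cover $\CC^2\setminus\{0\}$, which is simply connected. By Hartogs' theorem the lift extends to a biholomorphism $\psi$ of $\CC^2$ with $\psi(0)=0$. The lift normalizes the deck group $\langle \phi\rangle\cong\ZZ$, so $\psi\circ\phi\circ\psi^{-1}=\phi^{\pm1}$.

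\emph{Step 1: ruling out $\phi^{-1}$.} This is done by the contraction argument already given: conjugating by $\psi$ would force $\phi^{-n}(z)\to\psi(0)=0$, whereas in fact $\phi^{-n}(z)\to\infty$. Hence $\psi$ commutes with $\phi$.

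\emph{Step 2: solving the commutation equation.} With $\la=0$ we have $\phi=\mathrm{diag}(\al_1,\al_2)$ and $0<|\al_1|\le|\al_2|<1$. Comparing coefficients of the power series of $\psi_1$ gives $c_{n,m}(\al_1-\al_1^n\al_2^m)=0$. One must check that $\al_1^{n-1}\al_2^m\neq 1$ for all $(n,m)\neq(1,0)$ with $n,m\ge 0$; this holds because the modulus is $<1$ whenever $n\ge1$ and $(n-1)+m\ge 1$.

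The case $n=0$, i.e.\ the monomials $z_2^m$ in $\psi_1$, needs extra care, since here $\al_1^{-1}\al_2^m$ can have modulus larger than $1$ or even equal to $1$. The correct condition to test is $\al_1=\al_2^m$. For $m=0$ it fails since $\al_1\neq 1$. For $m\ge1$ it can hold, and this is exactly the resonant case in which $\la\ne0$ would be allowed. In that case I would argue as follows: a term $c z_2^m$ in $\psi_1$ is itself an automorphism commuting with $\phi$, of the form $(z_1,z_2)\mapsto(c_{1,0}z_1+cz_2^m,\ c'_{0,1}z_2)$. So either the statement should be read as covering the non-resonant situation, or this contribution is absorbed into $\Aut^0$.

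The same analysis for $\psi_2$ uses $|\al_2|\ge|\al_1|$. Here $\al_2=\al_1^n\al_2^m$ is impossible for $(n,m)\neq(0,1)$: if $m\ge1$ and $(n,m)\ne(0,1)$ the modulus of the right-hand side is strictly smaller, and if $m=0$ then $|\al_1^n|\le|\al_2|^n<|\al_2|$ for $n\ge2$, while $n=1$ would give $\al_1=\al_2$, in which case the terms $z_1$ and $z_2$ mix and $\psi$ can be any element of $GL(2,\CC)$.

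I expect this bookkeeping of the resonances to be the main obstacle. In the generic case I would conclude that $\psi=\mathrm{diag}(c,c')$ with $c,c'\in\CC^*$.

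\emph{Step 3: identifying $\Aut(S)$.} The diagonal torus $\CC^*\times\CC^*$ commutes with $\phi$ and descends to $S$. Its elements inducing the identity on $S$ are precisely the lifts of the identity, namely the powers of $\phi$, i.e.\ $\ZZ$. Therefore $\Aut(S)=(\CC^*\times\CC^*)/\ZZ$, which is connected; hence it equals $\Aut^0(S)$. The statement about elliptic Hopf surfaces follows since these satisfy $\la=0$ with $\al_1^p=\al_2^q$. In the case $\al_1=\al_2$ one obtains $GL(2,\CC)/\ZZ$ instead, still connected, so $\Aut=\Aut^0$ persists; I would note this refinement in the write-up.
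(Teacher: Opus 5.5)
Your route is the paper's: lift to $\CC^2$ via Hartogs, exclude $\psi\phi\psi^{-1}=\phi^{-1}$ by the contraction argument, and compare power-series coefficients. Your resonance bookkeeping is correct, and it exposes a slip in the paper's own proof. The paper discards every $c_{n,m}$ with $(n,m)\neq(1,0)$ on the grounds that $|\al_1^{n-1}\al_2^m|<1$, which fails for $n=0$. Its ``same argument'' for $\psi_2$ also misses the $z_1$-term that survives when $\al_1=\al_2$.

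You should not hedge (``read as non-resonant, or absorbed into $\Aut^0$''). In the resonant cases the identification with $(\CC^*\times\CC^*)/\ZZ$ is simply false, so no argument can prove the statement as written.
\begin{itemize}
\item If $\al_1=\al_2$, then $\Aut(S)=GL(2,\CC)/\ZZ$, of dimension $4$.
\item If $\al_1=\al_2^m$ with $m\ge 2$, the centralizer of $\phi$ is $\{(cz_1+dz_2^m,\,c'z_2): c,c'\in\CC^*,\ d\in\CC\}$. So $\Aut(S)$ has dimension $3$ and is non-abelian; for instance $(z_1+z_2^m,z_2)$ and $(2z_1,z_2)$ do not commute.
\end{itemize}
Both cases are elliptic Hopf surfaces, since $\al_1=\al_2^m$ is $\al_1^p=\al_2^q$ with $p=1$. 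The first case includes the classical surface $(\CC^2\setminus\{0\})/\langle \al\,\id\rangle$. So your remark in Step 3 that the elliptic case ``follows'' is wrong precisely there.

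What your computation actually proves, and what you should state, is two things. First, for $\la=0$ the centralizer of $\phi$ is always connected, hence $\Aut(S)=\Aut^0(S)$. Second, $\Aut(S)\cong(\CC^*\times\CC^*)/\ZZ$ if and only if $\al_1\neq\al_2^m$ for every $m\ge 1$. The same correction applies to the ``abelian of dimension $2$'' claim in Theorem \ref{thm:Hopf}.
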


\begin{proof}
$\ZZ$ is the subgroup generated by  $\phi$ in the  group $\CC^* \times \CC^*$, 
$$\{ (\al_1^k, \al_2^k)| k \in \ZZ\}.$$
\end{proof}

In the  case $\la \neq 0$ the first functional equation yields 
$$  \psi ( \phi (z)) = \psi (\al ^m z_1 + \la z_2^m, \al z_2) = (\al ^m \psi_1 + \la \psi_2^m, \al \psi_2),
$$
hence $$  \psi_2 (\al ^m z_1 + \la z_2^m, \al z_2) = \al \psi_2 (z_1, z_2)
$$
and $\frac{ \partial \psi_2} { \partial z_1}$ satisfies the functional equation 
$$ \frac{ \partial \psi_2} { \partial z_1} (\phi (z)) = \al^{1-m} \frac{ \partial \psi_2} { \partial z_1}(z),$$


From the above two equations it follows that 
$$ \frac{ \partial \psi_2} { \partial z_1}(z)  \psi_2 (z)^{m-1}$$
is $\phi$-periodic, hence it is a constant.

Therefore, by integration, there is $c \in \CC$ such that

$$\psi_2 (z)^m = c z_1 + d (z_2).$$

Write now $\psi_2$ as a power series in $z_1$, and define $g$ as follows:
$$\psi_2 = \sum_{j=0}^{\infty} a_j (z_2) z_1^j , \ g : = \sum_{j=1}^{\infty} a_j (z_2) z_1^{j-1} \Rightarrow  \psi_2 = a_0 + z_1 g.$$

Equating the two power series,
$$ c z_1 + d (z_2) = (a_0 + z_1 g) ^m = a_0^m + m a_0^{m-1}z_1 g + z_1^2   {m \choose{2}}a_0^{m-2} a_1^2 + z_1^3 (...)$$

we first  infer that 

$$ d = a_0^m, \ c = m a_0^{m-1} a_1, \ {m \choose{2}}a_0^{m-2} a_1 = 0.$$

The latter equality yields three possibilities

\begin{enumerate}
\item
$m=1$, 
\item
$a_0 \equiv 0$,
\item
$a_1 \equiv 0$.
\end{enumerate}

In case (1), assuming $m=1$, then $a_0 = d, \ g  = c$, hence $\psi_2 = a_0 + c z_1$. 

We must have  $\al \psi_2 = \psi_2 (\al z_1 + \la z_2 , \al z_2)$, that is,
$$ \al (a_0(z_2) + c z_1) = c (\al z_1 + \la z_2) + a_0 (\al z_2) \Leftrightarrow \al a_0(z_2) = c \la z_2 + a_0 (\al z_2) .$$
Expanding $a_0= \sum_k c_k z_2^k$ as a power series of $z_2$, the last equation implies that $a_0 = c_1 z_2$, and 
$ \al c_1 + c \la-  \al c_1 =0 \Rightarrow c=0$, hence finally $$\psi_2 = c_1 z_2.$$

In case (2), assuming $a_0 \equiv 0$ and $ m \geq 2$, then $c=0, d \equiv 0$, hence $$\psi_2 \equiv 0.$$

Finally, in case (3), we may assume $a_1 \equiv 0$ and $ m \geq 2$.
Then we claim that all $a_j \equiv 0$, for $ j \geq 1$.
This can be seen by induction, taking the first $a_j$ which is not $\equiv 0$.

Because then, modulo terms of order at least $j+1$,
$$ \psi_2^m \sim  a_0^m + m z_1^j a_j \sim c z_1 + d (z_2) \Rightarrow a_j \equiv 0,$$
a contradiction.

Whence, $\psi_2 = a_0$, and $c=0$, and we conclude again that $\psi_2 = c_1 z_2.$

\smallskip

We return to the functional equation, which reads out now as

$$  \psi_1 ( \phi (z)) = \psi_1 (\al ^m z_1 + \la z_2^m, \al z_2) = \al ^m \psi_1 + \la c_1^m z_2^m,
$$
hence 
 $$ \frac{ \partial \psi_1} { \partial z_1}(z)  $$
is $\phi$-periodic, hence it is a constant.

Integrating, we get $$  \psi_1  = c z_1 + d (z_2),$$
and then the functional equation yields

$$  d( \al z_2) = \al ^m  d (z_2)  + \la ( c_1^m -c) z_2^m,
$$
and writing $d (z_2)$ as a power series of $z_2$, $d (z_2) = \sum_j \be_j z_2^j$,

We obtain that $\be_j=0$, except for $j=m$, where we must have 
$ c = c_1^m$, and we conclude that 

$$ \psi_2 = c_1 z_2, \  \psi_1  = c_1^m z_1 + \be_m z_2^m.$$

It follows then:

\begin{prop}\label{hopf1}
For a primary Hopf surface $S$ with $\la \neq 0$,   $\Aut(S) = \Aut^0(S) = (\CC \times \CC^*) / \ZZ$.

\end{prop}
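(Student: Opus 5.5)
The computation just carried out already gives the shape of every lift: any $g\in\Aut(S)$ lifts to $\psi$ with $\psi\circ\phi=\phi\circ\psi$ (the inverse relation was excluded), and then necessarily
$$\psi(z_1,z_2)=(c_1^m z_1+\be_m z_2^m,\; c_1 z_2),\qquad c_1\in\CC^*,\ \be_m\in\CC.$$
To prove Proposition \ref{hopf1} I would first check the converse: every map of this form commutes with $\phi(z)=(\al^m z_1+\la z_2^m,\al z_2)$. This is a direct substitution. Both compositions have second coordinate $\al c_1 z_2$. In the first coordinate, both give $\al^m c_1^m z_1+(\al^m\be_m+\la c_1^m)z_2^m$. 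Hence every such $\psi$ descends to an automorphism of $S$.

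Next I would identify the group structure. Let $H=\{\psi_{(c_1,\be_m)}\}$. Composition gives
$$\psi_{(c,\be)}\circ\psi_{(c',\be')}=\psi_{(cc',\,c^m\be'+\be c'^m)}.$$
Setting $u=\be/c^m$, this becomes $(c,u)\cdot(c',u')=(cc',u+u')$, so $H\cong \CC\times\CC^*$ is abelian. The covering transformation $\phi$ is itself in $H$, with $c_1=\al$ and $\be_m=\la$. Two lifts induce the same automorphism of $S$ exactly when they differ by a deck transformation $\phi^k$, so
$$\Aut(S)=H/\langle\phi\rangle\cong(\CC\times\CC^*)/\ZZ.$$
The subgroup $\langle\phi\rangle$ is infinite cyclic and discrete, because $|\al|<1$.

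Finally, $H$ is a connected complex Lie group acting holomorphically, so its image in $\Aut(S)$ is connected. Hence $\Aut(S)=\Aut^0(S)$.

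The only delicate point is that the lifting and functional-equation analysis above assumed $\psi(0)=0$ and that $\psi$ extends across the origin. I would justify this by Hartogs: $\psi$ is first defined on $\CC^2\setminus\{0\}$ as the lift to the universal cover, and extends holomorphically to $\CC^2$. Applying the same to $\psi^{-1}$ shows the extension is biholomorphic, so $0$ must go to $0$. With that settled, the earlier case analysis covers all automorphisms, and the statement follows.
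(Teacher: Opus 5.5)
Your proposal is correct and follows essentially the same route as the paper. Both take the normal form $\psi(z)=(c_1^m z_1+\be_m z_2^m,\, c_1 z_2)$ from the preceding functional-equation analysis, identify these maps as an abelian group isomorphic to $\CC\times\CC^*$ that contains $\phi$, and conclude that every such map commutes with $\phi$ and descends to $S$. You additionally make explicit three points the paper leaves implicit: the group law and the substitution $u=\be_m/c_1^m$, the connectedness argument giving $\Aut(S)=\Aut^0(S)$, and the Hartogs argument showing that lifts extend to biholomorphisms of $\CC^2$ fixing the origin.
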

\begin{proof}

Our calculations have shown that $$\psi (z) = (c_1^m z_1 + \be_m z_2^m,  c_1 z_2).$$ 

These transformations, when invertible, yield a Lie group which is 
an extension    $\CC \rtimes \CC^*$, and an easy calculation shows that it is $\CC \times \CC^*$.

Whence, every such transformation normalizes $\phi$, which is an element of this group.
\end{proof}

Kodaira proved in Theorem 32 of \cite{kodairastructure2} 
that every secondary Hopf surface $S$ is a quotient of a primary Hopf surface $\Sigma$
by a finite cyclic group $H$ of order $l$ 
generated by a transformation of the form 
$$ (z_1, z_2) \mapsto (\e_1 z_1, \e_2 z_2), \  {\rm with} \ (\e_1 - \e_2^m) \la =0,$$
with $\e_1, \e_2 \in \mu_l = \{ \e \in \CC| \e^l = 1\}$. 

It follows immediately  that 
\begin{thm}
\label{thm:Hopf}
For Hopf surfaces $\Aut(S) = \Aut^0( S) $ and it is an Abelian Lie group of dimension $2$.

For a primary Hopf surface 
$$\Aut(S) = \Aut^0( S) \cong 
\begin{cases}
(\CC \times \CC^*) / (\ZZ ), & {\rm for }\; \la \neq 0,\\
(\CC^* \times \CC^*) / (\ZZ), & {\rm for } \;\la = 0. 
\end{cases}
$$

For a secondary Hopf surface 
$$\Aut(S) = \Aut^0( S) \cong 
\begin{cases}
(\CC \times \CC^*) / (\ZZ \times (\ZZ/l)), & {\rm for }\; \la \neq 0,\\
(\CC^* \times \CC^*) / (\ZZ \times (\ZZ/l)), & {\rm for } \;\la = 0. 
\end{cases}
$$
\end{thm}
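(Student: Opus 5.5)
The plan is to deduce the theorem from Propositions \ref{hopf0} and \ref{hopf1} together with Kodaira's description of secondary Hopf surfaces as quotients $S=\Sigma/H$. The primary case is already done: those propositions give $\Aut(\Sigma)=\Aut^0(\Sigma)$, equal to $(\CC^*\times\CC^*)/\ZZ$ or $(\CC\times\CC^*)/\ZZ$ according to whether $\la=0$ or $\la\ne0$. Here $\ZZ$ is generated by $\phi$. Both groups are abelian and $2$-dimensional. For $\la\neq 0$, abelianness rests on the computation that the group of maps $(z_1,z_2)\mapsto(c_1^m z_1+\be z_2^m, c_1 z_2)$ is isomorphic to $\CC\times\CC^*$.

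For a secondary Hopf surface $S=\Sigma/H$, with $H$ cyclic of order $l$ generated by $h=(\e_1 z_1,\e_2 z_2)$, I will work with the universal cover $\CC^2\setminus\{0\}$. The group $\Gamma=\pi_1(S)$ is generated by $\phi$ and $h$. Any $g\in\Aut(S)$ lifts to a biholomorphism $\psi$ of $\CC^2\setminus\{0\}$, which extends to $\CC^2$ with $\psi(0)=0$ by Hartogs, and $\psi$ normalizes $\Gamma$. Since $\Gamma$ is abelian and finite-by-$\ZZ$, $\psi\phi\psi^{-1}=\phi^{\pm1}h^j$. I rule out the minus sign as in the primary case: $\phi^{-1}h^j$ is expanding, because $h$ has finite order and commutes with $\phi$, so its forward iterates tend to $\infty$, whereas conjugates of a contraction are contractions.

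So $\psi\phi\psi^{-1}=\phi h^j$. Raising to the power $l$ gives $\psi\phi^l\psi^{-1}=\phi^l$. Thus $\psi$ descends to an automorphism of the primary Hopf surface $\CC^2\setminus\{0\}/\langle\phi^l\rangle$, whose contraction $\phi^l$ has the same shape (same $m$, and $\la=0$ iff $\la=0$ for $\phi$). By the computation preceding Propositions \ref{hopf0} and \ref{hopf1}, $\psi$ then lies in the group $G$ of diagonal maps (if $\la=0$), resp. of maps $(c^m z_1+\be z_2^m, c z_2)$ (if $\la\ne0$). This $G$ is abelian and contains $\phi$ and $h$; for $h$ this uses the condition $(\e_1-\e_2^m)\la=0$. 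Hence every element of $G$ commutes with $\Gamma$ and descends to $S$, so $\Aut(S)=G/\Gamma$. Here $\Gamma\cong\ZZ\times\ZZ/l$ whenever $\langle\phi\rangle\cap H$ is trivial, which I expect to follow from $|\al_i|<1$ versus $|\e_i|=1$. The quotient $G/\Gamma$ is a connected abelian Lie group of dimension $2$, so $\Aut(S)=\Aut^0(S)$. Numerical triviality is automatic, since $S$ has the rational cohomology of $S^1\times S^3$.

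The main obstacle is justifying the reduction to $\phi^l$ cleanly. One point is that the earlier case analysis for $\la\ne0$ was written for a contraction of the specific form with parameters $\al^m,\al$; I must check that $\phi^l$ is again of that form, up to replacing $\la$ by a nonzero multiple. If that is awkward, I would instead rerun the power-series argument directly with the twisted equation $\psi\circ\phi=\phi\circ h^j\circ\psi$. The finite-order factor $h^j$ only multiplies the coefficients by roots of unity, and the same modulus argument kills all the extra terms.
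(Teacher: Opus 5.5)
The step that fails is your appeal to the computation before Propositions~\ref{hopf0} and~\ref{hopf1} to conclude that $\psi$ is diagonal when $\la=0$. Your primary case, imported from Proposition~\ref{hopf0}, rests on the same computation. That computation overlooks resonances. From $\psi_1\circ\phi=\al_1\psi_1$ one gets $c_{n,k}(\al_1^n\al_2^k-\al_1)=0$. The claimed bound $|\al_1^{n-1}\al_2^k|<1$ is false for $n=0$, so $c_{0,k}$ may be nonzero when $\al_1=\al_2^k$. Similarly $c'_{1,0}$ survives when $\al_1=\al_2$.

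For $\phi=\al\,\id$, every element of $GL(2,\CC)$ commutes with $\phi$. Hence $\Aut(S)=GL(2,\CC)/\langle\al\,\id\rangle$, which is non-abelian of dimension $4$. For $\al_1=\al_2^k$ with $k\ge 2$, the maps $(az_1+\be z_2^k,cz_2)$ with $a,c\in\CC^*$, $\be\in\CC$ form a non-abelian $3$-dimensional group. So the statement itself fails for resonant primary Hopf surfaces with $\la=0$, and no argument can close this step.

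Your treatment of $\la\neq 0$ is fine. There $\phi^l=((\al^l)^m z_1+l\la\al^{(l-1)m}z_2^m,\al^l z_2)$ has the same shape. Its centralizer really is $G=\{(c^m z_1+\be z_2^m,cz_2)\}\cong\CC\times\CC^*$, and $h\in G$ since $\e_1=\e_2^m$. Your argument then correctly gives $\Aut(S)=G/\Gamma$.

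In the secondary case with $\la=0$ there is a second, independent problem. The power $\phi^l$ can be resonant although $\phi$ is not. So replacing $\psi\phi\psi^{-1}=\phi h^j$ by $\psi\phi^l\psi^{-1}=\phi^l$ discards exactly the information you need. Your fallback does not rescue this. In $c_{n,k}(\al_1^n\al_2^k-\e\al_1)=0$, with $\e$ a root of unity, the modulus argument says nothing whenever $|\al_1^n\al_2^k|=|\al_1|$.

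Concretely, take $\phi=\mathrm{diag}(-\al,\al)$ (non-resonant, $\la=0$), $h=-\id$, $l=2$, and $\sigma(z_1,z_2)=(z_2,z_1)$. Then $\sigma\phi\sigma^{-1}=\phi h$ and $\sigma h\sigma^{-1}=h$, so $\sigma$ descends to $S$. But every element of $\Aut^0(S)$ has a lift centralizing $\Gamma$, and such a lift is diagonal because $\phi$ is non-resonant. Hence $\sigma\notin\Aut^0(S)$, and $\Aut(S)$ is neither connected nor abelian.

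This $\sigma$ does not lift to $\Sigma$. So it also refutes the paper's shortcut for the secondary case, namely that $\Sigma\to S$ is characteristic and hence $\Aut(S)=N_H/H$ inside the abelian group $\Aut(\Sigma)$. Your universal-cover bookkeeping of $\psi\phi\psi^{-1}=\phi^{\pm1}h^j$ is the more careful route; it just needs the correct centralizer.

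What your method does prove is the theorem for $\la\neq 0$. It also proves it for $\la=0$ under the non-resonance condition $\al_1^l\neq\al_2^{kl}$ for all $k\ge 1$ (take $l=1$ in the primary case). Under that condition the centralizer of $\phi^l$ is the diagonal torus, which is abelian and contains $\Gamma$, and the rest of your argument goes through unchanged.
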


\begin{proof}

For primary Hopf surfaces, we simply reproduced the statements of Propositions \ref{hopf0} and \ref{hopf1}.

For a secondary Hopf surface $S = \Sigma/H$, the unramified covering $\Sigma \ra S$ is characteristic, hence $\Aut(S)$ is the quotient 
$N_H / H$ of the normalizer $N_H$
in $\Aut(\Sigma)$ of the subgroup $H \cong (\ZZ/l)$. 

But  we have just seen that $\Aut(\Sigma)$ is abelian, whence $N_H / H \cong \Aut(\Sigma)/H$.
\end{proof}

\subsection{Automorphisms of Kodaira surfaces}
Kodaira  proved in \cite{kodairastructure1}  the following characterization of the so called
primary Kodaira surfaces:

\begin{thm}\label{primary}
Let $S$ be a non-K\"ahler surface with trivial canonical divisor.

Then its universal covering is $\CC^2$ and $S = \CC^2 /\Ga$,
where $\Ga$ is a group of affine tranformations of the form
$$ \ga(z_1, z_2) =   (z_1 + \alpha, z_2 + \bar{\alpha} z_1 + \beta).$$
The discontinuous group $\Ga$ acts freely and has four generators $\ga_1, \ga_2, \ga_3, \ga_4$
such that 
$$ \alpha_1 =  \alpha_2 =  0, \ \bar{\alpha_3} \alpha_4 -  \bar{\alpha_4} \alpha_3 =  m \beta_2 \neq 0,$$
for some $m$  $\in \NN$, 
and $\beta_1, \beta_2$ are $\RR$-linearly independent (and same for $\alpha_3, \alpha_4$ in view of the previous formula).

The projection $(z_1, z_2) \mapsto z_1$ induces an elliptic bundle over the elliptic curve
$B : = \CC / (\ZZ \alpha_3 + \ZZ \alpha_4)$ with fibre the elliptic curve
$E : = \CC / (\ZZ \beta_1 + \ZZ \beta_2)$. This fibration is the Albanese map.
\end{thm}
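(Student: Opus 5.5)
The plan is to follow Kodaira's classical route: first show that $S$ has a holomorphically trivial tangent bundle, then identify the universal cover with an affine action on $\CC^2$. Since $K_S$ is trivial, $c_1(S)=0$ and $\chi(\hol_S)=0$. Noether's formula then gives $e(S)=0$. Non-K\"ahlerness forces $b_1(S)$ odd, and since $b_1\le 2q\le b_1+1$, $p_g=1$ and $\chi=0$, we get $q=2$ and $b_1=3$, hence $b_2=4$. In particular $S$ is not a Hopf surface, and by the Enriques--Kodaira classification recalled in the introduction it has algebraic dimension $1$. So $S$ carries an elliptic fibration $f\colon S\ra B$. From the canonical bundle formula \eqref{eq: canonical bundle formula} with $K_S$ trivial and $\deg L=\chi=0$, there are no multiple fibres and $B$ has genus $1$. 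Since $e(S)=0$, Zeuthen--Segre says all fibres are smooth, so $f$ is an elliptic bundle. Triviality of $K_S$ and of $\sR^1f_*\hol_S$ rules out the non-principal (hyperelliptic) case of Theorem \ref{bundle} iv). Non-K\"ahlerness rules out the torus, so $S$ is a principal bundle with $c(\xi)\ne 0$, i.e.\ a primary Kodaira surface.

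Next, produce two global holomorphic vector fields. The first is the fibrewise translation field $\partial/\partial z_2$, coming from the principal $E$-action. The second is obtained from the $1$-form $f^*dz_1$ and the nowhere vanishing $2$-form $\omega$, defining $v$ by $\iota_v\omega = f^*dz_1$ up to sign. Together they trivialize $T_S$ and generate a simply transitive action of a $2$-dimensional complex Lie group on $\widetilde S$. Using $b_1=3$, I would show that $H^0(\Omega^1_S)$ is $1$-dimensional, spanned by $f^*dz_1$, while $H^0(d\hol)$ also contains a closed non-holomorphic piece. From this I will deduce that the vector fields commute, so the group is $\CC^2$. The universal cover is then $\CC^2$, and $\pi_1(S)=\Ga$ acts by biholomorphisms preserving the fibration $z_1$ and the forms $dz_1$ and $\omega=dz_1\wedge dz_2$. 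Hence each $\ga$ has the shape $(z_1,z_2)\mapsto(z_1+\al,\,z_2+h_\ga(z_1))$ with $h_\ga$ holomorphic.

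The main obstacle is forcing $h_\ga$ to be affine of the special form $\bar\al z_1+\be$, after normalization. My approach is to identify $h_\ga$ with a cocycle of the $E$-bundle over $B$. Starting from \eqref{pb}, one may replace the cocycle by a factor of automorphy for the line bundle of degree $c(\xi)$ on $B$. The classical theta-type choice of this factor is $\bar\al z_1+\be$, possibly after rescaling coordinates and changing $z_2$ by a function of $z_1$. The fibre translations $\ga_1,\ga_2$ have $\al=0$. The commutator $[\ga_3,\ga_4]$ is a translation by $\bar\al_3\al_4-\bar\al_4\al_3$, and it must lie in the lattice $\ZZ\be_1+\ZZ\be_2$. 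Nonvanishing of $c(\xi)$ gives $m\beta_2\neq 0$ after a choice of basis. Freeness and discreteness give the $\RR$-independence statements. Finally, the Albanese claim follows because $H^0(\Omega^1_S)=\CC\,dz_1$ and $q=2$ with $b_1=3$: the Albanese torus is $1$-dimensional and equals $B$.
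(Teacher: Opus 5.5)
The paper does not prove this statement; it quotes it from Kodaira \cite{kodairastructure1}. Your proposal therefore has to stand on its own, and its second paragraph contains a genuine error. The field $v$ defined by $\iota_v\om=f^*dz_1$ is not a second field. Evaluating on $v$ gives $(f^*dz_1)(v)=\om(v,v)=0$, so $v$ is vertical and hence a constant multiple of the fibre-translation field; in the final coordinates, $\iota_{\partial/\partial z_2}(dz_1\wedge dz_2)=-dz_1$. No other choice can work either. Since $K_S$ is trivial, $\Theta_S\cong\Omega^1_S\otimes K_S^{-1}\cong\Omega^1_S$, so $h^0(\Theta_S)=h^{1,0}(S)=1$ by your own computation, and $T_S$ is not holomorphically trivial. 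In fact the conclusion you are heading for is false. Two commuting, pointwise independent global holomorphic vector fields on a compact surface generate a transitive $\CC^2$-action with discrete stabilizer, which would make $S$ a complex torus, hence K\"ahler. On $\widetilde S=\CC^2$ the field $\partial/\partial z_1$ does not descend, because $\ga_*(\partial/\partial z_1)=\partial/\partial z_1+\bar\al\,\partial/\partial z_2$; this is exactly why $\Aut^0(S)\cong E$ is only one-dimensional.

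What you actually need comes from the bundle structure, not from a parallelization. Pulled back to the universal cover $\CC\ra B$, the principal $E$-bundle is trivial, since $H^1(\CC,\hol)=0=H^2(\CC,\Lam_E)$ in the analogue of \eqref{pb}. So $\widetilde S\cong\CC^2$, and each deck transformation covers a translation of $B$ and commutes with the lift of the fibre-translation flow, which is defined on $S$. Hence each has the form $(z_1+\al,\,z_2+h_\ga(z_1))$.

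Your normalization step must then be made precise, since $c(\xi)\in H^2(B,\Lam_E)\cong\Lam_E$ is not a degree. The required facts are these:
\begin{itemize}
\item The cocycles $\al\mapsto h_\al \bmod \Lam_E$, modulo the coboundaries $g(z_1+\al)-g(z_1)$ produced by $z_2\mapsto z_2+g(z_1)$, compute $H^1(B,\sE_B)$.
\item The explicit cocycles $\kappa\bar\al z_1+\be(\al)$ exist precisely when $\kappa(\bar\al_3\al_4-\bar\al_4\al_3)\in\Lam_E$, and they realize every nonzero Chern class by a choice of $\kappa\in\CC^*$.
\item Twisting by homomorphisms $\Lam_B\ra\CC$ exhausts $\ker c$, because $H^1(B,\CC)\ra H^1(B,\hol_B)$ is onto.
\item Rescaling $z_2$ then gives $\kappa=1$.
\end{itemize}
With this in place, your commutator computation and your Albanese argument are fine.

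Finally, your opening claim that $\chi(\hol_S)=0$ follows from $K_S$ trivial is false as stated (K3 surfaces are a counterexample). For $b_1$ odd the numerics alone still allow $q=1$, $b_1=1$, $e(S)=12$. The fix is to get the elliptic fibration first: $S$ is in class $(VI_0)$, since $P_{12}=1$. Then $f_*\hol_S(K_S)\cong\hol_B(K_B+L)\cong\hol_B$ gives $\chi=2-2b$ and shows there are no multiple fibres. The case $b=0$ would force $q=0$, contradicting $b_1$ odd, so $b=1$ and $\chi=0$.
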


 Andrea Cattaneo, \cite{cattaneo} has shown that the group $\Aut(S) $
 lifts to a group of affine transformations of $\CC^2$, and has given a precise description of $\Aut(S)$,
 which we shall now explain   in (6) and (7).
 
\begin{enumerate}
\item
By elementary covering spaces theory, the group $\Aut(S) $
is the quotient $N_{\Ga} /\Ga$, where $N_{\Ga} $ is the normalizer 
of $\Ga$ in the group of biholomorphisms of $\CC^2$.
\item
The fibre bundle homotopy exact sequence is
$$ 1 \ra \Lam_E : = \langle \ga_1, \ga_2\rangle \cong \ZZ^2 \ra \pi_1(S) \ra \Lam_B \cong \ZZ^2 \ra 1,$$
and taking the  Abelianization of $\pi_1(S)$ we get the exact sequence
$$ 0 \ra \ZZ \ga_1 \oplus (\ZZ/m)\ga_2 \ra H_1(S, \ZZ) \ra \Lam_B \cong \ZZ^2 \ra 0.$$
\item
$\Aut(S) $ contains $E$, acting by the translations 
$$ (z_1, z_2) \mapsto (z_1, z_2 + w_2) ,$$ which commute with $\Ga$; indeed 
$E \subset \Aut(S) $ is equal to  $\Aut^0(S)$, the connected component of the identity.
\item
The lifts of transformations in $\Aut(S)$  consist of affine transformations with linear part which is upper triangular,
and diagonal entries $(\e, 1)$ where $\e \in \mu_r$, $r=2,4,6$ according to
$B$ being a general curve, or a harmonic  (Gaussian) elliptic curve, or an equianharmonic (Fermat) elliptic curve.
 \item
 There is an exact sequence 
 $$ 0 \ra  \sK \ra \Aut(S) / \Aut^0(S) \cong \Aut(S) / E \ra \mu_r \ra 1,$$
which is indeed a semidirect product, and 
 where $\sK$ is represented by  transformations of the form
 $$ (z_1,z_2) \mapsto ( z_1 + w_1, z_2  + w_2 z_1 + \s),$$
 with $$w_2 \al_i - w_1 \overline{\al_i } \in \Lambda_E, \ i=3,4.$$
 \item
 The above formula says that  $w = (w_1, w_2)$ is such that 
 $ A w \in \Lambda_E \oplus \Lambda_E$, where the $(2\times 2)$ matrix $$A =\begin{pmatrix}
 -\bar\alpha_3 & \alpha_3\\-\bar\alpha_4 & \alpha_4
 \end{pmatrix}$$ has determinant $- m \be_2 \neq0$.
 
 We obtain that these vectors $w$ are in the  inverse image of the lattice $\Lambda_E \oplus \Lambda_E$.
 
  Observing that  $\Lambda_B$
 is contained in this inverse image, as the set of vectors $(\al, \overline{\al })$,
 with $\al \in \Lambda_B$, and this set  maps onto the subgroup $ \ZZ m \be_2 \oplus  \ZZ m \be_2$,
 we  get a description of $\sK$ as follows:
 \item
 $$\sK \cong (\Lambda_E/ \ZZ m \be_2)^2 \cong \ZZ^2 \oplus (\ZZ/m)^2 \cong Hom(\Lam_B , \ZZ \oplus (\ZZ/m)). $$
\end{enumerate}

Hence we derive the following:

\begin{thm}
\label{thm:1st}
For a primary Kodaira surface $S$,  $\Aut^0(S) \cong E$, where $E$ is the elliptic curve
fibre of the Albanese map $ f \colon S \ra B$.

Moreover,   $\Aut(S) / \Aut^0(S) $ maps onto $\mu_r$, where $\mu_r$ is the multiplication group of the 
elliptic curve $B$, with Kernel $\sK  \cong Hom(\Lam_B , \ZZ \oplus (\ZZ/m)) \cong \ZZ^2 \oplus (\ZZ/m)^2$.

$\Aut_{\ZZ}(S) =  \Aut^0(S) $, while  $\Aut_{\QQ}(S) / \Aut^0(S) $ corresponds to  the subgroup of $\sK$  which acts trivially on $H^1(S, \CC)$,
 and is isomorphic  to 
  $Hom (\Lambda_B ,   (\ZZ/m)) \cong (\ZZ/m)^2.$

\end{thm}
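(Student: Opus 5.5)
The plan is to use the description (1)--(7) of $\Aut(S)=N_\Ga/\Ga$ and to compute the action of each piece on $H^*(S,\ZZ)$ and $H^*(S,\QQ)$. First, $\Aut^0(S)\cong E$ consists of the fibre translations $(z_1,z_2)\mapsto(z_1,z_2+w_2)$. These are homotopic to the identity, so $\Aut^0(S)\subset\Aut_\ZZ(S)$. Next, an element whose linear part has diagonal entry $\e\neq1$ acts on $B$ by $z_1\mapsto\e z_1+\dots$. Since $f$ is the Albanese map, the action on $H^1(S,\CC)\supset f^*H^1(B,\CC)$ restricts to multiplication by $\e$ (and $\bar\e$) on $f^*H^{1,0}(B)$. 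This is nontrivial, so numerically trivial automorphisms have image $1$ in $\mu_r$, i.e. $\Aut_\QQ(S)/\Aut^0(S)\subset\sK$.

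Second, I would compute the action of $k\in\sK$, lifted as $(z_1,z_2)\mapsto(z_1+w_1,z_2+w_2z_1+\s)$, on $\pi_1(S)=\Ga$ by conjugation. A direct computation gives $\ga_i\mapsto\ga_i$ for $i=1,2$ (fibre translations commute with this map). For $i=3,4$ we get $\ga_i\mapsto\ga_i\circ\tau_{\lambda_i}$, where $\tau_{\lambda_i}$ is the fibre translation by $\lambda_i=w_2\al_i-w_1\bar\al_i\in\Lam_E$. So the action on $H_1(S,\ZZ)$ is the identity plus the homomorphism $\Lam_B\to\ZZ\ga_1\oplus(\ZZ/m)\ga_2$ given by $\al_i\mapsto[\lambda_i]$. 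Under the identification (7), this is exactly the element of $Hom(\Lam_B,\ZZ\oplus\ZZ/m)$ attached to $k$. Hence $k$ acts trivially on $H_1(S,\QQ)$ iff the $\ZZ$-component vanishes, i.e. $k\in Hom(\Lam_B,\ZZ/m)\cong(\ZZ/m)^2$. It acts trivially on $H_1(S,\ZZ)$ iff $k=0$, i.e. $k\in\Aut^0(S)$.

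Third, for such $k$ with vanishing $\ZZ$-part I need triviality on all of $H^*(S,\QQ)$. By Poincaré duality on the compact oriented $4$-manifold $S$ ($k$ is holomorphic, hence orientation preserving), triviality on $H^1,H^3$ follows from $H^1$. For $H^2(S,\QQ)$, I would observe that $k$ has finite order modulo $\Aut^0(S)$: $m k$ is a fibre translation composed with an element of $\Ga$, hence homotopic to the identity. I would then use the Leray/Hochschild--Serre spectral sequence for the torus bundle $f$. It has $E_2^{p,q}=H^p(B,R^qf_*\QQ)$ with trivial local system $R^1f_*\QQ$ after tensoring with $\QQ$ (monodromy unipotent, $\QQ$-trivial on $\ga_1$). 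Since $k$ covers a translation of $B$ and acts on fibres by translation, it acts trivially on each $E_2^{p,q}$, and so unipotently on $H^2(S,\QQ)$. A finite-order unipotent operator is the identity. Alternatively, $e(S)=0$ and Lefschetz-type arguments are not needed.

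The main obstacle is the second step: carefully checking the conjugation formula and matching it with the identification (7) of $\sK$, including the torsion $\ZZ/m\ga_2$ arising from $[\ga_3,\ga_4]=\ga_2^m$. This is what produces exactly $(\ZZ/m)^2$ rather than a smaller group.
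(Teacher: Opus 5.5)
Your proposal is correct. For $H^1$ (hence $H^3$) and for $\Aut_{\ZZ}(S)=\Aut^0(S)$ it coincides with the paper. Conjugating $\Ga$ by a lift of $k\in\sK$ gives $\ga_i\mapsto\ga_i\circ\tau_{\lambda_i}$. The resulting homomorphism $\Lam_B\to\Lam_E/\ZZ m\be_2\cong\ZZ\oplus\ZZ/m$ is exactly the element of $\sK$ under (7). You also make explicit a point the paper leaves tacit: elements with $\e\neq 1$ already act by $\e$ on $f^*H^{1,0}(B)$, so they are not numerically trivial.

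The genuine difference is the treatment of $H^2(S,\QQ)$. The paper uses the diffeomorphism $S\cong\Sigma\times S^1$, the K\"unneth splitting $H^2(S,\CC)=H^1(B,\CC)\wedge dx_1\oplus H^2(\Sigma,\CC)$ and the Gysin sequence of $\Sigma\to B$, and checks the action summand by summand. You instead argue in three steps:
\begin{itemize}
\item every $k\in\sK$ acts trivially on the $E_2$-page of the Leray spectral sequence of $f$, hence unipotently on $H^*(S,\QQ)$;
\item the elements with vanishing $\ZZ$-part satisfy $k^m\in\Aut^0(S)$;
\item a unipotent operator of finite order is the identity.
\end{itemize}
Your route needs no explicit basis of $H^2$. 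It also never requires $k$ to respect the product structure $\Sigma\times S^1$. That would need some care, since $k$ acts by $z_2\mapsto z_2+w_2z_1+\s$ and is not literally a translation in $z_2$. The paper's route, in exchange, produces explicit generators of $H^1$ and $H^2$. Combined with your $H_1$ computation, your argument identifies $\Aut_\QQ(S)/\Aut^0(S)$ with the torsion subgroup of $\sK$ in one stroke.

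One correction to your justification. $R^1f_*\QQ$ is trivial because $f$ is a principal bundle with connected structure group $E$: all of $\Ga$ commutes with $\ga_1,\ga_2$, so the monodromy is already trivial over $\ZZ$. It is not because ``unipotent monodromy becomes trivial over $\QQ$''; a nontrivial unipotent monodromy would stay nontrivial rationally. This triviality, together with the fact that $k$ commutes with the fibrewise $E$-action, is precisely what makes $k$ act as $\tau_{w_1}^*\otimes\id$, hence as the identity, on $E_2^{p,q}=H^p(B,\QQ)\otimes H^q(E,\QQ)$.
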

\begin{proof}
We first see the action of the transformations 
$$ (z_1,z_2) \mapsto ( z_1 + w_1, z_2 + w_2 z_1)$$
on $\Ga = \pi_1(S).$ 

The inverse transformation is  $ (z_1,z_2) \mapsto ( z_1 - w_1, z_2 - w_2 z_1 + w_1 w_2)$,
hence conjugating $ \ga (z_1,z_2) =   (z_1 + \alpha, z_2 + \bar{\alpha} z_1 + \beta)$
we get $$   (z_1  + \alpha, z_2  + \bar{\al} z_1 + \beta   + \bar{\al}  w_1  - w_2  \al  ),$$
with $ \bar{\al}  w_1  - w_2  \al \in \Lambda_E$.

Hence we get a homomorphism $\Lambda_B \cong \pi_1(S)/ \Lambda_E \ra \Lambda_E$.

The action on $\Ga$ is therefore trivial iff this homomorphism is trivial, that is, if and only if $w_1=w_2=0$.

Also the action on the first homology group $H_1(S, \ZZ)$ yields a homomorphism $\Lambda_B  \ra  \ZZ \oplus (\ZZ/m),$
hence also the action on the first integral homology is never trivial, unless $w_1=w_2=0$.
This proves that $\Aut_{\ZZ}(S) =  \Aut^0(S) $.

The action on $H_1(S, \QQ)$ is trivial provided we get a homomorphism in $Hom (\Lambda_B ,   (\ZZ/m)) \cong (\ZZ/m)^2.$

Now, since $e(S)=0$, $b_2(S) = 4$ hence $H^2(S, \CC) $ has dimension $4$, as explained in Proposition 5.3, page 145
of \cite{bpv}.

 In fact, $S$ is diffeomorphic to $\Sigma \times S^1$, where $ \phi  \colon \Sigma \ra B $
is a circle bundle and the product structure is obtained writing $ z_2 = x_1 \be_1 + x_2 \be_2$, 
$ x_1 \in \RR/ \ZZ \cong S^1.$

As shown by Kodaira page 787 of \cite{kodairastructure1}, 
 $H^1(S, \CC) $ is generated by $dz_1, \overline{dz_1}, dx_1$ and  $dz_1 \wedge \overline{dz_1}$
 is an exact form.
 
 Hence $H^2(S, \CC) $, by the K\"unneth formula, is generated by 
 \begin{eqnarray}
 \label{eq:first_summand}
 \CC  dz_1 \wedge dx_1 \oplus \CC  \overline{dz_1} \wedge dx_1= H^1(B, \CC) \wedge dx_1,
 \end{eqnarray}
 and by $H^2(\Sigma, \CC) $.
 
 For the second summand, it appears,  after tensoring with $\CC$, in the Gysin sequence
 $$ 0 \ra H^1(B, \ZZ) \ra H^1(\Sigma, \ZZ) \ra H^0(B, \ZZ)\ra H^2(B, \ZZ)\ra H^2(\Sigma, \ZZ) \ra H^1(B, \ZZ)\ra0.$$
 Since our  transformations act on $B$ via translations, they act trivially on the
 cohomology groups involving $B$, and preserve the Chern class yielding the coboundary map.
 Hence they act trivially on this second summand $H^2(\Sigma, \CC) $.
 
 For the action on the first summand,  i.e.\ the one given in \eqref{eq:first_summand}, it suffices, again using the trivial action on $H^1(B,\CC)$, that they act trivially on $dx_1$.
 This follows directly from the action on $z_2$ as a translation.
 
 The conclusion is that  $\Aut_{\QQ}(S) / \Aut^0(S) $ is isomorphic to the subgroup of $\sK$
 acting trivially on $H^1(S, \CC)$, which is  isomorphic to   
 $$\Hom (\Lambda_B ,   (\ZZ/m)) \cong (\ZZ/m)^2$$
 as stated.
\end{proof}

We consider next secondary Kodaira surfaces $S'$, which are the quotient of a primary Kodaira surface $S$
by a cyclic finite group $H$ of order $n$ dividing $r$  where, as before, $\Aut(B) = B \rtimes \mu_r$. 

In fact, the cover $ S \ra S'$
is the {\bf canonical cover}, that is, the canonical divisor class $[K_{S'}]$ is a torsion class of order $n$
and $S$  is the cyclic covering, inside the total space of  the line bundle $\hol_{S'}(K_{S'})$,  associated to a nowhere vanishing section
of  $H^0 (\hol_{S'}(n K_{S'}))$. 

Hence any automorphism of $S'$ lifts to $S$.

It is easy to see, see \cite{cattaneo}, 
that the order $n$  of the  cyclic finite group $H$ divides  $r$, since the elements of $H$ act 
nontrivially on the differential form $ d z_1 \wedge d z_2$.

$H$ is  generated by a transformation of the form 
$$ \ga (z_1, z_2) = (\e z_1 + w_1, z_2 + w_2  z_1 + \s  ), \;\;\;  \ \e \in \mu_n.
$$

We consider the  normalizer $N_H$ of $H$ inside $\Aut(S)$; observe first that $E = \Aut^0(S)$,
 acting on $z_2$ by translations,
and $\ga$ commute,  hence $E = \Aut^0(S)$ descends to $E=  \Aut^0(S')$.

The elements  of the quotient $ \Aut(S)/\Aut^0(S)$ are represented by automorphisms of the form
$$  g (z_1, z_2) = (\e_g z_1 + w_1(g), z_2 + w_2(g)  z_1)  ), \ \e_g \in \mu_r,$$

We shall work modulo $E$ and, in the special case $ n = r$,  we observe that since obviously $\ga$ centralizes $\ga$,
 we just need to look at the elements in $\sK$.

  The conditions of normalizing $\ga$ 
  boil down to one of the two conditions 
  
  \begin{enumerate}
  \item[(i)]
   $ \ga \circ g = g \circ \ga $ or
  \item[(ii)]
   $ \ga \circ g \circ \ga = g $,
  \end{enumerate}
  where (ii) is superfluous if $n=2$.
  
  Let us look at condition (i) for any $g$, setting for convenience $ \de : =  \e_g, \ u_j : =  w_j(g)$:
  $$ 
  \ga \circ g  = ( \e \de z_1 + \e u_1 + w_1, z_2 +   z_1 (u_2 + w_2 \de)+  w_2  u_1 + \s)$$
  should be equal to
  $$ g \circ \ga = (\de \e z_1 + \de w_1 + u_1, z_2 +  z_1 (w_2 +  u_2 \e )  + \s  + u_2 w_1).$$
  
  Instead condition (ii) cannot be verified if $n \neq 2$, since 
  $$ \ga \circ g \circ \ga = (\e \de \e  z_1 + \dots , \dots), \ \ g  = ( \de z_1 + \dots , \dots).$$
  
  Hence we just need to look at condition (i), which  holds in $\CC^2$ if and only if 
  $$ ( 1 - \e ) u_1  = ( 1 - \de)  w_1, \ {\rm and } \ ( 1 - \e ) u_2  = ( 1 - \de)  w_2, {\rm and } \ w_2  u_1 = u_2   w_1.$$
  
  Whereas (i) holds on $S$ if and only if 
  $$ ( 1 - \e ) u_1  - ( 1 - \de)  w_1 \in  \Lam_B , $$
  
  $$\ ( 1 - \e ) u_2  - ( 1 - \de)  w_2 = \overline
  {( 1 - \e ) u_1  - ( 1 - \de)  w_1 } = ( 1 - \bar\e ) u_1  - ( 1 - \bar\de)  w_1,$$
  
  and finally  $$\ w_2  u_1 = u_2 w_1 .$$
  
  To simplify things we look at the case $ g \in \sK$, that is, $\de=1$.
  In this particular case we have the conditions:
  
  $$ ( 1 - \e ) u_1  \in  \Lam_B , \;\;\; u_2 = \bar{u_1}, \;\;\; w_2  u_1 = u_2 w_1 .$$

 The first  equation gives us in general a finite group  isomorphic to $\{ [z_1] \in B | (\e-1) z_1=0\}$, which 
 has cardinality $ N =   4, 3, 2,1$ for $n=2,3, 4,6$ and is respectively isomorphic to 
  $$ T= (\ZZ/2)^2,\;  \ZZ/3, \;  \ZZ/2, \; \{ 1 \}.$$
 
 If we however intersect with the subgroup $\sK$, we must add the conditions 
 
 \begin{equation}\label{cat}  \overline { u_1   } \al_i - u_1  \bar\al_i \in \Lam_E.
 \end{equation}

 In the case $ n < r$, look then at  the  first two conditions: 
 $$ ( 1 - \e ) u_1  - ( 1 - \de)  w_1 \in  \Lam_B  , \;\; 
 (\de-1) w_2 -  (\e-1) u_2 =  (\bar\de-1) w_1 -  (\bar\e-1) u_1 .$$
 
 If we view the first equation as a linear equation in $u_1$, this equation is solvable, 
 by the surjectivity of the multiplication by $(\e-1)$ on $B$,
  similarly for  the second equation there is a solution $u_2$:
  hence the number of solutions, for fixed $\de$,
 is either $0$ or equal to the number of solutions for the associated linear homogeneous equation. 

Our group is then generated by the solutions with $\de=1$, i.e., those in $\sK$, and one solution 
 with $\de$ a primitive $r'$-th root of unity, with $r'$ divisible by $n$ and dividing $r$.
\bigskip

Dividing by the group generated by $
\ga$ we get a  semi direct product $ T \rtimes (\ZZ/ (r'/n))$ of respective subgroups $T$
 with a cyclic group of order $r'/n$ dividing $r/n$.
 
  We get, respectively for     $ n = 2,3,4,6$, using \eqref{cat},
 
 \begin{itemize}
 \item
 $ T= (\ZZ/2)^2$ for $n=2$, $m$ even,
  \item
 $ T= (\ZZ/2)$ for $n=2$, $m$ odd,
 \item
 $ T= (\ZZ/3$ for $n=3$, $m$ divisible by $3$,
 \item
  $T=  \ZZ/2, $ for $n=4$. 
 \item
 $ T=  \{ 1 \}$ in the remaining cases, $n=6$, $n=3$, $m$ coprime to 3.
 \end{itemize}

 In fact, for $n=2$,  \eqref{cat} and the equations for $\sK$, 
 yield, for $\de_j \in \{0,1\}$, 
 $$ \frac{1}{2} (\de_3 \al_3 + \de_4 \al_4 ) \bar\al_i  -  \frac{1}{2} \overline{(\de_3 \al_3 + \de_4 \al_4 )} \al_i=  \frac{1}{2} (\de_3 + \de_4) m \be_2
 \in \Lam_E \Leftrightarrow  \frac{1}{2} (\de_3 + \de_4) m  \in \ZZ.
 $$ 
 
 For $n=3$ the element $w_1 = \frac{1}{3} (1 - \e)$ yields a solution iff  $3$ divides $m$.
 
 For $n=4$ the element $w_1 = \frac{1}{2} (1 + i)$ yields a solution.

 Hence
 
 \begin{thm}
 \label{thm:2nd}
 For a secondary Kodaira surface $S' = S/( \ZZ/n)$, where $S$ is a primary Kodaira surface
 whose   Albanese variety $Alb(S)$ is an elliptic curve $B$ with  $\Aut(B)\cong B \rtimes \mu_r$, 
 we have $ n | r$.
 
 And  there is an elliptic curve 
 $E$ such that  $\Aut(S')^0 \cong E$, and $\Aut(S')/ \Aut(S')^0 $ is a finite group
 of the form 
 $$
 \Aut(S')/ \Aut(S')^0 \cong 
 \begin{cases}
 (\ZZ/2)^2  \rtimes (\ZZ/ (r'/2)) & \text{ for } n=2, \ $m$ \ even, { \ (r'/2) | (r/2)};\\
  \ZZ/3  \rtimes (\ZZ/ (r'/3) ) & \text{ for } n=3, \ $m$ \ divisible \ by \ 3, \ \ { (r'/3) | (r/3)};\\
    \ZZ/2  & \text{ for } n=4,  \text{ and } n=2, m  \text{ odd } ;\\
      \{ 1 \} & \text{ for the remaining cases}.
 \end{cases}
 $$
 Here,  either $r'=r$ or $ (r'/n)=1$.

  Moreover, $\Aut(S') = \Aut_{\QQ}(S')$.
 \end{thm}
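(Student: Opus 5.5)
The plan is to assemble Theorem \ref{thm:2nd} from the computations just carried out, in four steps.

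First, $n\mid r$. Since $S\to S'$ is the canonical cover, the generator $\gamma$ of $H$ acts on $dz_1\wedge dz_2$ by its linear coefficient $\epsilon$ in the $z_1$-direction. By item (4) of Cattaneo's description this $\epsilon$ lies in $\mu_r$, and it has order exactly $n$ because $K_{S'}$ has order $n$. Next, every automorphism of $S'$ lifts to $S$, so $\Aut(S')=N_H/H$, where $N_H$ is the normalizer of $H$ in $\Aut(S)$. The translations $E=\Aut^0(S)$ in $z_2$ commute with $\gamma$. They therefore descend. Their image in $S'$ is again an elliptic curve, namely $E$ modulo the finite subgroup $H\cap E$, which is trivial or finite, so the quotient is isogenous to $E$. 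This gives $\Aut(S')^0\cong E$ for a suitable elliptic curve $E$.

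Second, compute $N_H/(H\cdot E)$. Modulo $E$, elements are of the form $g=(\delta z_1+u_1,\,z_2+u_2z_1)$. For $n\neq 2$ the condition $\gamma g\gamma=g$ is impossible, as shown above, so the normalizing condition reduces to commutation (i). I then split by $\delta$.

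For $\delta=1$, i.e.\ $g\in\sK$, the conditions are $(1-\epsilon)u_1\in\Lambda_B$, $u_2=\bar u_1$, $w_2u_1=u_2w_1$, together with \eqref{cat}. This yields the subgroup $T$. Its size follows from the explicit checks: for $n=2$ the half-periods satisfy \eqref{cat} according to the parity of $m$; for $n=3$ it depends on $3\mid m$; for $n=4$ one gets $\frac12(1+i)$; for $n=6$, or $n=3$ with $3\nmid m$, the group is trivial. I would also have to check that $\gamma$ itself is absorbed in this count, i.e.\ that no element of $\sK$ lies in $H$ beyond the identity modulo $E$.

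For $\delta\neq1$, the solution set is a coset of the homogeneous one, hence either empty or a translate of $T$. The admissible $\delta$ form a subgroup $\mu_{r'}\supset\mu_n$ with $r'\mid r$. After dividing by $\langle\gamma\rangle$, which accounts for $\mu_n$, this gives $T\rtimes\ZZ/(r'/n)$.

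The claim "either $r'=r$ or $r'=n$" holds because $r\in\{2,4,6\}$ and $n\mid r'\mid r$. The only non-automatic case is $r=6$, $n=2$, where $r'=6$ would need $(z_1\mapsto\omega z_1)$ to commute. I would check this directly, or note that the divisibility options already force the dichotomy. In the cases $n=4$ and $n=6$ one has $r=n$, so the extension part is trivial. This yields the table.

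Third, show $\Aut(S')=\Aut_\QQ(S')$. I would show that $H^*(S',\QQ)=H^*(S,\QQ)^H$ is killed by everything except the trivial action, by computing it directly. For a secondary Kodaira surface $b_1=1$ and $e=0$, so $b_2=0$, and $b_3=b_4-\ldots$ gives $b_3=1$. Hence $H^*(S',\QQ)$ has Betti numbers $(1,1,0,1,1)$. The class in $H^1$ comes from the fibre direction $dx_1$, since $dz_1$ and $d\bar z_1$ are not $\gamma$-invariant. Every $g$ above acts on $z_2$ by $z_2\mapsto z_2+u_2z_1+\text{const}$, so it fixes $dx_1$ modulo exact or non-invariant terms; one needs $u_2z_1$ to contribute nothing to the real part in the invariant cohomology. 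Moreover $g$ preserves orientation and $H^3$ is dual to $H^1$, so the action is trivial in all degrees.

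The main obstacle I expect is this last verification that $dx_1$ is fixed. I would handle it via the lifted action on $H_1(S',\QQ)$: the class is the image of the fibre loop $\gamma_1$, and since $g$ acts trivially on $\Lambda_E$, it fixes $\gamma_1$. A secondary delicate point is the precise count of $T$ when intersecting with \eqref{cat}.
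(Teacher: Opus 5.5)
Your outline follows the paper's own route. The steps are the same: canonical cover and $\Aut(S')=N_H/H$, descent of $E$, reduction of the normalizing condition to commutation, the split according to $\delta$, the divisibility argument for $r'$, and $b_2(S')=0$ plus invariance of the fibre class for $\Aut(S')=\Aut_\QQ(S')$. The $r'$ dichotomy is automatic in every case, including $r=6$, $n=2$, since $4\nmid 6$. Your homological version of the last step is correct and slightly more complete than the paper's remark on $dx_1$: every lift has $z_2$-coefficient $1$, so it centralizes $\Lambda_E$ and fixes the fibre loop, and $H^3$ follows by Poincar\'e duality.

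The genuine gap is in the determination of $T$. You list the third condition $w_2u_1=u_2w_1$, i.e.\ the $z_2$-constant term of $\gamma\circ g=g\circ\gamma$. But your count uses only $(1-\epsilon)u_1\in\Lambda_B$, $u_2=\bar u_1$ and \eqref{cat}, so it gives at most an upper bound for $T$. This is the first of the two points the paper's proof addresses. It rescales $u_1$ to $ku_1$, $k\in\{2,3\}$, so that $\hat g=(z_1+ku_1,\,z_2+k\bar u_1z_1)$ lies over $\Lambda_B$ in $\pi_1(S)$. It then uses that $\gamma$ normalizes $\pi_1(S)$ and acts on $\Lambda_B$ through $\epsilon$.

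When you fill this in, be careful: the condition is not automatic. On $S$, commuting means $[\gamma,g]\in\Gamma$. The $z_1$-part and the linear part give $(\epsilon-1)u_1\in\Lambda_B$ and $u_2=\bar u_1$. The constant term, however, must agree modulo $\Lambda_E$ with the $z_2$-constant of the element of $\Gamma$ lying over $(\epsilon-1)u_1$. This congruence involves $\beta_3,\beta_4,\sigma$, and it does not survive the rescaling $u_1\mapsto ku_1$. For instance, take $n=2$, $m$ even, and change coordinates affinely so that $\gamma=(-z_1,\,z_2+\sigma)$. Then $(z_1+\tfrac12\alpha_3,\ z_2+\tfrac12\bar\alpha_3 z_1)$ commutes with $\gamma$ on $S$ if and only if $\beta_3-\tfrac12|\alpha_3|^2\in\Lambda_E$. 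By contrast, $\gamma\Gamma\gamma^{-1}=\Gamma$ only forces this number into $\tfrac12\Lambda_E$, and both cases occur with $\gamma$ acting freely.

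Likewise, \eqref{cat} must be imposed for $i=3$ and $i=4$ separately. For $n=2$ it gives $\tfrac12\delta_4m\in\ZZ$ and $\tfrac12\delta_3m\in\ZZ$, not a single condition on $\delta_3+\delta_4$. For $n=4$, with $\alpha_4=i\alpha_3$, the point $u_1=\tfrac12(1+i)\alpha_3$ gives $\pm\tfrac12 m\beta_2$, so it needs $m$ even. Carried out honestly, these checks do not reproduce every entry of the table; for example, for $n=2$, $m$ odd, $r=2$ no nonzero half-period survives. So this step cannot simply be quoted from the discussion preceding the theorem.
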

 
 \begin{proof}
Everything has been proven above,
 except for the validity of the third equation
 and the last assertion:  $\Aut(S') = \Aut_{\QQ}(S')$.
 
 The third equation, $\bar{u_1}w_1 =  w_2  u_1  $ holds clearly for $u_1$ if and only if
 it holds for an integer  multiple of $u_1$, $k u_1$.
 
 Now, in each case, there is $ k \in \{2,3\}$ such that the transformation 
 $$ \hat{g} = (z_1 + k u_1, z_2 + k \bar{u_1} z_1)$$
 belongs to the subgroup $\Lambda_B$ in the canonical semi-direct product 
 $$ \pi_1(S) = \Lambda_E \rtimes \Lambda_B.$$
 
 Now, $\ga$ normalizes $ \pi_1(S) $ and the characteristic semi-direct product.
 Hence it suffices to show that $\ga$ commutes with $\hat{g}$.
 
 But the action of $\ga$ on $\Lambda_B$ by conjugation is given by $\e$,
 and since $k u_1$ is a fixed vector for $\e$, the desired commutation holds true, 
 and the third equation is verified.
 
 Observe moreover that the conditions
 $$ (r'/2) \  \text{ divides } \ (r/2) \in \{1,2,3\}, \;\;\; (r'/3) \ \text{ divides } \ (r/3) \in \{1,2 \}$$
 mean: either $r'=r$ or $ (r'/n)=1$.

 Finally, observe that a Kodaira surface $S$ has $e(S)=0$, and Kodaira proved that for a secondary 
 Kodaira surface $ b_1(S') =1$, hence $ b_2(S')=0$.
 
 Since $ H^1(S', \QQ) = H^1(S, \QQ)^\ga$, in our previous notation, a generator 
 is given by $dx_1$. This generator is left fixed because our transformations act as a translation
 on $z_2$.
 \end{proof}

  \begin{remark}
  Theorem \ref{thm:2nd} leaves open two alternatives in the case $ r =4$ or $r=6$, amounting to
  the existence or non existence of a solution $g$ with $\de$ a primitive  $r$-th root of unity.

  Andrea Cattaneo has pointed out that both alternatives may occur, so let us briefly
  comment on the calculations needed for a complete determination  of the alternative which is occurring.
  
  By the exact sequence (5) there is $\ga_0$ such that $ \ga = \ga_0^h k$, with $ hn=r$, and $ k\in \sK$;
  and the question is whether there is $ g = \ga_0 k'$,  $ k' \in \sK$, such that $\ga$ and $g$ commute.
  
  The equation 
  $$ \ga_0 k' \ga_0^h k = \ga_0^h k \ga_0 k'  \; \Longleftrightarrow \; 
  \ga_0^{-h} k' \ga_0^h k (k')^{-1} =  \ga_0^{-1}  k \ga_0 $$
  and since $\sK$ is Abelian, is equivalent to 
  $$ (Ad (\ga_0) - \id) k' = (Ad (\ga_0^h) - \id)k  \; \Longleftrightarrow \;  (\zeta -1) k' = (\zeta^h-1)k,$$
  where $\zeta$ is a primitive $r$-th root of unity.
  
  It is an equation for $k$; for $r=4, n=2$, it says that $(i-1) k' = -2 k$.

   \end{remark}

 \begin{remark}
If we consider more generally a minimal surface $S$ of class $(VII_0)$, we have again 
$ b_1(S)=1=  q (S), \ b_2(S)= - K^2_S$.

In the case  $b_2(S)=0$,  $\Aut_{\QQ}(S)$ is then the  kernel 
of the homomorphism $\Aut(S) \ra \{ \pm 1\}$ given by the action on $ H^1(S, \ZZ)$,
hence it has index at most 2.

The surfaces of class $(VII_0)$ and with $b_2(S)=0$ are either the Hopf surfaces
or the Inoue surfaces. 

Kodaira proved, page 699 of \cite{{kodairastructure2}}, that if $alg(S)=0,  b_1(S)=1,
b_2(S)=0$ and $S$ contains at least one curve, then $S$ is a Hopf surface.

With the same numerical characters, if $S$ contains no curves, then $S$ is an Inoue surface
(\cite{inoue}, \cite{bogomolov}, \cite{teleman}).

\end{remark}

\section{Elliptic bundles and quasi-bundles}

We assume now that $f \colon S \ra B$ is an elliptic quasi-bundle, and that it has a pull back
via $ B' \ra B = B' / G$ which is
an elliptic bundle  $f' \colon S' \ra B'$ where $B'$ has genus $b' \geq 2$.

We can also assume, by replacing possibly $B'$ with a finite covering space, that 
$f' \colon S' \ra B'$ is a principal bundle, with fibre $E$, classified by a cocycle in 
 $ H^1(\sE_{B'}) $. Recall that, representing $ E = \CC/ \Lam$, we have the cohomology exact sequence
 \eqref{pb}
$$ 
H^1(B, \Lam) \ra H^1(B, \hol_B) \ra H^1(\sE_B) \stackrel{c}\ra  H^2(B, \Lam) \ra 0,
$$
 where $\ker(c)$ is the subgroup of principal bundles whose
cocycle $\xi$ is  locally constant. 
 
 We recall now some elementary considerations made in \cite{CFGLS24}.

The  universal covering of $S$ is 
 here $$q : \HH \times \CC \ra S =(\HH\times\CC)/ \Gamma,$$
since the pull back of the bundle to the universal covering $\HH$ of $B'$ is isomorphic a product $\HH\times E$,
$\HH$ being Stein and contractible.

In a similar fashion we have a normal subgroup $ \Ga' < \Ga$ such that $\Ga/ \Ga' \cong G$ and 
$$q' : \HH \times \CC \ra S' =(\HH\times\CC)/ \Gamma' \ra S =(\HH\times\CC)/ \Gamma,$$
and our previous remark shows that $\Ga'$ contains a normal subgroup $\pi_E \cong \Lam$ acting  
trivially on $\HH$ and by translations on $\CC$.

Elementary covering spaces  theory now shows that each self-map $\psi : S \ra S$ lifts to the universal covering

As in  \cite[page 316]{topmethods}, we consider the cyclic group $H$ generated by an automorphism $\psi$,
and we consider the group  of lifts of elements of $H$, namely
$$ H' : = \{ \psi' : \HH \times \CC \ra \HH \times \CC\mid  \exists\,  \psi \in H  \text{ such that }  \psi' \circ q = q \circ \psi\}.$$

There is a short  exact sequence 
$$ 1 \ra \Ga \ra H' \ra H \ra 1.$$
 
There is a homomorphism $$ H \ra {\Aut(\Ga)}
 / \Inn (\Ga)$$ induced by conjugation by a lift $\psi'$ of $\psi$.
The fact that this action is defined only up to inner conjugation amounts to the fact that changing the base point
 $x_0$ to $y_0$  we get an isomorphism of fundamental groups $\pi_1(S, x_0) \cong \pi_1 (S, y_0)$ which is only
defined up to inner conjugation.

In particular, the group $\Aut(S)$ is  the quotient
\begin{equation}\label{univ-cover} \Aut(S) =\widetilde \sN_{\Ga} / \Ga, \end{equation}
where $\widetilde\sN_{\Ga}$ is the normalizer of $\Ga$ inside $\Aut( \HH \times \CC)$.

 Indeed any lift  $\psi'$ of an automorphism $\psi\in \Aut(S)$ yields  a commutative diagram
\[
\begin{tikzcd}
\HH\times\CC\arrow[r, "\psi ' "] \arrow[d, "q"']&  \HH\times\CC \arrow[d, "q"]\\
S \arrow[r, "\psi"]& S
\end{tikzcd}
\]
where $q$ is the quotient map. It follows that $\psi' \Gamma =\Gamma \psi '$, that is, $\psi' \in \widetilde \sN_\Gamma$. Conversely, any $\psi '\in \widetilde \sN_\Gamma$ descends to $S$.

Consider then  the exact sequence 
$$ 1 \ra \Ga \ra  \widetilde \sN_\Gamma \ra \Aut (S) \ra 1,$$
 where  any lift $\psi'$ of $\psi$ defines, via conjugation, an automorphism of $\Ga$ which is well defined up to
inner conjugation. 

Take $x_0$ to be a base point on $S$ and consider the fundamental group
$\Ga = \pi_1(S, x_0)$: then, setting $y_0 : = \psi (x_0)$, we have  $\psi_*  : \pi_1(S, x_0) \ra \pi_1(S, y_0)$,
and choosing a path $\de$ from $x_0$ to $y_0$ we get a fixed isomorphism 
$$  \pi_1(S, y_0) \cong  \pi_1(S, x_0),$$
obtained via conjugation by $\de$ (we take $\de$ to be the trivial path if $y_0= x_0$)
and,  composing $\psi_*$ with this isomorphism, we get an automorphism of $\Ga$.

By changing the lift we can make the automorphism of $\Ga$ induced by conjugation equal to 
$$\psi_* : \pi_1(S, x_0) \ra \pi_1(S, y_0)$$
(see especially \cite[page 316]{topmethods}, where $H'$ is called  the orbifold fundamental group
associated to a properly discontinuous subgroup $H$ of $\Aut(S)$: if the action of $H$ is free, $H'$ 
is the fundamental group of $S/H$,  otherwise, if $x_0$ is a fixed point of $H$, it is the semidirect product
$\Ga \rtimes H$, where conjugation is given by the action of $H$ on $\Ga = \pi_1 (S, x_0)$,
where $\psi \mapsto \psi_*$).
 
This property defines  a  lift $\tilde\psi$ of $\psi$, such that, considering the isomorphism 
 $\psi_*$  indicated above  (depending on the choice of $\de$), 
and setting, for $\ga \in \Ga$,  $\ga' : =  \psi_*(\ga)$,
$$ \tilde\psi : \HH \times \CC \ra \HH \times \CC$$
 enjoys the following important property
\begin{equation}\label{adding-prime} \ga' \circ \tilde\psi = \tilde\psi \circ \ga
\end{equation}
(observe that  both left hand side and right hand side are lifts of $\psi$ which  take the same value on
the same base point $x_0'$ lying above $x_0$).

Write now
 $$ \ \tilde\psi (t,z) = (\tilde\psi_1 (t,z), \tilde\psi_2 (t,z))=  (\tilde\psi_1 (t), \tilde\psi_2 (t,z)),$$
where the last equality follows from Liouville's Theorem,  which also implies that for $\ga \in \Ga$
(which is a lift of the identity of $S$)
$$ \ga (t,z) = (\ga_1 (t), \ga_2 (t,z)).$$

Observe that  condition \eqref{adding-prime} 
$$ \ga' \circ \tilde\psi = \tilde\psi \circ \ga $$
 spells out as 

\begin{equation}\label{first-condition}  \tilde\psi_1 (\ga_1(t)) =   \ga'_1 (\tilde\psi_1 (t)) , \end{equation}
\begin{equation}\label{second-condition}   \tilde\psi_2 (\ga_1(t), \ga_2 (t,z))) = \ga'_2 ( \tilde\psi_1 (t),  \tilde\psi_2 (t,z))). \end{equation}

Conversely, any such map $\tilde\psi$ satisfying the above two equations descends to $S$.

Looking at elements $\ga$ in the subgroup $\pi_E < \Ga$, acting trivially on $\HH$ and by translations on $\CC$, we obtain from \eqref{adding-prime},
since  we have $ \ga (t,z) = (t , z + c_{\ga_2} ),$
 $$    \tilde\psi_2 (t , z + c_{\ga_2} ) =    \tilde\psi_2 (t,z) + c_{\ga'_2}. $$
 Hence $\partial \tilde\psi_2 (t,z)  / \partial z$ is $\pi_E$-periodic, thus constant as a function of $z$, and as usual  
 we can write 
 $$ \tilde\psi_2 (t,z) = \la z + \phi(t),$$
 and, for $t$ fixed,   $ \tilde\psi_2$ descends, as already claimed, to an automorphism of $E = \CC / \pi_E$.

Similarly, for a general $\ga$,  $\ga_2 (t,z)$ is an affine function of $z$, $\ga_2 (t,z) = \la_{\ga} z + c_{\ga} (t)$.

 Here $\la$ and $\la_\ga $ are roots of unity, of order dividing $4$ or $6$.

An important remark is now that, if $\psi \in \Aut(S)$, then automatically, by the canonicity of the
covering $B' \ra B = B' /G$, then $\psi$ lifts to $\psi' \in \Aut(S')$. Then, since 
$ \ \tilde\psi (t,z) =  (\tilde\psi_1 (t), \tilde\psi_2 (t,z)),$ $\psi'$ acts on the base of the covering, $B'$,
which has genus $\geq 2$. If we now assume that $\psi' \in \Aut_{\QQ}(S')$, then by Lefschetz
the action on $B'$ is the identity.

If  $\psi \in \Aut_{\QQ}(S)$, and the genus of $B$ is at least $2$, in particular 
if $f$ is a bundle,  then necessarily $\psi$ acts as the identity on $B$.
We can therefore assume that $\tilde\psi_1 (t,z) = t$, and $ \tilde\psi_2 (t,z) = \la z + \phi(t).$

Otherwise,  if $B$ has genus $1$, there is the possibility that  $\psi$ acts as a translation on $B$,
which must however preserve  the set of points $\sB_m$ whose fibre is multiple with a fixed multiplicity $m$.

Let $ r \in \{2,4,6\}$ such that $\Aut(E) = E \rtimes \mu_r$, and use the notation $\e , \la$ for an element of $\mu_r$.

Then  $$ \tilde\psi_2 (t,z) = \la z + \phi(t), \  \ga_2 (t,z) = \e_{\ga} z + c_{\ga} (t).$$

Observe  that, if we write 
\begin{equation} \label{product-aut}\tilde\psi(t,z) = (\tilde\psi_1 (t) , \la z + \phi (t)), \;\; \ga(t,z) = (\ga_1 (t),  \e_{\ga} z + c_{\ga} (t)),\end{equation}
and set,  for $\ga \in \Ga$,  $\ga' : = \psi_* (\ga)  $,
then   the conditions \eqref{first-condition} and \eqref{second-condition}
read out as:

$$  \ga'_1( \tilde\psi_1 (t)) = \tilde\psi_1 (\ga_1( t)) , \ \  \ga'_2(\la z + \phi (t)) =  \la \ga_2( z) + \phi (\ga_1( t)) .$$

Observing then  that,  for $\ga \in \Ga$, $ \ga  \mapsto \ga'$ is the effect of conjugating by $\tilde\psi$, we obtain, abbreviating $ 
\ga_2 (z) = \e  z + b$,
that 
\begin{equation}\label{conj}
\ga'_2(z) = \e z + \la b - \e \phi (\tilde\psi_1^{-1} (t)) + \phi (\ga_1  (\psi_1^{-1} (t))).
\end{equation}

Hence condition \eqref{second-condition} is equivalent, setting
\begin{equation}\label{Udef}  U(t) : = \phi (\ga_1( t))  -  \e \phi ( t) ,
\end{equation}  to:
\begin{equation}\label{U}  U(t) = U (\tilde\psi_1(t)) .
\end{equation}

This formula is particularly interesting in the  case   where $\e = 1\,\; \forall\, \ga$,
in particular if we have a principal bundle.

\begin{remark}
In the case where $\Aut_{\QQ}(S)$ acts trivially on $B$, as it happens for an elliptic bundle,
then the condition that $\psi$ normalizes $\Ga$ can be simplified, since  $\tilde\psi_1(t) = t$,
 whence $\ga'(t) = \ga(t)$ and boils down to 
 $$ \ga' (z) = \e z + \la b - \e \phi (t) + \phi (\ga  (t)) \Leftrightarrow \e_{\ga} = \e_{\ga'} = \e , \ c_{\ga'}(t) = \la c_{\ga}(t) - \e \phi (t) + \phi (\ga  (t)).$$

\end{remark}

\section{The K\"ahler non-algebraic quasi-bundles}

If $ S \ra B$ is a K\"ahler non-algebraic quasi-bundle
 we  have, as already seen,  $b \geq 1$, $b' \geq 2$, and the principal bundle $S' \ra B'$ is topologically trivial, see Theorem \ref{bundle}.

There is a finite group $G$ acting on $S', B'$, with  $ S' = S / G$, hence $\Aut_{\QQ}(S) < \Aut(S)$ 
corresponds to  the subgroup of the normalizer of $G$ in   $\Aut(S')$ which acts as the identity on 
$H^*(S, \QQ)= H^*(S', \QQ)^G$.

\subsection{The case $S=S'$}
Here $S$ is a topologically trivial bundle, $b \geq 2$, $\Aut_{\QQ}(S)$ acts trivially on the base, and it must act
on the fibres $E$ via translations, since $H^1(S, \QQ) = H^1(B, \QQ)\oplus H^1(E, \QQ)$
and $\tilde{\psi} (t,z) = ( t, \la z + \phi (t))$, hence $\la $ yields the direct summand $End ( H^1(E, \QQ))$  of $End (H^1(S, \QQ))$.

We have then $\la= \e_\ga=1$, and our previous formulae specialize to 
$$ (t,z) \mapsto (t, z + \phi (t)),$$
and the  condition to be in the normalizer boils down to  $$ \ c_{\ga'}(t) =  c_{\ga}(t)  + \phi (\ga  (t)) -  \phi (t).$$

The easy remark is that, if $\ga' = \ga \ \forall \ga$, then  $\phi (\ga  (t)) =  \phi (t)$ and $\phi$ is a holomorphic function on $\HH/ \Ga$, hence it is constant. 

We want to reach the same conclusion in   the more general case that 
there is a $\ga$ with $\ga' \neq  \ga $.

The bundle homotopy sequence
$$ 1 \ra \pi_E \ra \pi_1(S)= \Ga  \ra \pi_1(B) \ra 1$$
 is here a split sequence.

Hence $\Ga = \pi_E \times \pi_1(B)$, and we can analyse separately the automorphisms in each factor,
those with
$ \ga(t,z) = (t, z + \ga_2)$,  $\ga_2 \in \pi_E$, which commute with $\tilde{\psi}$,
and those with $ \ga(t,z) = (\ga_1(t), z + c_\ga(t))$. 

Here $\phi (\ga  (t)) -  \phi (t) = c_{\ga'}(t) -  c_{\ga}(t) $ is an element  in $\pi_E$,
thus it is a constant $a_\ga$,  
hence the derivative $\phi'(t)$ satisfies $\phi'(\ga(t)) \ga'(t) = \phi'(t)$ and yields 
 a holomorphic section $\om : =  \phi'(t) dt$ of the  canonical bundle of $B$.

Our formulae say that the periods $d_\ga$ of this 1-form $\om$  lie in the subgroup $\pi_E$.

Now, the first homology sequence 
$$ 0 \ra \pi_E \ra H_1(S, \ZZ) \ra H_1(B, \ZZ) \ra 0,$$
which is also split, says that $\psi$ acts trivially on the first and last term of this sequence, 
and the action yields a homomorphism $\psi_* \colon H_1(B, \ZZ) \ra \pi_E$
such that $\psi_* (\ga) = d_\ga$.

We have found that $\phi$ defines a holomorphic map $\Psi \colon B \ra E$, and 
$\om$ is the pull-back of the generator of $H^0( \Omega^1_E)$.

If  the periods of $\om$ are trivial,  then we  can conclude that $\om =0$, hence $\phi$ is constant, 
hence  $ \ c_{\ga'}(t) =  c_{\ga}(t) $,  equivalently $\ga' (t,z) = \ga (t,z)$.

That this is so follows from the fact that the action of $\psi$ on the rational cohomology is trivial,
since we have found that  $\Aut_{\QQ}(S)  $ is isomorphic to the space of holomorphic
maps  $\Psi \colon B \ra E$, which maps to zero inside  $ Hom (H_1(B, \ZZ) ,  
H_1(E, \ZZ)).$ Then the dual map $H^ 1(E, \QQ) \ra H^ 1(B, \QQ) $ is also zero,
hence by the Hodge decomposition $\om=0$, hence  $\phi$ is constant.

We have then

\begin{thm}\label{principal}
If $S \ra B$ is a K\"ahler principal bundle with fibre $E$ and with $b \geq 2$, then 
$\Aut_{\QQ}(S) \cong \Aut^0(S) \cong E$.
\end{thm}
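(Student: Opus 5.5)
Given the preceding discussion, the plan is to assemble the pieces already established for the case $S=S'$ into a clean statement. First, $\Aut^0(S)\cong E$: the fibre translations $(t,z)\mapsto(t,z+c)$ with $c\in\CC$ commute with every $\ga\in\Ga$. In the principal case all $\e_\ga=1$, so $\ga(t,z)=(\ga_1(t),z+c_\ga(t))$. These translations therefore descend to $S$, and $c\in\pi_E$ acts trivially, giving an embedding $E\hookrightarrow\Aut(S)$. Conversely, $\Aut^0(S)$ acts trivially on cohomology, so it lies in $\Aut_\QQ(S)$. Hence it suffices to show $\Aut_\QQ(S)\subseteq E$; this gives both isomorphisms at once, since $E$ is connected and then $E\subseteq\Aut^0(S)\subseteq\Aut_\QQ(S)\subseteq E$.

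Take $\psi\in\Aut_\QQ(S)$. Since $\psi$ acts trivially on $H^1(B,\QQ)\subset H^1(S,\QQ)$ and $b\ge2$, the Lefschetz argument forces $\psi$ to act as the identity on $B$. Thus the distinguished lift has the form $\tilde\psi(t,z)=(t,\la z+\phi(t))$. Topological triviality of $S$ (Theorem \ref{bundle}, Kähler case) gives $H^1(S,\QQ)=H^1(B,\QQ)\oplus H^1(E,\QQ)$. Under this splitting $\la$ acts on the summand $H^1(E,\QQ)$ by multiplication, so triviality forces $\la=1$. The normalizer condition \eqref{U} then reduces to $c_{\ga'}(t)-c_\ga(t)=\phi(\ga_1(t))-\phi(t)$. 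For $\ga$ in the $\pi_1(B)$-factor of the split group $\Ga=\pi_E\times\pi_1(B)$, the left side lies in $\pi_E$ and is locally constant, hence a constant $a_\ga$.

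Differentiating, $\om=\phi'(t)\,dt$ is $\Ga$-invariant and so defines a holomorphic $1$-form on $B$ with periods $a_\ga\in\pi_E$. Equivalently, $\phi$ descends to a holomorphic map $\Psi\colon B\to E$ with $\Psi^*(dz)=\om$. I will check that the map $\ga\mapsto a_\ga$ is precisely the action of $\psi_*-\id$ on $H_1(S,\ZZ)$, viewed as a homomorphism $H_1(B,\ZZ)\to\pi_E$ through the split sequence $0\to\pi_E\to H_1(S,\ZZ)\to H_1(B,\ZZ)\to0$. This identification is the step needing the most care: one must keep track of the base-point and path conventions in \eqref{adding-prime}, so that $\ga'\ga^{-1}$ really equals the translation by $a_\ga$. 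Granting it, numerical triviality makes this homomorphism zero after $\otimes\QQ$, and hence zero, since $\pi_E$ is torsion free. So all periods of $\om$ vanish, and by Hodge theory on $B$ a holomorphic $1$-form with zero periods is zero. Hence $\phi$ is constant, and $\psi$ is a fibre translation, i.e. $\psi\in E$.

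Finally, distinct constants modulo $\pi_E$ give distinct automorphisms. This yields $\Aut_\QQ(S)=\Aut^0(S)\cong E$, and in fact also $\Aut_\ZZ(S)=E$.
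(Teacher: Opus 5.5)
Your proposal is correct and follows essentially the same route as the paper: Lefschetz on $B$, then $\la=1$ from the splitting $H^1(S,\QQ)=H^1(B,\QQ)\oplus H^1(E,\QQ)$, then the $\Ga$-invariant form $\om=\phi'(t)\,dt$ whose periods $a_\ga\in\pi_E$ you identify with the action of $\psi$ on $H_1(S,\ZZ)$, and finally Hodge theory to get $\om=0$. One small slip: the normalizer condition you actually use is the one in the Remark following \eqref{U} (with $\la=\e=1$), not \eqref{U} itself, which becomes vacuous once $\tilde\psi_1=\id$.
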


\bigskip
\subsection{The case $S= S'/G$}
$\Aut(S)$ lifts to $S'$, as $N_G / G$, where $N_G$ is the normalizer of $G$ inside  $\Aut(S')$.

We continue to assume that $S'$ is a K\"ahler Principal Bundle with fibre $E$, and we record 
the following chain of  inclusions 
$$ \Aut(S') \supset \Aut_{B'}(S') \supset Mor(B', E) \supset E,$$
where the second term is the subgroup acting trivially on the base $B'$,
and the  third  term consists of automorphisms with $(t,z) \mapsto (t, z + \phi(t))$.

The first issue is the action on the base $B$: it is trivial if $b \geq 2$, and if $b=1$ then it acts 
via a group $T$ of translations,
which must permute the set of critical values (points over which we have a multiple fibre).

By our previous observations, if $b=1$, then $q(S)=2, p_g(S)=1$, and by relative duality
$f_* (\Omega^1_{S|B})$ is trivial, hence  $G$ acts via  translations on $E$, and $G$ is abelian.

Hence we have again a split  exact sequence 
$$ 0 \ra H_1(E, \QQ)  \ra H_1(S, \QQ) \ra H_1(B, \QQ) \ra 0.$$

Moreover, by Lemma 2.4 of \cite{CFGLS24}, the group $\Aut_{\ZZ} (S)$ has a subgroup $H_B$ of index at most $2$
which acts trivially on the base $B$, hence we shall consider    first automorphisms in $\Aut_{B}(S)$.

For these, we take  lifts  in $\Aut_{B'}(S')$, hence with $\psi$ acting via $ (t,z) \mapsto (t, \la z + \phi(t))$,
$\la \in \mu_r$. Arguing as in the case $S=S'$, we find that $\la=1$, hence we get again a  morphism $B\ra E$,
and as in the proof of Theorem \ref{principal} we see that  it must  be constant.

The rational cohomology of $S$, $ H^*(S, \QQ)$, equals $ H^*(S', \QQ)^G$, and since $S'$ is topologically a product,
and $G$ acts on $E$ via translations
$$ H^*(S', \QQ)^G =  [H^*(B', \QQ) \otimes  H^*(E, \QQ)]^G = H^*(B', \QQ)^G \otimes  H^*(E, \QQ) = H^*(B, \QQ) \otimes  H^*(E, \QQ) .$$

This shows that $ \Aut_{\QQ}(S)$ consists of the automorphisms which act on $B$ via translations, and also act
on the fibre $E$ via translations.

We first define the maximal group $\sT$ of allowable translations on $B$: we have the monodromy homomorphism
$ \nu \colon \pi_1 (B^*) \ra G$, and letting $\sB$ be  the branch locus of $ B' \ra B= B' / G$,
we partition $\sB = \cup \sB_g$ according to the local monodromy $g (p)$ at the point $ p \in \sB$.

We define $\sT$ as the maximal group of translations such that each $\sB_g$ is a union of $\sT$-orbits.

Hence the image $T$ of $\Aut_{\QQ}(S)$ is a subgroup of $\sT$.

Observe that the group $\sT$ is a finite group, and recall that the subgroup $\Aut_{B, \QQ}(S)$ of $\Aut_{\QQ}(S)$
acting trivially
on the base $B$ consists of transformations where  $\phi(t)$ is constant.

Now, let $\psi_1$ be a lift of an element of $\sT$: then it suffices to decide whether there is $\phi(t)$
such that $(\tilde{\psi}_1 (t) , z + \phi(t))$ induces an automorphism of $S$, that is, 
$$  \phi(\ga t) - \phi(t) =  \phi ( \ga \tilde{\psi}_1 (t) ) - \phi (\tilde{\psi}_1 (t)  ), \ \forall \ga.$$

   
  We see immediately that this equation has as obvious solution: the one where $\phi(t) $ is constant.

Hence

\begin{thm}[Theorem \ref{thm:quasi}]
\label{thm:quasi'}
If $S \ra B$ is a K\"ahler non-algebraic quasi-bundle with fibre $E$, 
which is a quotient 
 $S = S'/G$ of a principal elliptic bundle $ S' \colon \ra B'$ with fibre $E$, then: 
  
1) $\Aut^0(S) \cong E$

2)  $\Aut^0(S)  < \Aut_{\QQ}(S)$ is the subgroup of automorphisms acting trivially
on the base $B$, and we have equality if the genus $b$ of $B$ is at least $2$.

If  the  base curve $B$ has genus $1$,
 then $G$ is abelian, and acts on the fibre $E$
 via translations. Under this assumption,

3) $ \Aut_{\ZZ}(S) / \Aut^0(S) $ has order at most $2$.

4) $ \Aut_{\QQ}(S) / \Aut^0(S) \cong \sT,$ where  $\sT$ is the maximal group of  translations on $B$
which leave the monodromy of the covering $B' \ra B = B'/G$ invariant (hence the ones which lift to automorphisms of $B'$).

5) In particular, for the class of surfaces with $b=1$, which have $p_g(S)=1, \chi (S)=0, K^2_S=0, $ there is no 
absolute upper bound for $|\Aut_{\QQ}(S) / \Aut^0(S) |$, but there is an upper bound $|\Aut_{\QQ}(S) / \Aut^0(S) | \leq P_2(S).$ 
\end{thm}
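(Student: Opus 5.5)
We assemble the statement from the discussion of Section 6, treating the five items in order. We use the lift of every automorphism to $S'$: since $B'\ra B$ is canonical, $\Aut(S)=N_G/G$ with $N_G$ the normalizer of $G$ in $\Aut(S')$.

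For 1) and 2), an element of $\Aut^0(S)$ acts trivially on $B$ when $b\ge 1$, because $\Aut^0(B)$ acts by translations and must preserve the finitely many multiple-fibre points. Its lift has the form $(t,z)\mapsto(t,\la z+\phi(t))$. The argument of the case $S=S'$ then applies: the action on $H^1(E,\QQ)\subset H^1(S,\QQ)$ forces $\la=1$; $\phi'(t)\,dt$ is a $\Ga$-invariant holomorphic 1-form whose periods lie in $\pi_E$; and numerical triviality kills the induced map $H_1(B',\ZZ)\ra H_1(E,\ZZ)$, so $\phi$ is constant by Hodge decomposition. Hence $\Aut_{B,\QQ}(S)$ consists of translations $z\mapsto z+c$. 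These commute with $G$, which acts on $E$ by translations (when $b=1$, via triviality of $f_*\Omega^1_{S|B}$), so they descend and $\Aut_{B,\QQ}(S)\cong E$. It is connected, so it equals $\Aut^0(S)$. If $b\ge2$, Lefschetz on $H^1(B,\QQ)$ gives trivial action on $B$, whence equality $\Aut_\QQ(S)=\Aut^0(S)$.

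For 3), apply Lemma 2.4 of \cite{CFGLS24}: $\Aut_\ZZ(S)$ has a subgroup of index at most $2$ acting trivially on $B$, and by 2) this subgroup lies in $\Aut^0(S)$.

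For 4), an element of $\Aut_\QQ(S)$ acts on $B$ by a translation (trivial on $H^1(B)$), and it must preserve the branch data of $B'\ra B$, so the image $T$ lies in $\sT$. Conversely, given $\tau\in\sT$, lift it to $\tilde\psi_1$ on $\HH$ normalizing the orbifold group. The map $(\tilde\psi_1(t),z)$, with $\phi$ constant, satisfies condition \eqref{U} trivially, hence descends. It acts trivially on $H^*(S,\QQ)=H^*(B,\QQ)\otimes H^*(E,\QQ)$ because it is a translation on each factor. The kernel of $\Aut_\QQ(S)\ra\sT$ is $\Aut^0(S)$ by 2), which gives the isomorphism.

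The main subtlety is ensuring the lift of $\tau$ actually normalizes $\Ga$ and not just the image of $\Ga$ in $\Aut(\HH)$. We handle this by taking the lift in $\Aut(B')$, which exists by definition of $\sT$ as the translations preserving the monodromy, so that $\tau$ lifts to the covering. We then use the product structure of $S'$ (topologically trivial principal bundle, constant transition functions up to $H^1(B',\hol)$): the map $(t,z)\mapsto(\tilde\psi_1 t,z)$ must be checked against the cocycle $c_\ga(t)$. We first try to arrange that the cocycle can be chosen locally constant and $\sT$-invariant, and otherwise absorb the discrepancy into a nonconstant $\phi$ solving $\phi(\ga t)-\phi(t)=\phi(\ga\tilde\psi_1 t)-\phi(\tilde\psi_1 t)$.

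For 5), Kodaira's formula with $b=1$ and $\chi=0$ gives $K_S=\sum(m_i-1)F'_i$, so $p_g=1$, $K^2=0$, and $P_2=h^0(\sum 2(m_i-1)F_i')$, which equals the number of branch points counted appropriately. The group $\sT$ acts freely on the branch set $\sB$, so $|\sT|\le\#\sB\le P_2(S)$. Unboundedness comes from taking $B'\ra B$ branched over a full orbit of a large translation group $\ZZ/N$ with equal local monodromies.
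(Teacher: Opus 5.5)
Your treatment of 1), 2), 3) and 5) follows the paper's route. You lift to $S'$, force $\la=1$ and kill $\phi$ via the induced map to $H_1(E)$, quote Lemma 2.4 of \cite{CFGLS24}, and combine the canonical bundle formula with the fact that $|\sT|$ divides $|\sB|$.

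The genuine gap is the surjectivity of $\Aut_\QQ(S)\to\sT$ in 4). You correctly notice that a lift $(\tilde\psi_1(t),z+\phi(t))$ of $\tau\in\sT$ has to be checked against the cocycle $c_\ga(t)$. Indeed, condition \eqref{U} does not involve $c_\ga$, and neither does the paper's displayed equation for $\phi$, of which the paper only remarks that constant $\phi$ solves it. Neither can be the full normalizing condition: for $\tilde\psi_1=\id$ both are vacuous, yet not every $\phi$ then gives an automorphism. However, you only list two strategies without carrying either out, and neither works in general.

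Concretely, topological triviality lets you make the cocycle flat: $S=(\HH\times E)/\pi_1^{orb}(f)$, with $\ga$ acting by $z\mapsto z+\rho(\ga)$ for a homomorphism $\rho\colon\pi_1^{orb}(f)\to E$. Then $(\tilde\psi_1,z+\phi)$ descends iff $\phi(\ga t)-\phi(t)\equiv\rho(\tau_*\ga)-\rho(\ga)\pmod{\Lambda}$ for all $\ga$, where $\tau_*$ is conjugation by $\tilde\psi_1$. This forces $d\phi=a\,dt$. So the requirement is that $h:=\rho\circ\tau_*-\rho$ coincide mod $\Lambda$ with $\ell\mapsto a\ell$ on $H_1(B,\ZZ)$ for some $a\in\CC$.

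But $h$ of a loop is, up to sign, the sum of the local monodromies $\rho(\ga_j)$ over the branch points swept when the loop is translated by $\tau$. Preserving the sets $\sB_g$, or even the monodromy $\nu$ of $B'\to B$, does not make this vanish. For example, take $\tau$ the translation by the half period $\frac12$, and $\sB=\{p_1,p_1+\frac12,p_3,p_3+\frac12\}$, all of multiplicity $2$. Set $\rho(\ga_j)=e_1,e_1,e_2,e_2$ and $\nu(\ga_j)=1\in\ZZ/2$, with $p_1,p_3$ in the strip swept by the vertical loop $\be$. Then $h(\al)=0$ and $h(\be)=e_1+e_2\neq0$, while $\nu\circ\tau_*=\nu$. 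For general moduli of $B$ and $E$ no $a$ exists. So 4) needs an extra argument or hypothesis, for instance that $\tau$ preserve the class of $S$ and not only the branch data. The paper's one-line remark about constant $\phi$ cannot fill it.

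The same computation breaks your unboundedness construction in 5). A single $\ZZ/N$-orbit with equal local monodromy $e$ gives $h(\be)=e\neq0$, so the generator does not lift. Instead use, e.g., two orbits with opposite local monodromies $e,-e$, placed at equal real parts so they are swept simultaneously. Then $h=0$ and constant $\phi$ does work.

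Finally, in 1)–2) you use that $G$ acts on $E$ by translations, but you justify this only for $b=1$. For $b\geq2$ it is not automatic. For a non-principal bundle with fibrewise monodromy $-1$, the group $H^1(E,\QQ)$ does not sit inside $H^1(S,\QQ)$, and fibre translations do not descend. The paper does not supply this step either.
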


\begin{proof}
There remains only to prove part 5).

First of all, we have seen that $|\Aut_{\QQ}(S) / \Aut^0(S) | = | \sT|$.

Now, the cardinality of the branch set $\sB$ is divisible by $|\sT|$, and by Kodaira's canonical bundle formula,
since $L$ has degree $0$, and $ P_2(S) = h^0 (2 K_S )$, 
$$ P_2(S) = h^0 (f^*(2K_B+2 L )+  \sum_{1\leq i\leq s} (m_i + (m_i-2)) F'_i) \geq h^0 (\hol_B(2L +   \sum_{p_i \in \sB} p_i)) =| \sB|.$$
\end{proof}

{\gr
\section{Non K\"ahler properly elliptic bundles and quasi-bundles.}

In this section we assume that $f \colon S \ra B$ is an elliptic quasi-bundle, which is not K\"ahler,
hence $b_1(S)$ is odd.

There is a Galois covering $ B' \ra B = B'/G$ such $S = S' / G$ and $ S' \ra B'$ is
an elliptic bundle.

$B'$ has genus $b' \geq 2$ if the Kodaira dimension of $S$ is $1$, see Theorem \ref{bundle}
and the arguments of Remark \ref{b'geq2}.

Passing to a further Galois  unramified covering of the base, we may assume that 
$ S' \ra B'$ is a principal bundle, therefore with $c(\xi)\neq 0$, and then $b_1(S') = b_1(B')  +1$.

\subsection{The case of a principal bundle} Here we assume that $f \colon S \ra B$ is an elliptic bundle.

Then $Aut_{\QQ}(S)$ acts trivially on the base curve $B$, which has genus $b \geq 2$.

The bundle homotopy sequence
$$ 1 \ra \pi_E \ra \pi_1(S)= \Ga  \ra \pi_1(B) \ra 1$$
 is here a  non-split sequence,and yields an exact sequence
$$ H_1(E, \QQ) \ra H_1(S, \QQ)  \ra H_1(B, \QQ)  \ra 0,$$
and since $b_1(S) = b_1(B')  +1$ the image of $H_1(E, \QQ) \ra H_1(S, \QQ) $
has rank $1$, therefore we conclude that $Aut_{\QQ}(S)$ acts on the fibres via translations.

Because, if $\la : H_1(E, \QQ) \ra H_1(E, \QQ)$ leaves a rank 1 subspace  invariant, and acts as the identity 
on the quotient, then it has one eigenvalue equal to $1$, whence $\la =1$.

Again we can find a holomorphic map $\Psi : B \ra E$, but to conclude that it is constant results harder
in this case.

Also, the case of primary  Kodaira surfaces suggests that $Aut_{\QQ}(S)$ may be larger than $ E = Aut^0(S)$.

}
\bigskip
\subsection*{Acknowledgements} 

We would like to thank Wenfei Liu for useful discussions  at the onset of this project, and  
especially Andrea Cattaneo for
several email exchanges on the automorphisms of Kodaira surfaces.


\begin{thebibliography}{99}

\bibitem[BP83]{BP83}
Wolf P. Barth and Chris Peters, Automorphisms of Enriques surfaces, Invent. Math. 73 (1983), no. 3, 383--411.


\bibitem[BPV84]{bpv}
Wolf P. Barth, Chris Peters, Antonius Van de Ven,
Compact complex surfaces. 
Ergebnisse der Mathematik und ihrer Grenzgebiete. 3. Folge. Band 4. Berlin etc.: Springer-Verlag. X, 304 p. (1984).

\bibitem[Bog76]{bogomolov}
Fedya Bogomolov, 
Classification of surfaces of class $VII_0$ with $b_2 = 0$, Math. USSR Izv 10, 255--269 (1976).



\bibitem[BH75]{BH} 
Enrico Bombieri, Dale Husem\"oller,
Classification and embeddings of surfaces, in `Algebraic Geometry', Arcata 1974, A.M.S. Proc. Symp. 
Pure Math. 29 (1975), 329-420.

\bibitem[BM77]{bo-mu2} 
Enrico Bombieri, David Mumford, 
Enriques' classification of surfaces in char.  p . II.
Iwanami Shoten Publishers, Tokyo, 1977, pp. 23--42.



\bibitem[BM46]{bm1}
Salomon Bochner and Deane Montgomery, Locally compact groups of differentiable transformations. Ann. of Math. (2) 47 (1946), 639--653.
\bibitem[BM47]{bm2}
Salomon Bochner and Deane Montgomery, Groups on analytic manifolds. Ann. of Math. (2) 48 (1947), 659--669.




\bibitem[BR75]{br}
Dan Burns and Michael Rapoport,  On the Torelli problem for k\"ahlerian K3 surfaces. Ann. Sci. \'Ecole Norm. Sup. (4) 8 (1975), no. 2, 235--273.




\bibitem[Cai04]{Cai04}
Jin-Xing Cai, Automorphisms of a surface of general type acting trivially in cohomology. Tohoku Math. J. (2) 56 (2004), no. 3, 341--355.
\bibitem[Cai09]{Cai09}
Jin-Xing Cai,
Automorphisms of elliptic surfaces, inducing the identity in cohomology, Journal of Algebra 322 (2009) 4228--4246.
\bibitem[CLZ13]{clz}
Jin-Xing Cai, Wenfei Liu, Lei Zhang,  Automorphisms of surfaces of general type with $q\geq 2$ acting trivially in cohomology. Compos. Math. 149 (2013), no. 10, 1667--1684.

\bibitem[CL18]{CL18}
Jin-Xing Cai and Wenfei Liu, Automorphisms of surfaces of general type with $q=1$ acting trivially in cohomology. Ann. Sc. Norm. Super. Pisa Cl. Sci. (5) 18 (2018), no. 4, 1311--1348.




\bibitem[Cat03]{barlotti}
Fabrizio Catanese, Fibred K\"ahler and quasi-projective groups. Special issue dedicated to Adriano Barlotti. Adv. Geom. (2003), suppl., S13--S27.


\bibitem[Cat08]{cime}
Fabrizio Catanese, Differentiable and deformation type of algebraic surfaces, real and symplectic structures. Symplectic 4-manifolds and algebraic surfaces, 55--167, Lecture Notes in Math., 1938, Springer, Berlin, 2008.

\bibitem[Cat85]{gnsaga}
Fabrizio Catanese, Superficie complesse compatte, 
Atti del Convegno Nazionale del GNSAGA del CNR, Torino 4--6 Ottobre 1984,
Valetto, Torino (1985) 1-58 (available also through UMI, Italian Mathematical Union).


\bibitem[Cat15]{topmethods}
Fabrizio Catanese, Topological methods in moduli theory. 
Bull. Math. Sci. 5, No. 3, 287--449 (2015).

\bibitem[CFGLS24]{CFGLS24}
Fabrizio  Catanese, Davide Frapporti, Christian Glei\ss ner, Wenfei Liu, and Matthias Sch\"utt, On the cohomologically trivial automorphisms of  elliptic surfaces  I: $\chi(S)=0$, 
Taiwanese J. Math. 29, No. 6,  volume in honour of Y. Prokhorov,  1209--1260 (2025).
 
 \bibitem[CF26]{CF26}
Fabrizio  Catanese, Davide Frapporti,
 Cohomologically or numerically trivial automorphisms of surfaces of general type,
arXiv:2601.18389.  

\bibitem[CL21]{CatLiu21}
Fabrizio  Catanese and Wenfei Liu, On topologically trivial automorphisms of compact K\"ahler manifolds and algebraic surfaces, Rend. Lincei Mat. Appl. 32 (2021), 181--211.

\bibitem[CLS24]{CLS24}
Fabrizio  Catanese,  Wenfei Liu, and Matthias Sch\"utt, On the numerically and cohomologically trivial automorphisms of elliptic surfaces II: $\chi(S)>0$, 
arXiv:2412.17033, to appear in the journal  `Algebraic Geometry', in 2027.

\bibitem[Cat26]{cattaneo}
Andrea Cattaneo,
A note on the automorphism group of Kodaira surfaces, 
 preprint (2026),
arXiv: 2304.09429v4.









\bibitem[Dolg81]{dolg-cime}
Igor V.\ Dolgachev, Algebraic Surfaces with $q = p_g=0$, in C.I.M.E. III 1977, `Algebraic Surfaces', G. Tomassini, editor, 
97-216, Liguori Editore, Napoli (1981) (reedited as Springer Lect. Notes math. CIME 76, (2010)).


\bibitem[DM22]{DM22}
Igor V.\ Dolgachev, Gebhard Martin, Automorphism groups of rational elliptic and quasi-elliptic surfaces in all characteristics, Adv. in Math. 400 (2022) 108274.

\bibitem[Do83]{donaldson} Simon Kirwan Donaldson,
An application of gauge theory to four-dimensional topology,
Jour. Diff. Geometry 18(2), (1983).


\bibitem[DL23]{DL23}
Jiabin Du and Wenfei Liu, On symplectic automorphisms of elliptic surfaces acting on CH$_0$. Sci. China Math. 66 (2023), 443--456. 



\bibitem[FM94]{FM94}
Robert Friedman and John W. Morgan, Smooth four-manifolds and complex surfaces, Ergebnisse der Mathematik und ihrer Grenzgebiete, 3. Folge, Band 27, Springer-Verlag, 1994.

\bibitem[Fuj78]{Fuj78}
Akira Fujiki,  On automorphism groups of compact K\"ahler manifolds. Invent. Math. 44 (1978), no. 3, 225--258.

\bibitem[Gra74]{Grauert74}
Hans Grauert. Der Satz von Kuranishi f\"ur kompakte komplexe R\"aume. 
Invent.
Math., 25 (1074), 107--142.

\bibitem[Inoue74]{inoue}
Masahisa  Inoue, 
On surfaces of class $VII_0$,  Inventiones math., 24 (1974), 269--310.
 
 
 
 \bibitem[Klei80]{kleiman}
 Steven L. Kleiman,  
 Relative duality for quasi-coherent sheaves,
Compositio Mathematica, Volume 41 (1980) no. 1,  39--60.



\bibitem[Kod60]{kodaira1}
Kunihiko Kodaira, On compact analytic surfaces. I. 
Ann. Math. (2) 71  (1960) 111--152.

\bibitem[Kod63]{kodaira2-3}
Kunihiko Kodaira, On compact analytic surfaces. II-III. 
Ann. Math. (2) 77  (1963) 563--626, 78 (1963) 1--40.

\bibitem[Kod63-4]{kodstruct-1-2}
Kunihiko Kodaira, On the structure of compact complex analytic surfaces I-II,
Proc. Nat. Acad. Sci.U.S.A., 50 (1963), 218-221, and 51 (1964) , 1100-1104.

\bibitem[Kod64]{kodairastructure1}
Kunihiko Kodaira, On the structure of compact complex analytic surfaces. I. 
Am. J. Math.  86  (1964) 751--798.

\bibitem[Kod66]{kodairastructure2}
Kunihiko Kodaira, On the structure of compact complex analytic surfaces. II. 
Am. J. Math.  88  (1966) 682--721.

\bibitem[Kod68-3]{kodairastructure3}
Kunihiko Kodaira, On the structure of compact complex analytic surfaces. III. 
Am. J. Math.  90  (1968) 55--83.

\bibitem[Kod68-4]{kodairastructure4}
Kunihiko Kodaira, On the structure of compact complex analytic surfaces. IV. 
Am. J. Math.  86  (1968) 1048-1066.


\bibitem[Kon86]{Kon86} 
Shigeyuki Kond\=o, Enriques surfaces with finite automorphism groups, 
Japan. J. Math. (N.S.) 12 (1986), 191--282. 

 
 \bibitem[Kub76]{Kubert}
 Daniel S. Kubert,
 {Universal bounds on the torsion of elliptic curves},
 Proc. London Math. Soc. {33} (1976), 193--237.
 
 
\bibitem[Kur62]{Kuranishi62}
Masatake Kuranishi, On the locally complete families of complex analytic structures.
Ann.\ Math.\ 75 (1962), 536--577, 1962.

\bibitem[Kur64]{Kuranishi64}
Masatake Kuranishi, New proof for the existence of locally complete families of
complex structures. 
Proc. Conf. Complex Analysis, Minneapolis (1964), 142--154,
1965.

 

\bibitem[Lie78]{Lie78}
 David I. Lieberman,
Compactness of the Chow scheme: applications to automorphisms and deformations of K\"ahler manifolds. Fonctions de plusieurs variables complexes, III (S\'em. Francois Norguet, 1975--1977), pp. 140--186, 
Lecture Notes in Math., 670, Springer, Berlin, 1978. 









\bibitem[Mir89]{Mir89}\
Rick Miranda, The basic theory of elliptic surfaces, Notes of Lectures for Dottorato di Ricerca in Matematica, Universit\'a di Pisa, ETS Editrice Pisa (1989) I-III, 1-106.






\bibitem[Miya74]{miyaoka}
Yoichi Miyaoka, 
K\"ahler metrics on elliptic surfaces,
Proc. Japan Acad, Ser. A 50 (1974) 533.


\bibitem[Muk10]{Muk10}
Shigeru Mukai, Numerically trivial involutions of Kummer type of an Enriques surface, 
Kyoto J. Math. 50 (2010), 889--902. [Addendum available on Mukai's homepage.]
\bibitem[MN84]{MN84}
Shigeru Mukai and Yukihiko Namikawa, Automorphisms of Enriques surfaces which act trivially on the cohomology groups. Invent. Math. 77 (1984), no. 3, 383--397.





\bibitem[Pet79]{Pe79}
Chris A. M. Peters,  Holomorphic automorphisms of compact  K\"ahler surfaces and their induced actions in cohomology. Invent. Math. 52 (1979), no. 2, 143--148.


\bibitem[Pe80]{Pe80}
Chris A. M. Peters, On automorphisms of compact K\"ahler surfaces, Jour\'ees de G\'eometrie Alg\'ebrique d'Angers, Juillet 1979/Algebraic Geometry, Anger, 1979, 249--267, Sijthoff\,\&\,Noordhoff, Alphen aan den Rijn-Germantown, Md., 1980.

\bibitem[PS72]{PS72}
Ilia I. Pyatetskij-Shapiro,  Igor Rotislav Shafarevich, 
A Torelli theorem for algebraic surfaces of type K3. 
Math. USSR, Izv. 5(1971), 547--588 (1972).


\bibitem[SS19]{SS19} Matthias Sch\"utt and Tetsuji Shioda, Mordell--Weil Lattices, Ergebnisse der Mathematik und ihrer Grenzgebiete. 3. Folge, Vol 70, 2019.

\bibitem[Ser91]{Ser91} Fernando Serrano, The Picard group of a quasi-bundle, Manuscripta Math. 73 (1991), 63--82.



\bibitem[Ser96]{serrano}
 Fernando Serrano, Isotrivial fibred surfaces, Annali di Matematica pura ed applicata (IC), Vol CLXXI (1996), 63--81.


\bibitem[Shaf65]{steklov}
Igor Rotislav Shafarevich, et al: I. R.; Averbukh, B. G.; Vinberg, J. R.; Zhizhchenko, A. B.; Manin, Yu. I.; Moishezon, B. G.; Tyurina, G. N.; Tyurin, A. N.
Algebraic surfaces. 
Tr. Mat. Inst. Steklova 75, 215 p. (1965).




\bibitem[SZ01]{SZ01}
Ichiro Shimada and
De-Qi Zhang,  
Classification of extremal elliptic K3 surfaces and fundamental groups of open K3 surfaces. 
Nagoya Math. J. 161 (2001), 23--54. 



\bibitem[Shi72]{Shi72}
Tetsuji Shioda, On elliptic modular surfaces, J. Math. Soc. Japan
{24} (1972), 20--59. 


 \bibitem[Siu83]{siu}
 Yum Tong Siu,
 Every K3 surface is K\"ahler,
 Inv. Math.  73 (1983), 139-150.
 
 \bibitem[Tel94]{teleman}
 Andrei Teleman, 
 Projectively flat surfaces and Bogomolov's theorem on class $VII_0$-surfaces, Int. J. Math., Vol. 5, No 2, 253--264 (1994).
 
\end{thebibliography}
\end{document}